\newtheoremstyle{nonum}{}{}{\itshape}{}{\bfseries}{.}{ }{\thmnote{#3}}
\newtheorem{thm}{Theorem}[section]
\newtheorem*{lem*}{Lemma}
\newtheorem*{prop*}{Proposition}
\newtheorem*{thm*}{Theorem}
\newtheorem{cor}[thm]{Corollary}
\newtheorem{lem}[thm]{Lemma}
\newtheorem{prop}[thm]{Proposition}
\newtheorem{rem}[thm]{Remark}
\theoremstyle{definition}
\theoremstyle{nonum}
\newcommand{\R}{\mathbb R}
\newcommand{\C}{\mathbb C}
\newcommand{\CC}{\mathbb C}
\newcommand{\RR}{\mathbb C}
\newcommand{\FF}{\mathcal F}
\newcommand{\ZZ}{\mathbb Z}
\newcommand{\T}{T}
\newcommand{\B}{\mathcal B}
\newcommand{\F}{\mathbb F}
\newcommand{\supp}{\mbox{supp}}
\newcommand{\Supp}{\mbox{supp}}
\def\S{{\cal S}}
\def\alp{{\alpha}}
\def\RR{{\mathbb R}}
\def\pt{\partial}
\def\11{\mathds{1}}
\begin{document}

\title{On Multiplicative Maps of Continuous and Smooth Functions}
\author{Shiri Artstein-Avidan, Dmitry Faifman and Vitali Milman}
\date{}
\maketitle

\begin{abstract}
In this note, we study the general form of a multiplicative bijection on several families of functions defined on manifolds, both real or complex valued. In the real case, we prove that it is essentially defined by a composition with a diffeomorphism of the underlying manifold (with a bit more freedom in families of continuous functions). Our results in the real case are mostly simple extensions of known theorems. We then show that in the complex case, the only additional freedom allowed is complex conjugation.  Finally, we apply those results to characterize the Fourier transform between certain function spaces. 
\end{abstract}

\section{Introduction and main results}

The following is the simplest form of a lemma regarding
multiplicative maps. It is standard, and was used recently for
example in the paper \cite{AKM} where a characterization of the
derivative transform as an essentially unique bijection (up to
constant) from $C^1(\R)$ to $C(\R)$ which satisfies the chain rule
was derived.

\begin{lem}\label{lem-easy}
Assume that $K:\R \to \R$ is measurable, not identically zero and
satisfies for all $u,v\in \R$ that $K(uv)=K(u) K(v)$. Then there
exists some $p>0$ such that
\[K(u)=|u|^p\quad\mbox{ or }\quad K(u)=|u|^p sgn(u).\]
\end{lem}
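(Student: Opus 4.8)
The plan is to linearize the multiplicative equation $K(uv)=K(u)K(v)$ by passing to logarithms, reducing it to the additive Cauchy equation, and then to exploit measurability to discard the pathological additive solutions.

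First I would determine $K$ at the distinguished points $0,1,-1$ and its sign on the two half-lines. Setting $u=v=1$ gives $K(1)=K(1)^{2}$, so $K(1)\in\{0,1\}$; since $K\not\equiv 0$ and $K(u)=K(u)K(1)$, necessarily $K(1)=1$. Then $K(u)K(1/u)=K(1)=1$ shows $K(u)\neq 0$ for all $u\neq 0$, and $K(u)=K(\sqrt{u})^{2}>0$ for $u>0$, so $K$ is strictly positive on $(0,\infty)$. From $K(-1)^{2}=K(1)=1$ we get $K(-1)\in\{1,-1\}$, while $K(0)=K(0)^{2}$ forces $K(0)\in\{0,1\}$, the value $K(0)=1$ giving $K(0)=K(0)K(u)=K(u)$, i.e.\ $K\equiv 1$; discarding this degenerate solution we keep $K(0)=0$.

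Next I would restrict $K$ to $(0,\infty)$, where it is positive, and define $g(t)=\log K(e^{t})$ for $t\in\R$. Multiplicativity of $K$ translates into the additive Cauchy equation $g(s+t)=g(s)+g(t)$, and $g$ is measurable, being the composition of the measurable $K$ with the continuous maps $\exp$ and $\log$. The crux of the argument lies exactly here: I would invoke the classical theorem that every Lebesgue-measurable solution of Cauchy's equation is linear, so $g(t)=pt$ for a real constant $p$; if a self-contained treatment is wanted, one runs the standard short proof that a measurable additive function is locally bounded, hence continuous, hence linear. This is the one non-elementary ingredient, and measurability is precisely what rules out the Hamel-basis pathologies. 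It gives $K(u)=u^{p}$ for every $u>0$.

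Finally I would carry the formula to the negative axis. For $u<0$ we have $K(u)=K(-1)K(-u)=K(-1)\,|u|^{p}$, so the two admissible values $K(-1)=\pm 1$ produce exactly $K(u)=|u|^{p}$ and $K(u)=|u|^{p}\,\mathrm{sgn}(u)$, each consistent with $K(u)=u^{p}$ on $(0,\infty)$ and with $K(0)=0$. This recovers the two stated alternatives. I expect the only delicate point beyond the Cauchy step to be the range of the exponent: measurability alone yields $p\in\R\setminus\{0\}$, and the stated restriction $p>0$ is the one compatible with $K(0)=0$ in the natural (locally bounded) setting, the sign being the genuinely context-dependent part of the conclusion.
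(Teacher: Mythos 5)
The paper gives no proof of this lemma at all---it is quoted as ``standard''---so there is nothing to compare against line by line; your argument is the standard one, and it is essentially the same reduction the paper itself performs in Appendix~A for its jet-bundle lemmas: pass to $g(t)=\log K(e^{t})$, obtain Cauchy's additive equation, and invoke the Banach--Sierpi\'nski theorem (which the paper cites) to conclude that $g$ is linear. Your bookkeeping at $0$, $1$, $-1$ and the transfer to the negative half-line via $K(u)=K(-1)K(-u)$ are all correct. Your closing caveat is also well taken and worth stating more forcefully: the hypotheses as written do \emph{not} force $p>0$. Indeed $K\equiv 1$, $K=\mathrm{sgn}$ (with $K(0)=0$), and $K(u)=|u|^{-1}$ for $u\neq 0$ with $K(0)=0$ are all measurable, not identically zero, and multiplicative, so the literal statement needs an additional nondegeneracy or regularity hypothesis (e.g.\ $K$ nonconstant and locally bounded near $0$) to pin down $p>0$. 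Note in particular that $p=0$ is not excluded by measurability alone---$g\equiv 0$ is a perfectly good linear solution of Cauchy's equation---so your parenthetical claim that measurability yields $p\in\R\setminus\{0\}$ is slightly off; but these are defects of the lemma as stated rather than of your proof.
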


When instead of $\R$ we have a more complicated set with a
multiplication operation, such as a class of functions, things
become more involved. This already became apparent in the papers
\cite{AAM} and \cite{AAFM}, where characterizations of the Fourier
transform were proved as a unique bijection between corresponding
classes of functions which maps products to convolutions.
Let us recall a result from the paper \cite{AAFM}. Here $\S = \S_\C(n)$
denotes the Schwartz space of infinitely smooth {\itshape rapidly
decreasing} functions $f\colon\RR^n\to\CC$, namely functions such
that for any $l\in \ZZ_+$ and any multi-index
$\alp=(\alp_1,\dots,\alp_n)$ of non-negative integers one has
$$\sup_{x\in \RR^n} \big|\frac{\pt^\alp
f(x)}{\pt x^\alp}(1+|x|^l)\big|<\infty$$ where as usual
$\frac{\pt^\alp f(x)}{\pt x^\alp}:=\frac{\pt^{|\alp|}f}{\pt
x_1^{\alp_1}\dots \pt x_n^{\alp_n}}$, $|\alp|:=\sum_{i=1}^n \alp_i$.

Let $\S'_\C(n)$ be the topological dual of $\S_\C(n)$.

\begin{thm}\label{thm:AAFM}[Alesker-Artstein-Faifman-Milman]
Assume we are given a bijective map $\T\colon \S_\C(n)\to \S_\C(n)$ which
admits an extension $\T '\colon \S'_\C(n)\to\S'_\C(n)$ and such that for
every $f \in \S_\C(n)$ and $g\in \S'_]\C(n)$ we have $\T_u (f \cdot g) = (\T
f)\cdot (\T_u g)$. Then there exists a $C^\infty$-diffeomorphism
$u\colon \RR^n\to\RR^n$ such that
\begin{eqnarray*}
\mbox{either } \T(f)=f\circ u \mbox{ for all } f\in \S_\C(n),\\
\mbox{or } \T (f)=\overline{f\circ u} \mbox{ for all }f\in \S_\C(n).
\end{eqnarray*}
\end{thm}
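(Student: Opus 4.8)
The plan is to recover the points of $\R^n$ from the purely multiplicative structure by realizing them as Dirac distributions, and then to read off the map $u$ and its action on values. First I would note that, taking $g\in\S_\C(n)\subset\S'_\C(n)$ in the hypothesis and using $\T'|_{\S_\C(n)}=\T$, the map $\T$ is itself multiplicative on $\S_\C(n)$, i.e. $\T(fg)=(\T f)(\T g)$; taking $g=1\in\S'_\C(n)$ gives $\T f=(\T f)\cdot(\T'1)$ for all $f$, and since $\T$ is onto we conclude $\T'1=1$ and, similarly, $\T0=0$ and $\T'0=0$. The conceptual engine is that for a point $x\in\R^n$ the Dirac distribution satisfies $f\cdot\delta_x=f(x)\,\delta_x$ for every $f\in\S_\C(n)$; conversely a distribution whose complex line is invariant under multiplication by every $f\in\S_\C(n)$ must be a nonzero multiple of some $\delta_x$ (derivatives of $\delta_x$, and distributions supported at more than one point, both fail this). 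Thus points of $\R^n$ are canonically the multiplicative eigenlines inside $\S'_\C(n)$.

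Next I would transport this picture through $\T'$. For $f\in I_x:=\{f:f(x)=0\}$ we have $f\cdot\delta_x=0$, so $(\T f)\cdot(\T'\delta_x)=\T'(f\cdot\delta_x)=\T'0=0$; hence $\T'\delta_x$ is supported in the common zero set $\bigcap_{f\in I_x}Z(\T f)$. I would argue that this common zero set is a single point $\phi(x)$, so that $\T'\delta_x=\mu(x)\,\delta_{\phi(x)}$ with $\mu(x)\neq0$, and that $\phi$ is a bijection of $\R^n$ (surjectivity and injectivity following from the symmetric statement for the multiplicative bijection $\T^{-1}$). Writing $\T'(c\,\delta_x)=\theta_x(c)\,\delta_{\phi(x)}$ and applying $\T'$ to $f\cdot\delta_x=f(x)\delta_x$, comparison of the coefficients of $\delta_{\phi(x)}$ yields, after normalizing so that $\theta_x(1)=1$, the pointwise transfer formula $(\T f)(\phi(x))=\theta_x(f(x))$. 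Substituting this into $\T(f_1f_2)=(\T f_1)(\T f_2)$ and evaluating at $\phi(x)$ shows that each $\theta_x\colon\C\to\C$ is a multiplicative bijection fixing $0$ and $1$.

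It then remains to identify $\theta_x$, to upgrade $\phi$ to a diffeomorphism, and to make the final dichotomy uniform. On real data one is in the situation of Lemma~\ref{lem-easy}, so on the reals $\theta_x(t)=|t|^p$ or $|t|^p\,\mathrm{sgn}(t)$; but since every $\T f$ must be \emph{smooth} and $\T$ must be onto, the exponent $p\neq1$ and the non-faithful sign choice are excluded (a factor $|t|^p\,\mathrm{sgn}(t)$ with $p\neq1$ destroys smoothness of $\T f$ at a zero of $f$, or destroys surjectivity), forcing $\theta_x(t)=t$ for $t\in\R$. From $\theta_x(i)^2=\theta_x(-1)=-1$ we get $\theta_x(i)=\pm i$, and continuity of $\theta_x$ on the unit circle (again inherited from smoothness of the images) gives $\theta_x=\mathrm{id}$ or $\theta_x=\overline{(\cdot)}$. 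That $u:=\phi^{-1}$ is a $C^\infty$-diffeomorphism follows because $\phi$ and $\phi^{-1}$ pull smooth functions back to smooth functions through the formula above applied to $\T$ and to $\T^{-1}$. Finally a connectedness argument in $x$, namely that the loci where $\theta_x=\mathrm{id}$ and where $\theta_x=\overline{(\cdot)}$ are both closed and a jump between them would break smoothness of some $\T f$, shows the choice is the same at every point, giving $\T f=f\circ u$ everywhere or $\T f=\overline{f\circ u}$ everywhere.

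The main obstacle I anticipate is the step from the bare \emph{multiplicative} (a priori discontinuous, nonlinear) maps $\theta_x$ to the conclusion that they are exactly the identity or conjugation: here one must genuinely exploit that the images lie in $\S_\C(n)$ and are smooth, not merely continuous, together with surjectivity of $\T$. A closely related secondary difficulty, which must be handled without assuming $\T'$ linear, is showing that the common zero set $\bigcap_{f\in I_x}Z(\T f)$ reduces to a single point so that $\T'$ sends each Dirac line bijectively to a Dirac line; since the multiplicative data alone does not obviously isolate one-point zero sets, I expect this to require the reduction to real-valued functions, where the order coming from squares ($f\geq0$ iff $f$ is a square) restores enough of the structure lost by the absence of additivity.
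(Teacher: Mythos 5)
First, a point of context: the paper does not prove Theorem \ref{thm:AAFM} at all --- it is imported from \cite{AAFM} --- and the body of the present paper is devoted to the stronger Theorem \ref{thm-main:ComplexC^k}, which drops the hypothesis on $\T'$ and is proved by entirely different means (the zero-set correspondence of Proposition \ref{prop-compact}, locality via jets, and the additive-operator lemmas of Appendix A). Your route, transporting Dirac lines by $\T'$ to obtain the pointwise formula $(\T f)(\phi(x))=\theta_x(f(x))$, is the natural way to exploit the extra hypothesis and is in the spirit of the cited source; its payoff is that locality ``of order zero'' (no dependence of $(\T f)(\phi(x))$ on derivatives of $f$ at $x$) comes for free from $f\cdot\delta_x=f(x)\delta_x$, whereas the paper has to fight for this through Whitney extension and the jet-bundle lemmas.

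That said, two steps you lean on are genuine gaps as written. (i) You need $\T'\delta_x=\mu(x)\,\delta_{\phi(x)}$ with $\mu(x)\neq 0$. Nothing in the hypotheses makes $\T'$ injective or linear, so $\T'\delta_x\neq 0$ is not free; and even granting that $\supp(\T'\delta_x)$ is a single point $y$, a distribution supported at $y$ is a finite sum $\sum c_\alpha\partial^\alpha\delta_y$, and you must kill the terms with $|\alpha|\geq 1$, since otherwise $(\T f)\cdot\T'\delta_x$ involves derivatives of $\T f$ at $y$ and your transfer formula fails. Showing that $\bigcap_{f\in I_x}Z(\T f)$ is a singleton and that no derivative-of-delta components survive is exactly where the divisibility and characteristic-sequence machinery of Section \ref{sec:zerosets} does its real work; your proposed fallback (``the order coming from squares'') is not yet an argument. (ii) A multiplicative bijection $\theta_x$ of $\C$ fixing $0$ and $1$ can be wildly discontinuous (every field automorphism of $\C$ is one), so before invoking Lemma \ref{lem-easy} you must establish measurability of $\theta_x$. ``Inherited from smoothness of the images'' does not supply this: smoothness of $y\mapsto(\T f)(\phi(y))=\theta_y(f(y))$ constrains $\theta$ only jointly in $(y,f(y))$, and for $n=1$ a single $f$ sweeps out merely a curve in $\C$, so one function never determines $\theta_x$ on an open set of values. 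The paper's Appendix A handles precisely this difficulty via the Banach--Sierpi\'nski theorem combined with sequences of bump functions accumulating at $x$, and some such argument is indispensable here as well. The remaining steps (excluding $|t|^p$ and $|t|^p\,\mathrm{sgn}(t)$ with $p\neq 1$ by smoothness and surjectivity, $\theta_x(i)=\pm i$, the connectedness argument globalizing the identity-versus-conjugation choice, and upgrading $u$ to a $C^\infty$-diffeomorphism) are sound in outline.
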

Thus, multiplicativity is valid only for transforms which are
essentially a ``change of variables''. One of the elements in the
proof was a lemma similar to those appearing in Appendix A below.

An obvious corollary of Theorem \ref{thm:AAFM}, which appeared in \cite{AAFM},
was a theorem characterizing Fourier transform
which
is denoted by $\FF$ and defined by
\[ (\FF f)(t) = \int_{\R} f(x) e^{-2\pi i  xt}d x.\]
It is
well known that Fourier transform exchanges pointwise product
on $\C$ with usual convolution, which is denoted by $f*g$; that is,
$\FF(f\cdot g) = \FF f * \FF g$ and vice versa.
The corollary of Theorem \ref{thm:AAFM} is that the Fourier transform is, up to
conjugation and up to a diffeomorphism,  the only one
 which maps product
to convolution among bijections  ${\cal F}
\colon \S \to \S$ which have an extension  ${\cal
F}' \colon \S'\to \S'$. It is not hard to check that if convolution
is also mapped back to product then the diffeomorphism $u$ above
must be the identity mapping, for details see \cite{AAFM}.

One of the main theorems in the present note is that
the assumption of the existence of ${\cal
F}' \colon \S'\to \S'$ may be omitted in this theorem (and the corresponding extension of ${\cal T}$ in Theorem \ref{thm:AAFM}). This is presented in Theorem
\ref{thm-main:ComplexC^k}, one instance of which is ${\cal B}$ being Schwartz space.
A direct Corollary of the theorem is

\begin{thm}\label{thm-main:schwartzfourier}
Let $T: \S_{\C}(n) \to \S_{\C}(n)$ be a bijection. \\
1. Assume $T$
satisfies
\[ T(f*g) = T f\cdot T g.\]
Then there exists a  $C^\infty$-diffeomorphism $u:\R^n \to \R^n$
such that either $T  f(u(x)) = \FF f(x)$ or $T f(u(x))=\FF
\overline{f(x)}$.\\
2. Assume $T$ satisfies
\[ T(f\cdot g) = T f* T g.\]
Then there exists a  $C^\infty$-diffeomorphism $u:\R^n \to \R^n$ such
that either $T  f = \FF (f\circ u)$ or $T f=\FF
(\overline{f\circ u})$.\\
\end{thm}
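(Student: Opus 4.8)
The plan is to reduce both statements to the multiplicative characterization of bijections of $\S_\C(n)$, exploiting that the Fourier transform interchanges pointwise product and convolution in \emph{both} directions: on $\S_\C(n)$ one has $\FF(f\cdot g)=\FF f * \FF g$ and $\FF(f*g)=\FF f\cdot \FF g$. For Part 2, where $T(f\cdot g)=Tf*Tg$, I would set $S:=\FF^{-1}\circ T$. Since $\FF^{-1}$ turns convolution into product, $S(f\cdot g)=\FF^{-1}(Tf*Tg)=\FF^{-1}(Tf)\cdot\FF^{-1}(Tg)=S(f)\cdot S(g)$, so $S$ is a multiplicative bijection of $\S_\C(n)$ (bijectivity being inherited from $T$ and $\FF$). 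For Part 1, where $T(f*g)=Tf\cdot Tg$, I would instead set $S:=T\circ\FF$; then $S(f\cdot g)=T(\FF f*\FF g)=T(\FF f)\cdot T(\FF g)=S(f)\cdot S(g)$, again multiplicative and bijective.

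In each case the multiplicative characterization produces a $C^\infty$-diffeomorphism $u$ with $Sf=f\circ u$ for all $f$, or $Sf=\overline{f\circ u}$ for all $f$. Crucially, since $T$ is only assumed to be a bijection, $S$ need not admit any extension to $\S'_\C(n)$, so one cannot invoke Theorem \ref{thm:AAFM} directly; instead one must use its extension-free strengthening, Theorem \ref{thm-main:ComplexC^k}. For Part 2 the conclusion is then immediate: $\FF^{-1}(Tf)=f\circ u$ gives $Tf=\FF(f\circ u)$, and $\FF^{-1}(Tf)=\overline{f\circ u}$ gives $Tf=\FF(\overline{f\circ u})$, the two asserted forms. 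For Part 1 I would substitute $f\mapsto\FF^{-1}h$ in $T(\FF f)=Sf$ to get $Th=(\FF^{-1}h)\circ u$ (resp. $Th=\overline{(\FF^{-1}h)\circ u}$), and then eliminate $\FF^{-1}$ using the elementary identities $\FF^{-1}g=(\FF g)\circ R$, with $R(x)=-x$, and $\FF^{-1}g=\overline{\FF\bar g}$ (both valid for the normalization fixed above, with no extraneous constants). The first turns $Th=(\FF^{-1}h)\circ u$ into $Th=(\FF h)\circ(R\circ u)$, i.e. $Th(w^{-1}(x))=\FF h(x)$ with $w:=R\circ u$ a diffeomorphism; the second turns $Th=\overline{(\FF^{-1}h)\circ u}$ into $Th=(\FF\bar h)\circ u$, i.e. $Th(u^{-1}(x))=\FF\overline{h}(x)$. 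Relabelling the diffeomorphism ($u':=w^{-1}$, resp. $u':=u^{-1}$) yields exactly the alternatives $Tf(u(x))=\FF f(x)$ and $Tf(u(x))=\FF\overline{f(x)}$.

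The genuinely hard input is the multiplicative characterization itself — that a multiplicative bijection of $\S_\C(n)$ is a composition with a diffeomorphism up to conjugation, and in particular that this holds \emph{without} the dual-extension hypothesis — which is the content of Theorem \ref{thm-main:ComplexC^k} and where all the analytic work resides. Granting it, the present statement is bookkeeping: the points to verify are that $S=\FF^{-1}\circ T$ and $S=T\circ\FF$ are genuinely multiplicative bijections of $\S_\C(n)$ (using only the product/convolution exchange and bijectivity of $\FF$ on Schwartz space), and that the reflection and conjugation identities for $\FF$ convert the intermediate $\FF^{-1}$-form into the stated $\FF$-form while matching the identity case to $\FF f$ and the conjugation case to $\FF\overline{f}$. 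I expect the only mild subtlety to be tracking which diffeomorphism appears — a reflection is absorbed in Part 1 via $\FF^{-1}g=(\FF g)\circ R$ — but this is immaterial, since the theorem asserts only the existence of \emph{some} diffeomorphism $u$.
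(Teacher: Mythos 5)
Your proof is correct and is exactly the reduction the paper intends: it presents Theorem \ref{thm-main:schwartzfourier} as a direct corollary of Theorem \ref{thm-main:ComplexC^k}, obtained by conjugating $T$ with $\FF$ (using that $\FF$ exchanges product and convolution in both directions) to produce a multiplicative bijection of $\S_\C(n)$. Your bookkeeping with the reflection and conjugation identities for $\FF^{-1}$, and the observation that the extension-free Theorem \ref{thm-main:ComplexC^k} (rather than Theorem \ref{thm:AAFM}) is the required input, are all as in the paper.
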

%
%

\begin{rem}
Similarly, Theorem \ref{thm-main:ComplexC^k} may also be used to characterize bijections
$T: \S_{\C}(n) \to \S_{\C}(n)$ which satisfy $T(f*g) = T f*Tg$.
\end{rem}

Let us quote one more application of the method to Fourier theory.
We denote by $C_c^\infty(\R,\C)$ the smooth complex valued function
on $\R$ which have compact support. It is well known, and referred
to as a Paley-Wiener type theorem, that the class
$C_c^\infty(\R,\C)$ is the image under Fourier transform of the
class $PW(\R)$ consisting of functions $F$ which decay on the real
axis faster than any power of $|x|$, and have an analytic
continuation on the complex plane satisfying the estimate  $|F(z)| <
A \exp (B|z|)$ for some constants $A,B$, see for example \cite{GS}. A
similar characterization holds for functions of several variables,
and we denote this class $PW(\R^n) = \FF(C_c^{\infty}(\R^n, \C))$.
%
The following will be an immediate corollary of Theorem
\ref{thm-main:ComplexC^k}.

\begin{thm}\label{thm-main:compactfourier}
Let $T:C_c^\infty(\R^n,\C) \to PW(\R^n)$ be a bijection which
satisfies
\[ T(f*g) =T f\cdot T g.\]
Then there exists a  $C^\infty$-diffeomorphism $u:\R^n \to \R^n$
such that either $T  f(u(x)) = \FF f(x)$ or $T f(u(x))=\FF
\overline{f(x)}$.
\end{thm}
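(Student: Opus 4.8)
The plan is to reduce the statement about the convolution-to-product map $T\colon C_c^\infty(\R^n,\C)\to PW(\R^n)$ to the already-established multiplicative-map theorem (Theorem \ref{thm-main:ComplexC^k}) by conjugating with the Fourier transform. Concretely, recall the Paley--Wiener identification $PW(\R^n)=\FF(C_c^\infty(\R^n,\C))$ and that $\FF$ carries convolution to pointwise product, so $\FF$ is a multiplicative bijection from $(C_c^\infty(\R^n,\C),*)$ onto $(PW(\R^n),\cdot)$. I would therefore set $S:=\FF^{-1}\circ T\colon C_c^\infty(\R^n,\C)\to C_c^\infty(\R^n,\C)$ and check that $S$ inherits a clean algebraic property. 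Since for $f,g\in C_c^\infty$ one has $T(f*g)=Tf\cdot Tg$, applying $\FF^{-1}$ to both sides and using $\FF^{-1}(F\cdot G)=\FF^{-1}F*\FF^{-1}G$ gives $S(f*g)=Sf*Sg$, so $S$ is a \emph{convolution-preserving} bijection of $C_c^\infty(\R^n,\C)$.

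The next step is to convert this convolution-homomorphism into an honest pointwise-multiplicative map so that the $C^k$-type classification of Theorem \ref{thm-main:ComplexC^k} can be invoked. Here I would conjugate $S$ \emph{once more} by Fourier transform, but now viewed as a map on the image algebra: define $R:=\FF\circ S\circ\FF^{-1}$ acting on $PW(\R^n)$. Because $S$ preserves convolution and $\FF$ intertwines convolution with pointwise product, $R$ is a bijection of $PW(\R^n)$ satisfying $R(F\cdot G)=RF\cdot RG$ for all $F,G\in PW(\R^n)$; that is, $R$ is a multiplicative bijection of the function space $PW(\R^n)$. Since $PW(\R^n)$ consists of genuine complex-valued functions on (a domain containing) $\R^n$ that separate points and contain enough localized bumps, it falls within the scope of the space $\mathcal B$ to which Theorem \ref{thm-main:ComplexC^k} applies. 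That theorem then yields a diffeomorphism $v\colon\R^n\to\R^n$ with either $RF=F\circ v$ for all $F$, or $RF=\overline{F\circ v}$ for all $F$.

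Finally I would unwind the two conjugations to recover the asserted form of $T$. Writing $u:=v$ (or its appropriate inverse, a matter of bookkeeping), the identity $R=\FF\circ\FF^{-1}\circ T\circ\FF^{-1}$ on $PW(\R^n)$ translates, after substituting $F=\FF f$ for $f\in C_c^\infty$, into a relation of the shape $T f(u(x))=\FF f(x)$ in the first case and $Tf(u(x))=\FF\overline{f(x)}$ in the conjugate case, exactly as stated. The conjugation case must be tracked carefully, since complex conjugation interacts with $\FF$ via $\overline{\FF f}=\FF(\overline{f(-\,\cdot\,)})$, and this reflection can be absorbed into the diffeomorphism $u$; checking that the two alternatives of Theorem \ref{thm-main:ComplexC^k} produce precisely the two alternatives claimed here is the one spot requiring care.

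The main obstacle I anticipate is not the algebra of the conjugations, which is essentially formal, but verifying that $PW(\R^n)$ satisfies the hypotheses of Theorem \ref{thm-main:ComplexC^k} (the role played by $\mathcal B$). One must confirm that the multiplicative structure of $PW(\R^n)$ is rich enough—that it contains sufficiently many functions with controlled supports and nonvanishing behaviour to pin down the underlying point evaluations—so that the classification genuinely applies; if $PW(\R^n)$ is not literally an admissible $\mathcal B$, one would instead argue directly on $C_c^\infty(\R^n,\C)$ as the source space and use that $C_c^\infty$ \emph{is} covered by the theorem, rephrasing $R$ accordingly. Either way, the crux is matching the Paley--Wiener space to the abstract hypotheses rather than any hard estimate.
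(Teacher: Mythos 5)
Your reduction is the one the paper itself intends: the paper gives no argument beyond declaring the theorem ``an immediate corollary of Theorem \ref{thm-main:ComplexC^k}'', and the content of that remark is exactly your conjugation by the Fourier transform. (A minor simplification: your detour through $S=\FF^{-1}\circ T$ is unnecessary, since $R=\FF\circ S\circ\FF^{-1}$ collapses to $R=T\circ\FF^{-1}$, which is already a multiplicative bijection of $PW(\R^n)$.) Your treatment of the conjugate alternative via $\overline{\FF f}=\FF\bigl(\overline{f(-\cdot)}\bigr)$, absorbing the reflection into $u$, is also correct.

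The substantive problem is exactly the one you flag in your last paragraph, and it is more serious than your phrasing suggests. Theorem \ref{thm-main:ComplexC^k} is stated only for $\B\in\{C^k(M,\C),\ C^k_c(M,\C),\ \S_\C(n)\}$, and $PW(\R^n)$ is none of these; nor can it be handled by the same machinery, because every nonzero element of $PW(\R^n)$ extends to an entire function and therefore vanishes on no open set. Hence $C_c^\infty(\R^n,\C)\not\subset PW(\R^n)$, there are no bump functions or characteristic sequences in $PW(\R^n)$, and the zero-set analysis of Section \ref{sec:zerosets} (Propositions \ref{prop-compact} and \ref{mrcun}), on which the whole classification rests, does not apply to $\B=PW(\R^n)$. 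Your proposed fallback --- ``argue directly on $C_c^\infty(\R^n,\C)$ as the source space'' --- cannot succeed either: since $\FF$ simultaneously swaps $C_c^\infty\leftrightarrow PW$ and $*\leftrightarrow\cdot$, any composition of $T^{\pm 1}$ with powers of $\FF$ yields either a convolution-preserving automorphism of $C_c^\infty(\R^n,\C)$ or a pointwise-multiplicative automorphism of $PW(\R^n)$, never a pointwise-multiplicative automorphism of $C_c^\infty(\R^n,\C)$. So the theorem genuinely requires classifying multiplicative bijections of $PW(\R^n)$ (equivalently, convolution automorphisms of $C_c^\infty(\R^n,\C)$), and neither your write-up nor the paper's one-line justification supplies that step. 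The gap is thus shared with the paper, but as written your argument does not close it.
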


The setting of Schwartz space, and of its dual, in previous results, was very
specific, and from the point of view of merely multiplicative
mappings - not very natural. It was discussed mainly for its
application to Fourier transform. However, in other characterization
problems we found that similar tools were used in their proofs, and
it turned out that in most of the natural situations in which we
encounter multiplicative transforms, it is possible to characterize
their form. One example was already given above in the form of compactly supported infinitely smooth functions.
Below are several other such examples, and these are the main
theorems to be proven in this note. The exposition is intended to
make these tools available to the reader, more than to demonstrate
the specific results, most of which we later discovered have already
been proved in the literature (some more than 60 years ago, and some
very recently). We view our method as very straightforward and
natural, and believe it can be applied in many different situations.


In the following, a map $T:\mathcal B\to\mathcal B$ between some
class $\mathcal B$ of real- or complex-valued functions, is called
multiplicative if $T(fg)=Tf\cdot Tg$ pointwise for all $f,
g\in\mathcal B$. Throughout the paper, we will address several
families of $C^k$ functions, defined on a $C^k$ manifold $M$. When
discussing Schwartz functions, it should always be understood that
$M=\R^n$, and $k=\infty$.

Out first theorem regards multiplicative maps on continuous real
valued functions, and it goes back to Milgram \cite{Milmgram}. We
also extend it to the class of continuous compactly supported
functions.

\begin{thm}\label{thm-main:Cmfld}
Let $M$ be a real topological manifold, and $\B$ is either $C(M,\R)$
or $C_c(M,\R)$. Let $T:\B \to \B$ be a multiplicative bijection.
Then there exists some continuous $p:M\to \RR_+$, and a
homeomorphism $u: M \to M$ such that
\begin{equation}
(Tf)(u(x)) = |f(x)|^{p(x)} sgn(f(x)).
\end{equation}
\end{thm}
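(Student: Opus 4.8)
The plan is to characterize the multiplicative bijection $T$ by exploiting the pointwise structure of the algebra of functions. The key principle is that a multiplicative bijection must preserve the ``pointwise'' structure, so the first and most important step is to recover the homeomorphism $u$ of the underlying manifold from purely algebraic/multiplicative data. To do this, I would identify points of $M$ with suitable subsets of $\B$ that are intrinsically defined in terms of the multiplicative structure. The natural candidates are the zero sets: for $f \in \B$, let $Z(f) = \{x : f(x)=0\}$. Multiplicativity gives $Z(fg) = Z(f) \cup Z(g)$, and idempotents behave specially since $T$ must send functions vanishing on large sets to functions vanishing on large sets. The cleanest approach is to detect, for each point $x \in M$, the maximal multiplicative ideal (or equivalently the set of functions vanishing at $x$), and show that $T$ maps such families to analogous families, thereby inducing a bijection $u^{-1}: M \to M$.

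First I would establish that $T$ preserves the constant function structure enough to handle signs and magnitudes separately. Note $T(1)=1$ since $1$ is the multiplicative identity and $T$ is a bijective homomorphism. Also $T(f^2) = (Tf)^2 \geq 0$, so $T$ sends nonnegative functions to nonnegative functions, and likewise the nonvanishing nonnegative functions (which form the group of squares under multiplication, once we note every positive function is a square) map to nonvanishing nonnegative functions. This lets me split the analysis: on strictly positive functions $T$ acts as a multiplicative bijection of the positive cone, while the sign data $\mathrm{sgn}(f) \in \{-1,0,+1\}$ is handled via the idempotent-like elements (functions valued in $\{-1,0,1\}$, or more carefully, via continuity, just the signs $\pm 1$ for nonvanishing functions).

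Next, having fixed a point $x$ and the induced point $u^{-1}(x)$, I would study the induced map on the ``germ'' or ``value'' at that point. For a fixed $x$, evaluation produces, for positive functions, a multiplicative map $\R_+ \to \R_+$ (after composing with evaluation at $u^{-1}(x)$). Applying the one-dimensional multiplicative-function Lemma \ref{lem-easy} pointwise, together with the measurability/continuity forced by the function-space structure, yields an exponent $p(x) > 0$ with $(Tf)(u(x)) = f(x)^{p(x)}$ on positive $f$, and continuity of $p$ follows by testing against continuously varying families of functions. Combining with the sign analysis gives the claimed formula $(Tf)(u(x)) = |f(x)|^{p(x)}\,\mathrm{sgn}(f(x))$.

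The main obstacle will be the first step: recovering the homeomorphism $u$ from the multiplicative structure alone, and in particular proving that $u$ is continuous (and a homeomorphism, not merely a bijection) and that the induced pointwise evaluation maps are automatically measurable so that Lemma \ref{lem-easy} applies. In the $C(M,\R)$ case one can lean on the fact that maximal ideals of the ring, or more precisely the multiplicative analogues, correspond to points, but here we only have multiplicativity (not additivity), so the usual Gelfand/Milman--Banach--Stone machinery is not directly available; instead I expect the argument to detect points via the order structure and the behavior of $T$ on functions with prescribed zero sets and supports. The compactly supported case $C_c(M,\R)$ requires extra care since there is no unit, so the identity $T(1)=1$ trick is unavailable and one must instead use local units or an exhaustion by supports to run the same pointwise reconstruction.
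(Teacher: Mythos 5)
Your overall strategy coincides with the paper's: recover a point map $u$ from the zero-set structure, split off the sign via squares and idempotent-like germs, and reduce to the scalar functional equation of Lemma \ref{lem-easy} at each point, with continuity of $p$ read off from test functions. However, the two steps you compress into single clauses are where essentially all of the work lies, and as written they contain genuine gaps. The first is the assertion that ``evaluation produces a multiplicative map $\R_+\to\R_+$'': this presupposes that $(Tf)(u(x))$ depends only on the \emph{value} $f(x)$. What the zero-set correspondence (Proposition \ref{prop-compact}) yields, after multiplying by bump functions, is only that the germ of $Tf$ at $u(x)$ depends on the germ of $f$ at $x$; passing from germs to values needs a separate device. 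The paper (Step 3 of Proposition \ref{propT_u123}) glues $f_1,f_2$ with $f_1(x_0)=f_2(x_0)$ along two sectors whose closures meet only at $x_0$ into one continuous $f_3$, and uses germ-locality on each open sector together with continuity of $Tf_3$ to force $(Tf_1)(u(x_0))=(Tf_3)(u(x_0))=(Tf_2)(u(x_0))$. Without something of this kind your pointwise map $K_x$ is not even well defined.

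The second gap is the measurability needed to invoke Lemma \ref{lem-easy}. After taking logarithms, $K_x$ is merely an additive function of a real variable, and nothing a priori excludes a Hamel-basis pathology: the hypotheses give continuity of each $Tf$ in the manifold variable, not continuity of $K_x$ in the value variable, so measurability is not ``forced by the function-space structure'' in any soft sense. The paper extracts it in Appendix A (Lemma \ref{lemma6.2}) by a real construction: if the functional were nonlinear at a sequence of points accumulating somewhere, one assembles a single continuous function from disjointly supported bumps whose image under $T$ would be discontinuous there (using that nonlinear additive functions are unbounded on every interval); hence the bad set is discrete, every point is a limit of good points, $K_x$ is a pointwise limit of power functions, hence measurable, hence a power by the Banach--Sierpinski theorem. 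This is the analytic heart of the proof and is absent from your outline. By contrast, the obstacle you do flag --- constructing $u$ --- is resolved in the paper exactly along the lines you guess: either via divisibility of $f^4$ by functions with one-point zero sets and a gcd argument, or via Mrcun's characteristic sequences $f_jf_{j+1}=f_{j+1}$ with $\bigcap\supp(f_j)=\{x\}$, the latter also supplying the local units that replace $T(1)=1$ in the $C_c(M,\R)$ case; and the remaining points (continuity of $p$ from locally constant test functions, positivity of $p$ from surjectivity and from continuity of $Tf$ at an isolated zero) are as routine as you suggest.
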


\begin{rem} Without some non-degeneracy (and above we assume bijectivity,
which is very strong non-degeneracy) there is a simple
counterexample: let $Tf = f$ on $x \le 0$, let $Tf (x) = f(x-1)$ on
$x \ge 1$ and let $Tf = f(0)$ on $[0,1]$. However, this
counterexample may actually hint that in a more general situation
the map $u$ may be a set valued map.
\end{rem}

Next we move to the classes of $C^k$ functions, where similar
theorems hold, and moreover, no extra power is allowed, so that the
mapping is automatically linear. This theorem is also known, but
much more recent - it appears in \cite{semrl-mrc} for $k<\infty$.
The $C^{\infty}$ case remained open in \cite{semrl-mrc}, and our
method is able to clarify it as well. However, it also was already
settled (by a considerably different method altogether) in
\cite{sancheztimes2}. We also obtain the same results for some
subspaces of $C^k$, namely the compactly supported functions
$C_c^k$, and the Schwartz functions $\S(n)$.

\begin{thm}\label{thm-main:Ckmfld}
Let $M$ be a $C^k$ real manifold, $1\leq k\leq \infty$, and $\B$ is
one of the following function spaces: $C^k(M,\R)$, $C^k_c(M,\R)$ or
$\S_\R(n)$. Let $T:\B \to \B$ be a multiplicative bijection. Then
there exists some $C^k$-diffeomorphism $u: M \to M$ such that
\begin{equation}
(Tf)(u(x)) = f(x),
\end{equation}
In particular, $T$ is linear.
\end{thm}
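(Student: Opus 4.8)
The plan is to prove Theorem~\ref{thm-main:Ckmfld} by first establishing a pointwise structural description of $T$, then using the smoothness of the function space to rule out the exponent appearing in the continuous case.

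\textbf{Step 1: A pointwise reduction.} First I would show that the value $(Tf)(y)$ depends only on the value of $f$ at a single point. Fix $y\in M$ and consider the evaluation functional $f\mapsto (Tf)(y)$. Multiplicativity forces this to be a multiplicative functional $\B\to\R$. The key observation is that the constant function $f\equiv 1$ is the multiplicative identity, so $T(1)=1$, and more generally $T$ maps the idempotents of $\B$ (functions taking only values in $\{0,1\}$) to idempotents; in a $C^k$ setting with $k\ge 1$ the only idempotents are the constants $0$ and $1$, so I would instead work through the multiplicative structure more carefully. The cleanest route is to study, for each $y$, the maximal ideal $I_y=\{f:(Tf)(y)=0\}$; using bijectivity and multiplicativity one shows $I_y$ is a maximal multiplicative ideal, and for the spaces in question such ideals are exactly the point-evaluation ideals $\{f:f(x)=0\}$ for some $x=u^{-1}(y)\in M$. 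This defines a map $u$ and reduces the problem to understanding the local functional $\phi_y(t):=(Tf)(y)$ where $f(u^{-1}(y))=t$, which by Lemma~\ref{lem-easy} (applied after checking measurability) must be of the form $|t|^{p}$ or $|t|^{p}\,\mathrm{sgn}(t)$.

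\textbf{Step 2: Forcing $p=1$ and killing the absolute value.} This is where the $C^k$ hypothesis does the real work and is \emph{the main obstacle}. In the continuous case an arbitrary continuous exponent $p(x)$ is permitted, but $|t|^{p}$ and $|t|^{p}\mathrm{sgn}(t)$ are generally not $C^k$ in $t$ for $k\ge 1$ unless $p=1$ and we are in the $\mathrm{sgn}$ branch (giving the identity on values). The plan is to exploit that $T$ maps $\B$ \emph{into} $\B$: choosing a test function $f$ that is smooth and crosses zero transversally at the relevant point, the image $(Tf)\circ u^{-1}$ must again be $C^k$; substituting the form $|f|^{p}sgn(f)$ and examining differentiability at the zero set forces $p(x)=1$ identically and selects the odd branch. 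One must handle the regularity of $u$ and of $x\mapsto p(x)$ jointly — showing first that $u$ is a $C^k$-diffeomorphism (by testing against coordinate functions and their images, and using that both $T$ and $T^{-1}$ preserve $\B$) and only then that the exponent collapses.

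\textbf{Step 3: Assembly and linearity.} Once $p\equiv 1$ and the odd branch are established, we have $(Tf)(u(x))=f(x)$ with $u$ a $C^k$-diffeomorphism, which is exactly the claimed form; linearity is then immediate since composition with a fixed diffeomorphism is linear. For the variants $C_c^k(M,\R)$ and $\S_\R(n)$ the same argument applies, with only minor adjustments to guarantee that the maximal ideals are still point evaluations and that $u$ together with its inverse preserves the relevant decay or support conditions (so that $u$ is not merely a diffeomorphism but compatible with the extra structure of the space). I expect the smoothness-at-zeros analysis in Step~2 to be the genuinely delicate point, since it is precisely the mechanism that distinguishes the rigid $C^k$ conclusion from the more flexible continuous case of Theorem~\ref{thm-main:Cmfld}.
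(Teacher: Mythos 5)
Your Step 1 contains the gap that swallows the real content of the theorem. The maximal-ideal (equivalently, zero-set) argument does give you a point map $u$ with $(Tf)(u(x))=0 \iff f(x)=0$ — this is what the paper establishes in Proposition \ref{prop-compact} — but it does \emph{not} give you that $(Tf)(u(x))$ is a function of the single number $f(x)$. Multiplicativity plus locality only force $(Tf)(u(x))$ to depend on the germ of $f$ at $x$, and, after a Whitney-extension argument, on the $k$-jet $J^kf(x)$; a priori the answer may involve derivatives of $f$. Indeed, on nowhere-vanishing smooth functions the map $f\mapsto f\cdot\exp(f'/f)$ is multiplicative and genuinely depends on $f'$, so your claim that the evaluation functional $f\mapsto(Tf)(y)$ factors through $t=f(u^{-1}(y))$ and can then be fed into Lemma \ref{lem-easy} is unjustified — it is precisely the statement that needs proving. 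The paper spends most of its effort here: it writes $T_u(f)(x)=\exp(\langle c_k(x),J^k(\log f)(x)\rangle)$ for positive $f$ (Lemma \ref{lem:general_form}, resting on the additive-functional lemmas of Appendix A), and then kills every jet component of order at least $1$ by testing on linear functions $f=\sum\lambda_jx_j$, whose logarithmic derivatives blow up like $(\sum\lambda_jx_j)^{-|\alpha|}$ at the zero set, contradicting continuity of $T_uf$ there.

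By contrast, the part you flag as the main obstacle — forcing $p=1$ once you know $T_uf=|f|^{p}\,\mathrm{sgn}(f)$ pointwise — is the easy finish: the paper disposes of it essentially in one line once $u$ is known to be a $C^k$-diffeomorphism (a non-unit exponent is incompatible with $Tf\in C^k$ at a transversal zero, and surjectivity rules out degenerate exponents). So the architecture of your argument is inverted: your Step 1 is where the theorem actually lives, and the analysis you place in Step 2 can only begin after dependence on derivatives has been excluded, which your outline never does.
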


We also address the case of complex-valued functions, which seems
not to have been treated in previous works.
\begin{thm}\label{thm-main:Complex}
Let $M$ be a topological real manifold, and $\B$ is either $C(M,\C)$
or $C_c(M,\C)$. Let $T:\B \to \B$ be a multiplicative bijection.
Then there exists some homeomorphism $u: M \to M$ and a function
$p\in C(M,\C)$, $Re(p)>0$ such that either
\[T(re^{i\theta})(u(x)) = |r(x)|^{p(x)} e^{i\theta(x)}\] or \[T(re^{i\theta})(u(x)) = |r(x)|^{p(x)} e^{-i\theta(x)}\]
\end{thm}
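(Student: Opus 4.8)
The plan is to reduce the global statement to a fibrewise one and then classify the resulting pointwise maps using the group structure of $\C^*$. I begin with the standard normalizations. From $T(1)=T(1)^2$ together with surjectivity (every function nonvanishing at a given point lies in the image) one gets $T(1)=1$, and from $T(0)=T(0)\cdot Tf$ for all $f$ one gets $T(0)=0$. Since $f$ is a unit of $\B$ (nowhere-vanishing) precisely when $fg=1$ for some $g$, and $T(1)=1$, the bijection $T$ carries the nowhere-vanishing functions onto themselves; more generally $T$ respects the zero-set calculus $Z(fg)=Z(f)\cup Z(g)$.

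The core of the argument is a locality step: I would produce a homeomorphism $u:M\to M$ and, for each $x\in M$, a map $K_x:\C\to\C$ with $(Tf)(u(x))=K_x(f(x))$ for all $f\in\B$. Points of $M$ are recovered from the purely multiplicative data as the prime multiplicative ideals $S_x=\{f:f(x)=0\}$: these satisfy $fg\in S_x\Rightarrow f\in S_x$ or $g\in S_x$, and among the vanishing-on-a-set ideals only the single points are prime, so that $T$, permuting these, induces the bijection $u$, which continuity of the elements of $\B$ upgrades to a homeomorphism. That $(Tf)(u(x))$ depends only on the value $f(x)$, and not merely on the germ of $f$ at $x$, is exactly the continuous ($C^0$) phenomenon underlying Milgram's theorem, and I would obtain it here by the same order and zero-set reasoning used for Theorem~\ref{thm-main:Cmfld}; the $C_c$ case is identical, with compactly supported bump functions replacing constants. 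This reduction is the main obstacle: the algebraic separation of modulus from phase is genuinely awkward globally (for instance on $M=S^1$ the positive functions are not distinguished from null-homotopic unimodular ones by divisibility alone), but once the problem is made fibrewise the complex structure of $\C$ enters only on each fibre, where it is benign.

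Next I would classify $K_x$. Each $K_x$ is a multiplicative bijection of $\C$ fixing $0$, hence restricts to a group automorphism of $\C^*\cong\R_{>0}\times U(1)$; the regularity needed (measurability, hence continuity, of $K_x$) follows from continuity of the images $Tf$, allowing the complex analogue of Lemma~\ref{lem-easy} to be applied. A continuous endomorphism of $\C^*$ is determined on the two factors: on $\R_{>0}$ it is $r\mapsto r^{p}$ for some $p\in\C$, while $U(1)$, being compact, must map into $U(1)$ by $e^{i\theta}\mapsto e^{in\theta}$ with $n\in\Z$. Thus $K_x(re^{i\theta})=r^{p(x)}e^{in(x)\theta}$. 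Bijectivity forces $\mathrm{Re}(p(x))\neq 0$ and $n(x)=\pm 1$, and continuity of $Tf$ at a point where $f$ vanishes forces $r^{\mathrm{Re}(p(x))}\to 0$ as $r\to 0^+$, i.e. $\mathrm{Re}(p(x))>0$. Writing $|r|^{p(x)}:=r^{p(x)}$, this is precisely the asserted form, with the two cases $n(x)=+1$ and $n(x)=-1$.

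Finally I would assemble the fibrewise data globally. Testing $T$ on a fixed positive constant (a compactly supported function equal to $2$ near $x$, in the $C_c$ case) exhibits $x\mapsto 2^{p(x)}$ as a value of a function in $\B$, whence $x\mapsto p(x)$ is continuous; testing on the constant $i$ gives $(T(\text{const }i))(u(x))=n(x)\,i\in\{i,-i\}$, so $x\mapsto n(x)\in\{\pm 1\}$ is a continuous $\{\pm 1\}$-valued, hence locally constant, function. On a connected $M$ it is therefore globally constant, which produces the stated dichotomy: either $n\equiv +1$, or $n\equiv -1$. (For disconnected $M$ the sign is constant on each component, and connectedness of the manifold is exactly what yields the single global alternative.) Substituting the classified $K_x$ into $(Tf)(u(x))=K_x(f(x))$ gives the two displayed formulas, completing the proof.
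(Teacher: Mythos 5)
Your overall architecture matches the paper's: zero sets produce a homeomorphism $u$, a locality argument reduces $T$ to fibrewise maps $K_x$ with $(Tf)(u(x))=K_x(f(x))$, and $K_x$ is classified as a continuous endomorphism of $\C^*$. Your direct description of $\mathrm{Hom}(\C^*,\C^*)$ is in fact a cleaner packaging than the paper's four-fold decomposition of $T_u(re^{i\theta})$ into homomorphisms $G,H,D,E$ (Proposition \ref{propT_u123decomp}), and it correctly keeps the phase $e^{i\,\mathrm{Im}(p)\log r}$ hidden inside $|r|^{p}$. However, the two steps you treat as routine are exactly where the content lies, and as written both have gaps. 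For the reduction to $K_x$: your prime-ideal characterization of $S_x=\{f:f(x)=0\}$ does not by itself show that $T$ permutes these sets. You would first need to know that $T$ carries vanishing-on-a-set ideals to vanishing-on-a-set ideals before ``only singletons are prime among these'' can be invoked, and that is not a consequence of multiplicativity alone (for noncompact $M$, and for $C_c$, there are multiplicative ideals not of this form). The paper's Section \ref{sec:zerosets} is devoted to precisely this point, via either the divisibility/gcd constructions or Mrcun's characteristic sequences $f_jf_{j+1}=f_{j+1}$ -- the latter being what actually handles $C_c(M,\C)$; the reasoning is not order-theoretic, so deferring to ``the same reasoning as Theorem \ref{thm-main:Cmfld}'' leaves the hardest part unaddressed rather than reused.

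The more serious gap is the regularity of $K_x$. You assert that measurability of $c\mapsto K_x(c)$ ``follows from continuity of the images $Tf$.'' For a fixed $x$ it does not: each $Tf$ is continuous in the base variable, which says nothing about how $K_x(c)$ varies as one changes the value $c$ assigned at the single point $x$ by different functions. Without measurability the classification step fails outright: using a Hamel basis of $\R$ over $\Q$ one can build non-measurable multiplicative bijections of $\C^*$ fixing the two factors, so the form $K_x(re^{i\theta})=r^{p}e^{in\theta}$ cannot be concluded. The paper's Appendix A exists to close exactly this hole: it shows that the set of base points where the fibrewise functional is non-linear is discrete (by summing carefully scaled bump functions supported near a putative accumulating sequence of bad points), and then empty, by approximating a bad point by good ones so that the fibrewise map becomes a pointwise limit of continuous functions, hence measurable, and then invoking Banach--Sierpi\'nski (Lemma \ref{lemma6.2} and its $S^1$ variants). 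A smaller but real issue of the same kind appears at the end: testing on the single constant $2$ shows $e^{p(x)\log 2}$ is continuous, which gives continuity of $\mathrm{Re}(p)$ but determines $\mathrm{Im}(p)$ only modulo $2\pi/\log 2$; one must test against a family of constants (or again use the continuity of the sections furnished by Appendix A) to conclude that $p$ itself is continuous.
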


\begin{thm}\label{thm-main:ComplexC^k}
Let $M$ be a $C^k$ real manifold, $1\leq k \leq \infty$,  and $\B$
is one of the following function spaces: $C^k(M,\C)$, $C^k_c(M,\C)$
or $\S_\C(n)$. Let $T:\B \to \B$ be a multiplicative bijection. Then
there exists some $C^k$-diffeomorphism $u: M \to M$ such that either
$Tf(u(x)) = f(x)$ or $Tf(u(x))=\overline{f(x)}$. In particular, $T$
is $\RR$-linear.
\end{thm}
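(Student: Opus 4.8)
The plan is to show that $T$ acts pointwise over a bijection of $M$, to classify the resulting one-variable maps, and then to use the $C^k$ regularity both to remove the modulus power and to make the conjugation choice global. Throughout I would follow the same device as in the real theorems, adding the phase as the genuinely new ingredient.

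First I would reconstruct a point map. As in Theorems \ref{thm-main:Cmfld} and \ref{thm-main:Ckmfld}, multiplicativity together with bijectivity forces $T(0)=0$ and $T(1)=1$, and more generally $T$ permutes the idempotents of $\B$, i.e.\ the locally constant $\{0,1\}$-valued functions. Using the Appendix~A type lemmas one encodes, purely in terms of the semigroup $(\B,\cdot)$, the zero set $Z(f)=\{x:f(x)=0\}$ and the containment relation between zero sets of two functions; since these determine the points of $M$ (a point being recovered from the filter of zero sets shrinking to it, built from compactly supported, respectively Schwartz, bumps in the $C^k$, $C^k_c$ and $\S$ cases), one obtains a bijection $u:M\to M$ with $Z(Tf)=u(Z(f))$ for all $f$, and hence maps $F_x:\C\to\C$ with
\[
(Tf)(u(x)) = F_x\big(f(x)\big)\qquad(x\in M,\ f\in\B).
\]
This reduction is where the topology of $M$ and the chosen function class enter, and I expect it to be the main obstacle.

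Next I would classify $F_x$. Each $F_x$ is multiplicative, fixes $0$, and is a bijection of $\C$ (injectivity and surjectivity following from bijectivity of $T$). Feeding $T$ a continuous one-parameter family of test functions shows that $z\mapsto F_x(z)$ is continuous, so a complex analogue of Lemma \ref{lem-easy} applies to the automorphism $F_x$ of $(\C^\ast,\cdot)\cong\R_{>0}\times S^1$: its restriction to $S^1$ is a continuous automorphism, hence $e^{i\theta}\mapsto e^{\pm i\theta}$, and its restriction to $\R_{>0}$ is $r\mapsto r^{p(x)}$, with continuity of $F_x$ at $0$ forcing $\mathrm{Re}\,p(x)>0$. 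Thus, with $p$ continuous,
\[
F_x(re^{i\theta}) = r^{p(x)}e^{i\theta}\quad\text{or}\quad F_x(re^{i\theta}) = r^{p(x)}e^{-i\theta},
\]
which is precisely the form furnished by Theorem \ref{thm-main:Complex}.

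Finally I would use smoothness to finish. Since $F_x$ maps the nonnegative real axis to itself ($r\mapsto r^{p(x)}$), $T$ carries nonnegative real functions to nonnegative real functions; applying $T$ to nonnegative test functions with prescribed finite vanishing orders reproduces the smoothness obstruction analyzed in Theorem \ref{thm-main:Ckmfld}, which forces $p(x)\equiv 1$ and shows that $u$ is a $C^k$-diffeomorphism (a homeomorphism pulling $C^k$ functions back to $C^k$ functions along coordinate charts, together with the same statement for $T^{-1}$). It then remains to make the conjugation uniform. Setting $A=\{x\in M: F_x=\mathrm{id}\}$, the value $(Tf)(u(x))$ equals $f(x)$ on $A$ and $\overline{f(x)}$ on its complement; choosing an $f\in\B$ with non-real values near any would-be boundary point shows $Tf$ would fail to be continuous unless $u(A)$ is clopen, so $A$ is clopen and the sign is locally constant, hence constant on connected $M$. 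This yields globally $(Tf)(u(x))=f(x)$ or globally $(Tf)(u(x))=\overline{f(x)}$, and in either case $T$ is $\R$-linear. The three classes $C^k(M,\C)$, $C^k_c(M,\C)$ and $\S_\C(n)$ differ only in the supply of test functions, which affects only the point-recovery step and is accommodated there.
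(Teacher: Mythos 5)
There is a genuine gap at the step you yourself flag as the main obstacle: the reduction to a pointwise action $(Tf)(u(x))=F_x\bigl(f(x)\bigr)$. The zero-set machinery (Proposition \ref{prop-compact} and the Mrcun-style construction) only yields that $(Tf)(u(x))$ depends on the \emph{germ} of $f$ at $x$, equivalently on the full $k$-jet $J^kf(x)$ (Proposition \ref{propT_u123complex}(2)); it does not give dependence on the value $f(x)$ alone. For $k=0$ the jet is the value and your reduction is exactly Theorem \ref{thm-main:Complex}, but for $k\ge 1$ the a priori form of the transform, after decomposing $f=re^{i\theta}$, is
\[
T_u(re^{i\theta})(x)=r(x)^{g_0(x)}\,e^{i\langle h_k(x),J^k\log r(x)\rangle}\,e^{\langle d_k(x),J^k\theta(x)\rangle}\,e^{i\langle e_k(x),J^k\theta(x)\rangle},
\]
where the modulus can feed into the phase (the $h_k$ term), the phase can feed into the modulus (the $d_k$ term), and the phase part $e_k$ can depend on derivatives of $\theta$. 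None of these occur in the real case, so your appeal to ``the same smoothness obstruction as in Theorem \ref{thm-main:Ckmfld}'' only disposes of the $g_0$ and (partly) $h_k$ pieces, which live on $\R_{>0}$. Killing $d_k$ requires a dedicated construction (in the paper: $r=\exp(-1/|x|^2)$ paired with a phase $\theta\sim C|x|^{-2}$ blowing up at a point, so that a nonzero $d_k$ would destroy continuity of $T_uf$ at a zero of $f$); killing the derivative dependence of $e_k$ when $k=\infty$ cannot be done by the finite-jet counting used in the real case and is handled in the paper by showing the phase operator induces isomorphisms of stalks and then invoking Peetre's theorem (Lemma \ref{peetre}). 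Your classification of $F_x$ as a continuous automorphism of $\C^\ast$ is therefore classifying the wrong object for $k\ge1$: the correct object is a jet-dependent homomorphism, and eliminating its non-pointwise components is the substance of the proof. (A smaller point: $m=\pm1$ for the phase character comes from injectivity of $T$ on compactly supported functions, not merely from each $F_x$ being continuous on $S^1$.) Your final clopen/connectedness argument for making the conjugation sign locally constant is fine and matches the paper's use of the locally constant integer $m$.
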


We give other variants of these theorems, and applications to
Fourier transform, in Section \ref{sec:applicationtofourier}.



\subsection*{Acknowledgements}
The authors would like to thank Mikhail Sodin for several useful discussions, Bo'az Klartag for explaining to us the failure of our original method of zero sets in the $C^\infty$ $n$-dimensional setting, and the referee for numerous useful remarks and  references, which allowed us to improve our results, and helped to provide better structure and context for the article.

\section{Zero sets}\label{sec:zerosets}

In the following section, $0\leq k\leq\infty$, and $M$ is a $C^k$
real manifold. We use for $f\in C^k(M)$ the notation $Z(f)$ for the
zero-set of the function, namely $Z(f)=\{x\in M:f(x)=0\}$. We will
also need (though in a very mild manner) the notion of the ``jet''
of a function at a point; in fact, we will only need here a function
$\rho$ whose $k$-jet at a point $x_0$, denoted $J\rho (x_0)$, is
vanishing. This roughly means that all its derivatives at the point
vanish. For the precise definition of a jet, see Appendix A.
Finally, we fix a field $\F$ which is either $\R$ or $\C$. For the
remainder of the section, all functions will have values in $\F$,
and it will be often omitted from the notation.

The goal of this section is to establish the following
\begin{prop}\label{prop-compact}
Let $0\le k \le \infty$ be an integer, and let $M$ be a $C^k$
manifold. Let $\B$ be one of the following function families:
$C^k(M,\F)$, $C^k_c(M,\F)$, $\S_{\F}(n)$. Assume $T:\B\rightarrow \B$ is a
multiplicative bijection. Then there exists a homeomorphism $u:M\to
M$ such that $Z(Tf)=u(Z(f))$ for all $f\in \B$.
\end{prop}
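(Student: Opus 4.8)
The plan is to recover the homeomorphism $u$ from the purely multiplicative structure of $\B$, by locating the points of $M$ as distinguished sub-objects of the semigroup $(\B,\cdot)$ that $T$ must permute. Throughout, the basic structural fact I will use is that $f\mapsto Z(f)$ carries products to unions, $Z(fg)=Z(f)\cup Z(g)$, so that $x\in Z(fg)$ iff $x\in Z(f)$ or $x\in Z(g)$. As a warm-up, note that $T$ fixes the distinguished elements $0$ and $1$: since $1\cdot f=f$ for all $f$ and $T$ is onto, $T(1)$ is an identity element of $\B$, hence $T(1)=1$; since $0\cdot f=0$ for all $f$, the element $T(0)$ is absorbing, hence $T(0)=0$. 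In particular $Z(Tf)=M$ iff $Z(f)=M$.

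For each $x\in M$ set $\mathfrak p_x=\{f\in\B:f(x)=0\}$, the kernel of evaluation at $x$. These sets are singled out by conditions phrased entirely in terms of multiplication: $\mathfrak p_x$ is \emph{absorbing} ($\B\cdot\mathfrak p_x\subseteq\mathfrak p_x$), it is \emph{prime} ($fg\in\mathfrak p_x$ implies $f\in\mathfrak p_x$ or $g\in\mathfrak p_x$, which is exactly the statement that $(fg)(x)=0$ forces a factor to vanish at $x$), and it is proper. Because $T$ is a multiplicative bijection, it maps absorbing sets to absorbing sets, prime sets to prime sets, and proper sets to proper sets, and it preserves inclusions; hence $T$ permutes the family of those absorbing primes that are distinguished intrinsically. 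Granting, for the moment, that the intrinsically distinguished primes are \emph{exactly} the point-kernels $\{\mathfrak p_x:x\in M\}$, we obtain a bijection $u:M\to M$ defined by $T(\mathfrak p_x)=\mathfrak p_{u(x)}$. Unwinding the definitions, $x\in Z(f)$ iff $f\in\mathfrak p_x$ iff $Tf\in\mathfrak p_{u(x)}$ iff $u(x)\in Z(Tf)$, which is precisely $Z(Tf)=u(Z(f))$.

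The main obstacle is the identification just granted: showing that the point-kernels are singled out among all proper absorbing primes by an intrinsic multiplicative condition, uniformly over the three classes $C^k(M,\F)$, $C^k_c(M,\F)$ and $\S_\F(n)$. Some care is genuinely needed, since there are spurious proper absorbing primes --- for instance $\{f:Z(f)\neq\emptyset\}$ on a space admitting nonvanishing functions --- so the $\mathfrak p_x$ must be cut out by a finer property (I would use minimality among nonzero proper absorbing primes, together with the requirement that the common zero set be a single point). The verification rests on the abundance of test functions on a manifold: for distinct $x,y$ there is $f\in\B$ with $f(x)=0\neq f(y)$ (separation), and around any $x$ there are bump functions $b\in\B$ with $b(x)\neq0$ and arbitrarily small support. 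Separation shows $\bigcap_{f\in\mathfrak p_x}Z(f)=\{x\}$, so distinct point-kernels differ; the primeness obstruction rules out the natural competitors --- kernels forcing vanishing on a set with more than one point, or vanishing to higher order, fail to be prime precisely because a product $fg$ can vanish there without either factor doing so --- and the bump functions are what let one divide inside $\B$ and thereby show that any admissible prime with one-point common zero set already contains all of $\mathfrak p_x$. This is where the specific features of each class (smoothness of a quotient by a bump, compactness of supports, Schwartz decay after multiplying by a Gaussian) enter, and handling all three at once is the delicate part.

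Finally, having produced $u$ with $Z(Tf)=u(Z(f))$ for all $f$, I would promote $u$ to a homeomorphism. Applying the same construction to $T^{-1}$ yields the inverse relation $Z(T^{-1}g)=u^{-1}(Z(g))$, so it suffices to check that $u$ and $u^{-1}$ are continuous. For $C(M,\F)$, $C^k(M,\F)$ and $\S_\F(n)$ every closed subset of $M$ occurs as some $Z(f)$ (for $\S$, multiply a bounded smooth function with the prescribed zero set by a Gaussian), so $u$ carries closed sets to closed sets in both directions and is a homeomorphism. For $C^k_c(M,\F)$ the zero sets are exactly the closed sets with relatively compact complement; writing a relatively compact open $U$ as $M\setminus Z(f)$ gives $u(U)=M\setminus Z(Tf)$, again relatively compact open, so $u$ is open, and symmetrically for $u^{-1}$. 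Since such $U$ form a basis of the (locally compact) manifold topology, $u$ is a homeomorphism, completing the proof.
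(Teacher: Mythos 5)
Your reduction of the proposition to a purely multiplicative/order-theoretic statement is clean as far as it goes: $T$ does indeed permute the proper absorbing prime subsets of $\B$, and your final passage from $Z(Tf)=u(Z(f))$ to continuity of $u$ is fine. But the entire content of the proposition sits in the step you explicitly defer --- that the point-kernels $\mathfrak p_x$ can be singled out among all proper absorbing primes by a condition stated only in terms of multiplication --- and the criterion you propose for it does not work. First, ``the common zero set of $P$ is a single point'' is not a multiplicative condition (it refers to points of $M$ and to zero sets, i.e.\ to exactly the data you are trying to transport), so it cannot be used to constrain $T(\mathfrak p_x)$. Second, $\mathfrak p_x$ is \emph{not} minimal among nonzero proper absorbing primes: take $M=\R$, $x_0=0$, and
\[ P=\bigl\{f\in\B : Z(f)\cap(0,\infty)\ \text{accumulates at}\ 0\bigr\}. \]
Since $Z(fg)=Z(f)\cup Z(g)$, this $P$ is absorbing and prime (a sequence of zeros of $fg$ tending to $0$ has a subsequence consisting of zeros of one factor); it is contained in $\mathfrak p_0$ by continuity; the containment is strict (e.g.\ $t^2e^{-t^2}$, or $t^2$ times a bump, lies in $\mathfrak p_0\setminus P$); and its common zero set is again exactly $\{0\}$. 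Variants (accumulation along a fixed sequence $t_n\downarrow 0$ refined by an ultrafilter, primes attached to ends of $M$ such as $\{f:Z(f)\ \text{unbounded}\}$, the prime $\{f:Z(f)\neq\emptyset\}$ sitting above every $\mathfrak p_x$ in $C(M)$) show that the poset of absorbing primes is far too rich for minimality, maximality, or the common zero set to isolate the point-kernels. With only the multiplicative semigroup structure available --- no addition --- I do not see how to repair this, and it is precisely the step where the paper works hardest.

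The paper instead locates points constructively. For $k<\infty$ it uses divisibility: the common divisor $h=f^2+g^2$ of $f^{4k+4}$ and $g^{4k+4}$ (note the essential use of addition, which your purely multiplicative framework forbids itself) and a three-factor splitting $f^{4k+4}=f_1f_2f_3$ with $Z(f_1)\cap Z(f_3)=\emptyset$, which together force $Z(Tf)$ to be a singleton whenever $Z(f)$ is. In general it uses Mrcun's characteristic sequences, $f_j\in C^k_c(M)$ with $f_jf_{j+1}=f_{j+1}$ and $\bigcap\supp(f_j)=\{x\}$: the defining relation is multiplicative, hence preserved by $T$, and the shrinking compact supports are what certify that the limit object is a single point. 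If you want to keep your prime-ideal framework, something equivalent to these characteristic sequences is the missing ingredient; without it the identification of $T(\mathfrak p_x)$ with some $\mathfrak p_y$ is unproven, and the proposed route to it is false.
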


We present two different proofs. The first only applies to
$\B=C^k(M)$ with $k<\infty$, and also in several 1-dimensional cases
for the other function families, which will be specified later. The
second proof is due to Mrcun \cite{Mrcun}, which applies in all
cases, with slight modifications for the cases  $\B=C^k_c(M,\F)$ and
$\B=\S_{\F}(n)$.

\subsection{The case of $\B=C^k(M,\F)$, $k<\infty$}
\begin{lem}\label{lem:zeros_divisible}
Let $f\in C^k(M)$. If $f(x_0)=0$, then there exists $h\in C^k(M)$
s.t. $Z(h)=\{x_0\}$, and $f^{4k+4}$ is divisible by $h$ in $C^k(M)$.
\end{lem}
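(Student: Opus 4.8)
The problem is local, so the plan is first to reduce the construction of $h$ to a neighborhood of $x_0$ and then to verify divisibility there. Away from $x_0$ any $h$ with $Z(h)=\{x_0\}$ causes no trouble: the quotient $f^{4k+4}/h$ is a ratio of $C^k$ functions with non-vanishing denominator, hence $C^k$ on $M\setminus\{x_0\}$. So it suffices to build a global $h\in C^k(M)$ that coincides with a convenient model near $x_0$ and is positive elsewhere. Concretely, I would fix a chart $\varphi$ carrying a neighborhood of $x_0$ to a ball in $\R^n$ with $\varphi(x_0)=0$, pick a cut-off $\chi\in C_c^\infty$ equal to $1$ near $x_0$ and supported in the chart, and set $h=\chi\,|\varphi|^2+(1-\chi)$. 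This $h$ is $C^\infty$ (hence $C^k$), equals $|\varphi|^2$ on a neighborhood $W$ of $x_0$, and satisfies $h\ge 1-\chi>0$ where $\chi<1$ and $h=|\varphi|^2>0$ on $\{\chi=1\}\setminus\{x_0\}$, so $Z(h)=\{x_0\}$. The whole statement then reduces to showing that $f^{4k+4}/|x|^2$ extends to a $C^k$ function near $x=0$ in the chart, where $|x|$ denotes the Euclidean norm of the coordinate.

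The engine is a flatness estimate for powers of $f$. Since $k\ge 1$ and $f(x_0)=0$, $f$ is $C^1$ and vanishes at $0$, so $|f(x)|\le C|x|$ on a compact coordinate neighborhood. Differentiating the product $f^{N}=f\cdots f$ ($N=4k+4$ factors) by the Leibniz rule, every term of $\partial^\beta(f^{N})$ with $|\beta|\le k$ distributes at most $|\beta|$ derivatives among the factors, leaving at least $N-|\beta|$ undifferentiated copies of $f$; each such copy contributes a factor $O(|x|)$ while the differentiated factors are bounded continuous derivatives. This yields $|\partial^\beta(f^{N})(x)|\le C_\beta\,|x|^{\,N-|\beta|}$ for all $|\beta|\le k$. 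On the other side, $|x|^{-2}$ is homogeneous of degree $-2$ in the coordinate, so $|\partial^\gamma(|x|^{-2})|\le C_\gamma\,|x|^{-2-|\gamma|}$.

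Combining the two bounds through the Leibniz rule gives, for every $|\alpha|\le k$, the estimate $|\partial^\alpha(f^{N}|x|^{-2})(x)|\le C_\alpha\,|x|^{\,N-2-|\alpha|}$, and since $N-2-|\alpha|\ge (4k+4)-2-k=3k+2\ge 1$, each such derivative tends to $0$ as $x\to 0$. Thus on the punctured neighborhood the quotient $g:=f^{N}/|x|^2$ is $C^k$ with every partial of order $\le k$ admitting a continuous extension (by the value $0$) across $x_0$. I would then invoke the standard removable-singularity criterion: if a function is $C^k$ off a single point and all its partials up to order $k$ extend continuously to that point, then it is $C^k$ across the point. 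This is proved by induction on the order using the fundamental theorem of calculus along coordinate segments through $x_0$; the base case $k=1$ recovers the missing first derivatives from the continuous extensions of the $\partial_i g$, and the inductive step applies this to the $(k-1)$-st derivatives. It follows that $g\in C^k$ near $x_0$, hence $g=f^{4k+4}/h\in C^k(M)$ globally, i.e. $h$ divides $f^{4k+4}$.

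The main obstacle is precisely the regularity of the quotient at $x_0$: a mere size bound $|f|\le C|x|$ is not enough, since $C^k$-membership is a statement about $k$ continuous derivatives, and one must control the negative powers of $|x|$ produced by differentiating $|x|^{-2}$. The whole point of raising $f$ to a high power is to make $f^{N}$ flat enough at $x_0$ that, after dividing by $|x|^2$ and differentiating up to $k$ times, the worst remaining negative power of $|x|$ is still beaten; the exponent $4k+4$ is comfortably larger than the $k+3$ that my estimate actually requires, so it leaves ample room and absorbs the crude constants in the Leibniz bookkeeping. I would also flag that the argument uses $f\in C^1$, so it is genuinely a $k\ge1$ statement; for $k=0$ the quotient need not even be continuous, and one must instead use the alternative (Mrcun) route referenced in the text.
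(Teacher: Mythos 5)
Your construction is sound for $1\le k<\infty$ and is genuinely different from the paper's: you fix the local model $h=|x|^2$ in a chart (so $h$ is independent of $f$) and prove divisibility via the flatness estimate $|\partial^\beta(f^N)|\le C|x|^{N-|\beta|}$, which hinges on $|f(x)|\le C|x|$, i.e.\ on $f\in C^1$. The Leibniz bookkeeping, the exponent count $N-2-|\alpha|\ge 3k+2>0$, and the removable-singularity criterion are all correct as stated. The paper instead takes $h=|f|^2+\rho$ with $\rho\ge 0$ vanishing (to order $k$) only at $x_0$; the whole point of that choice is the lower bound $h\ge|f|^2$, which lets one estimate every term of $\partial^\alpha\bigl(f^{4k+4}/h\bigr)$ by a positive power of $|f(x)|$ itself rather than of $|x|$, with no appeal to a chart, a norm, or any differentiability of $f$.

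The genuine gap is the case $k=0$, which you explicitly concede but wrongly wave off. The lemma sits in the subsection covering $0\le k<\infty$, Proposition \ref{prop-dima} is stated for $0\le k<\infty$ and invokes this lemma, and the $k=0$ instance is exactly what feeds (via the first proof of Proposition \ref{prop-compact}) into Theorem \ref{thm-main:Cmfld} about $C(M,\R)$. Your claim that ``for $k=0$ the quotient need not even be continuous'' is true only for \emph{your} $h$: a continuous $f$ with $f(x_0)=0$ can tend to $0$ arbitrarily slowly, so $f^4/|x|^2$ can blow up. But the lemma only asserts the existence of \emph{some} $h$, and the paper's $f$-dependent choice delivers it at $k=0$ as well: $|f^4/(|f|^2+\rho)|\le|f|^2\to 0$, while at points where $f=0$ but $x\ne x_0$ the denominator $\rho$ is positive, so the quotient is continuous everywhere. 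So the statement is not ``genuinely a $k\ge1$ statement,'' and deferring $k=0$ to the Mrcun route is not necessary; to close your proof you should either restrict your argument to $k\ge1$ and supply the paper's adaptive denominator for $k=0$, or adopt $h=|f|^2+\rho$ throughout.
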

\begin{proof} Fix  $\rho\in C^k(M)$ which is non-negative,
with $J\rho(x_0)=0$, and $\rho(x)>0$ for $x\neq x_0$. Take
$h=|f|^2+\rho$. It is then easy to see (Say, by induction) that
$f^{4k+4}/h\in C^k(M)$, and we are done. \end{proof}

\begin{rem}\label{rem:RandS1-I} If $M=\R$ or $M=S^1$, this also holds for
$k=\infty$ (with $f$ instead of $f^{4k+4}$): simply take $h(x)=x$
for $M=\R$, $x_0=0$. Note that the statement is local, so it applies
to $S^1$ as well. If in addition all functions are required to
belong to $\S(1)$, one can construct $h\in\S(1)$ with the required
property. Since those constructions only apply in the 1-dimensional
case, while the corresponding $C^\infty$ results hold in all
dimensions and will be proven differently, we omit the details.
\end{rem}

\begin{cor}\label{lem:dima}
Let $f\in C^k(M)$ and $x,y\in M$, $x\neq y$ s.t. $f(x)=f(y)=0$. One
can then represent $f^{4k+4}=f_1f_2f_3$ with $f_j\in C^k(M)$ s.t.
$f_1(x)=f_3(y)=0$, and $Z(f_1)\cap Z(f_3)=\emptyset$.
\end{cor}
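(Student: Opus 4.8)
The plan is to apply Lemma \ref{lem:zeros_divisible} separately at the two zeros $x$ and $y$ and then combine the two resulting factorizations into a single product of three terms. Concretely, since $f(x)=0$, I would invoke Lemma \ref{lem:zeros_divisible} to produce $h_1\in C^k(M)$ with $Z(h_1)=\{x\}$ and $f^{4k+4}/h_1\in C^k(M)$; since $f(y)=0$, I would likewise produce $h_3\in C^k(M)$ with $Z(h_3)=\{y\}$ and $f^{4k+4}/h_3\in C^k(M)$. I then set $f_1=h_1$ and $f_3=h_3$. Because $x\neq y$, the zero sets $Z(f_1)=\{x\}$ and $Z(f_3)=\{y\}$ are disjoint singletons, which immediately yields $f_1(x)=f_3(y)=0$ and $Z(f_1)\cap Z(f_3)=\emptyset$, so two of the three requirements hold by construction.

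It remains to define the middle factor and check it lies in $C^k$. I would set
\[
f_2:=\frac{f^{4k+4}}{h_1h_3},
\]
so that the identity $f^{4k+4}=f_1f_2f_3$ holds formally; the substance of the argument is to verify $f_2\in C^k(M)$. Since being $C^k$ is a local property, I would check it by covering $M$ with three open regions, exploiting the disjointness of $\{x\}$ and $\{y\}$. On a neighborhood of $x$ the function $h_3$ is nonvanishing, hence $1/h_3$ is $C^k$ there, and one writes $f_2=(f^{4k+4}/h_1)\cdot(1/h_3)$ as a product of two $C^k$ functions; symmetrically, near $y$ one writes $f_2=(f^{4k+4}/h_3)\cdot(1/h_1)$; and on the open set where both $h_1$ and $h_3$ are nonzero, $f_2$ is visibly $C^k$, being $f^{4k+4}$ divided by a nonvanishing $C^k$ function. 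These three regions cover $M$, so $f_2\in C^k(M)$.

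The only step requiring attention is this last local verification, and it works precisely because near $x$ only $h_1$ can vanish, with that vanishing already absorbed into $f^{4k+4}/h_1$ by the lemma, while $h_3$ stays bounded away from zero there, and vice versa near $y$. No potential singularity of the quotient sees both denominators at once, which is why no power higher than $f^{4k+4}$ is needed. I do not anticipate a genuine obstacle here: the corollary is essentially a bookkeeping consequence of applying Lemma \ref{lem:zeros_divisible} at each zero and using that distinct points have disjoint singleton zero sets.
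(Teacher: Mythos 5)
Your proposal is correct and follows essentially the same route as the paper: apply Lemma \ref{lem:zeros_divisible} at each of the two zeros to get $f_1$ and $f_3$ with singleton disjoint zero sets, and observe that $f_1f_3$ divides $f^{4k+4}$ because at every point at most one of the two factors vanishes. The paper dismisses the divisibility of $f^{4k+4}$ by the product as ``obvious''; your three-region local verification is just that step written out in full.
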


\begin{proof}  Take, using Lemma \ref{lem:zeros_divisible} $f_1$ and $f_3$ such that
 $Z(f_1)=\{x\}$, $Z(f_3)=\{y\}$, and $f^{4k+4}$ is divisible by both $f_1$ and
$f_3$  in $C^k(M)$. It is then obvious that $(f_1f_3)$ divides
$f^{4k+4}$  in $C^k(M)$, since for all $z\in M$ either $f_1(z)\neq
0$ or $f_3(z)\neq 0$. Thus, $f_2=\frac{f^{4k+4}}{f_1f_3}\in C^k(M)$
is well-defined.
\end{proof}

\begin{lem}\label{lem:gcd}
Let $f,g\in C^k(M)$ s.t. $f(x)=g(x)=0$. Then one can find $h\in
C^k(M)$ with $h(x)=0$ s.t. both $f^{4k+4}$ and $g^{4k+4}$ are
divisible by $h$.
\end{lem}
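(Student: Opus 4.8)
The plan is to produce the common divisor $h$ by the same sum-of-squares-plus-flat-bump recipe used in Lemma \ref{lem:zeros_divisible}, but now symmetrized so that it simultaneously dominates $|f|^2$ and $|g|^2$. Concretely, I would first fix, exactly as in that proof, a non-negative $\rho\in C^k(M)$ with vanishing $k$-jet $J\rho(x)=0$ at the common zero and with $\rho(y)>0$ for all $y\neq x$. I then set
\[ h = |f|^2 + |g|^2 + \rho. \]
This $h$ lies in $C^k(M)$ (sums and products of $C^k$ functions are $C^k$, and $|f|^2 = f\bar f$ in the complex case, $f^2$ in the real case), it is non-negative, it satisfies $h(x)=|f(x)|^2+|g(x)|^2+\rho(x)=0$, and its zero set is exactly $\{x\}$ since $\rho$ is strictly positive off $x$.

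The content of the lemma is then the divisibility of $f^{4k+4}$ and of $g^{4k+4}$ by this single $h$ in $C^k(M)$. Away from $x$ there is nothing to check: $h>0$ there, so both quotients are manifestly $C^k$ on $M\setminus\{x\}$. The only issue is the $C^k$ regularity of $f^{4k+4}/h$ and $g^{4k+4}/h$ up to and across the point $x$, and this is precisely the technical estimate already established in the proof of Lemma \ref{lem:zeros_divisible}. The observation that makes it transfer verbatim is that in that proof the sole property of the denominator used, beyond $Z(h)=\{x\}$ and the flatness of $\rho$, was the pointwise lower bound $h\geq |f|^2$. Our new $h$ still satisfies $h\geq |f|^2$, because the extra summand $|g|^2$ is non-negative; hence $f^{4k+4}/h\in C^k(M)$ by the identical induction. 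By the symmetric bound $h\geq |g|^2$, the same argument yields $g^{4k+4}/h\in C^k(M)$.

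I expect the only genuinely delicate point to be this $C^k$ bound on the quotient near $x$, which is where the generous power $4k+4$ does its work: each of the up-to-$k$ derivatives of $1/h$ can create factors of the form $(\pt h)/h$ that blow up as one approaches $x$, and the surplus powers in the numerator are exactly what is needed to absorb them and keep all derivatives continuous. Since this estimate is carried out once and for all in Lemma \ref{lem:zeros_divisible}, and since adjoining $|g|^2$ only increases $h$ and therefore only improves every such bound, nothing new has to be proven here. The resulting $h$ has $h(x)=0$ and divides both $f^{4k+4}$ and $g^{4k+4}$, which is the assertion.
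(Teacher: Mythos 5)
Your construction is essentially the paper's: its proof simply takes $h=f^2+g^2$ and verifies divisibility by induction, which is exactly the sum-of-squares argument you give. The extra flat term $\rho$ you add is harmless but not needed here, since the lemma only asks that $h(x)=0$ rather than $Z(h)=\{x\}$; conversely, your use of $|f|^2=f\bar f$ in place of $f^2$ is the correct reading when $\F=\C$.
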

\begin{proof}  Take $h=f^2+g^2$, and verify divisibility by induction. \end{proof}

We denote by $gcd(f^{4k+4},g^{4k+4})$ the family of all such
functions $h$.

\begin{rem}\label{rem:RandS1-II}  Again, if $M=\R$ or $M=S^1$, this also holds for
$k=\infty$, with $f, g$ instead of $f^{4k+4}, g^{4k+4}$, since if
the zero is assumed to be, say, at the point $0$, then  $h(x)=x$ is
a common divisor. \end{rem}

We next prove that there is a function $u:M\to M$ which governs the
behavior of zero-sets of functions under the transform $T:C^k(M)\to
C^k(M)$.

\begin{prop}\label{prop-dima}
Let $0\le k< \infty$ and let $M$ be a $C^k$ manifold. Assume
$T:C^k(M)\rightarrow C^k(M)$ is a multiplicative bijection. Then
there exists a homeomorphism $u:M\to M$ such that $Z(Tf)=u(Z(f))$
for all $f\in C^k(M)$.
\end{prop}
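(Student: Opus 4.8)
The plan is to prove Proposition \ref{prop-dima} (the case $0\le k<\infty$, $\B = C^k(M)$), building on the algebraic lemmas just established. The central idea is that multiplicativity lets us recover the zero set $Z(f)$ as an intrinsic algebraic object, so that $T$ must carry zero sets to zero sets. The key is to characterize the single-point condition ``$x_0 \in Z(f)$'' purely in terms of divisibility relations in $C^k(M)$ that $T$ preserves.

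First I would set up the basic dictionary between $T$ and divisibility. Since $T$ is a multiplicative bijection, it respects products, so if $g = f h$ then $Tg = Tf \cdot Th$; hence divisibility is preserved in the sense that $f \mid g$ implies $Tf \mid Tg$, and by applying $T^{-1}$ the converse holds too. I would also record that $T$ must fix the constant functions $0$ and $1$ in the appropriate sense (e.g.\ $T(1)=1$ since $1$ is the multiplicative identity, and the function identically $0$ is the unique absorbing element), and that $T$ permutes units (nowhere-vanishing functions), since $f$ is a unit iff $f \mid 1$. This already shows $Z(Tf)=\emptyset$ exactly when $Z(f)=\emptyset$.

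Next I would localize to single points. The real content is to express membership $x \in Z(f)$ via an algebraic/order-theoretic property invariant under $T$. Using Lemma \ref{lem:zeros_divisible}, for each $x_0$ with $f(x_0)=0$ there is an $h$ with $Z(h)=\{x_0\}$ dividing a power $f^{4k+4}$; using Lemma \ref{lem:gcd} and Corollary \ref{lem:dima} I can compare zero sets of different functions through common divisors and factorizations. I would define, for each point $x \in M$, the collection of functions vanishing at $x$, and show that the ``points'' of $M$ are recoverable as certain maximal or minimal configurations in the divisibility lattice of these $\gcd$ families --- concretely, $x$ and $y$ are distinct points iff one can factor a power $f^{4k+4}=f_1f_2f_3$ separating them as in Corollary \ref{lem:dima}, whereas the single-point zero sets $Z(h)=\{x\}$ are the ``atoms'' that cannot be separated. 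Since $T$ preserves all divisibility, common-divisor, and factorization relations, it must send the atom indexed by $x$ to an atom indexed by some point, and this assignment defines the candidate map $u:M\to M$ by $Z(Th)=\{u(x)\}$ whenever $Z(h)=\{x\}$. Well-definedness follows because the algebraic characterization of $x$ does not depend on the particular $h$ chosen.

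Having defined $u$, I would show $Z(Tf)=u(Z(f))$ for general $f$ and that $u$ is a homeomorphism. For a general $f$, I would argue $x\in Z(f)$ iff the atom $h_x$ (with $Z(h_x)=\{x\}$) divides some power $f^{N}$, which by the lemmas is equivalent to $f$ vanishing at $x$; applying $T$ and using preservation of divisibility gives $u(x)\in Z(Tf)$, and the reverse inclusion comes from running the same argument for $T^{-1}$. Bijectivity of $u$ is immediate since $T^{-1}$ yields an inverse assignment; the harder part is continuity of $u$ and $u^{-1}$. The main obstacle I expect is precisely this topological step: translating the purely algebraic bijection on points into a \emph{homeomorphism}, since the divisibility lattice a priori only gives a set map. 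I would handle this by producing, near any point and for nested neighborhoods, functions whose zero sets are controlled (e.g.\ vanishing exactly on a closed neighborhood, or positive bump-type constructions), and using that $T$ preserves the inclusion order of zero sets $Z(f)\subseteq Z(g) \iff g^{4k+4}\in \gcd$-type divisibility constraints; continuity then follows because $u$ preserves the closure/neighborhood structure encoded by these nested zero sets. This is where the manifold (local Euclidean) structure and the $C^k$, $k<\infty$ hypothesis get used, the latter guaranteeing via Lemma \ref{lem:zeros_divisible} that single-point zero sets are realizable, which is exactly what fails for $k=\infty$ in higher dimensions and forces the alternative proof referenced in the text.
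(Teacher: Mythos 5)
Your proposal is correct and follows essentially the same route as the paper: preservation of divisibility gives the empty/nonempty and common-zero steps, Corollary \ref{lem:dima} forces single-point zero sets to map to single-point zero sets (defining $u$), Lemma \ref{lem:zeros_divisible} upgrades this to $Z(Tf)=u(Z(f))$, and continuity comes from bump functions whose zero sets are complements of charts. Your continuity step is stated more vaguely than the paper's, but the precise version is exactly what you sketch: for a chart $U$ pick $f$ with $Z(f)=M\setminus U$, so $u(U)=M\setminus Z(Tf)$ is open, and likewise for $u^{-1}$.
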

\begin{proof} \noindent{\em Step 1.} For $f\in C^k(M)$,
$Z(f)=\emptyset$ if and only if $Z(Tf)=\emptyset$.
\\Simply note that if
$Z(f)=\emptyset$ then $g=1/f\in C^k(M)$. Thus $(Tf)(Tg)=T(fg)=T(1)$,
and obviously $T(1) = 1$, so that $Z(Tf)=\emptyset$. For the reverse
implication, consider $T^{-1}$, which is multiplicative as well.

\noindent {\em Step 2.} For $f,g\in C^k(M)$ we have that   $Z(f)\cap
Z(g)\neq\emptyset$, if and only if  $Z(Tf)\cap Z(Tg)\neq\emptyset$.
\\ Take by Lemma \ref{lem:gcd} $h\in gcd(f^{4k+4},g^{4k+4})$. Denote $f^{4k+4}=vh$ and
$g^{4k+4}=wh$. Therefore $Tf^{4k+4}=TvTh$ and $Tg^{4k+4}=TwTh$. By
assumption, $Z(h)\neq\emptyset$, and therefore $\emptyset\neq
Z(Th)\subset Z(Tf)\cap Z(Tg)$.
\\For the reverse implication, consider $T^{-1}$.

 \noindent {\em Step 3.} There exists an invertible map
$u:M\rightarrow M$ such that $Z(f)=\{x\}$ implies  $Z(Tf)=\{u(x)\}$.
\\Assume $Z(f)=\{x\}$. By (1), $Z(Tf)\neq\emptyset$. If $y,z\in Z(Tf)$
and $y\neq z$, apply the previous lemma: write $Tf^{4k+4}=g_1g_2g_3$
with $g_1(x)=g_3(z)=0$, $Z(g_1)\cap Z(g_3)=\emptyset$. By
bijectivity of $T$, $g_j=T(f_j)$. Thus $f^{4k+4}=f_1f_2f_3$. By step
2, $f_1$ and $f_3$ have no common zeros, while by step 1 both $f_1$
and $f_3$ have zeros. Thus $f$ has at least two zeros, a
contradiction. We conclude that $Tf$ has a unique zero. If
$Z(f)=Z(g)=\{x\}$,
 by  step 2 $Tf$ and $Tg$ have a common (and by above unique) zero, thus $Z(Tf)=Z(Tg)$,
 i.e. $Z(Tf)=\{u(x)\}$ for some $u:M\rightarrow M$. Note that $u$
 must be invertible by observing $T^{-1}$.

 \noindent {\em Step 4.}  For $f\in C^k(M)$, $x\in M$, one has
$f(x)=0$ if and only if $Tf(u(x))=0$, i.e. $Z(Tf)=u(Z(f))$.
\\Take $f$ such that $f(x)=0$, and using Lemma \ref{lem:zeros_divisible} take $h$ dividing $f^{4k+4}$ with $Z(h)=\{x\}$.
Denote  $f^{4k+4}=hv$. Then $Tf^{4k+4}=ThTv$, and since $Th(u(x))=0$
by step 3, one has $Tf(u(x))=0$. For the reverse implication,
consider $T^{-1}$.

 \noindent {\em Step 5.} The map $u:M\to M$ is a homeomorphism.
\\For a chart $\R^n\simeq U\subset M$, take a $C^k$ function
$f$ with $Z(f)=M\setminus U$. Then $u(B)=M\setminus Z(T(f))$ is an
open set. Similarly, the preimage of a chart is open. Since the
charts form a basis of the topology, images and preimages by $u$ of
open sets are open. Therefore, $u$ is a homeomorphism. \end{proof}

\begin{rem} The construction of a homeomorphism with the
property as in Lemma \ref{prop-dima} does not extend to the general
$C^\infty(M)$ case. Nevertheless, it can be carried out when $M=\R$
or $M=S^1$ by Remark \ref{rem:RandS1-I} and Remark
\ref{rem:RandS1-II}.
\end{rem}

\subsection{The cases of $\B=C^k(M,\F)$, $C^k_c(M,\F)$ and $\S_{\F}(n)$, $0\leq k\leq\infty$}
We use the construction of $u$ from \cite{Mrcun} that is used in
\cite{semrl-mrc}. Some attention should be paid when repeating it
for the different families of functions, and we do it in full detail
for the convenience of the reader.

\begin{prop}\label{mrcun} Let $\B$ be a multiplicatively closed family of
functions s.t. $C^k_c(M)\subset \B\subset C^k(M)$. Let $T:\B\to\B$
be a multiplicative bijection, which restricts to a bijection of
$C^k_c(M)$. Then there exists a homeomorphism $u:M\to M$ s.t.
$u(Z(f))=Z(Tf)$ for all $f\in\B$.
\end{prop}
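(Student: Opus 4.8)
The plan is to build the homeomorphism $u$ entirely from the multiplicative monoid of compactly supported functions, and then propagate the zero-set correspondence to all of $\B$. The first reduction I would make is to observe that $C_c^k(M)$ is a multiplicative ideal in $C^k(M)$ — a product of a compactly supported $C^k$ function with any function from $\B\subseteq C^k(M)$ is again in $C_c^k(M)$ — and that by hypothesis $T$ restricts to a multiplicative bijection of $C_c^k(M)$. Since $0\cdot f=0$ for every $f$ and $T$ is a surjection of $C_c^k(M)$, the identity $T(0)=T(0)\,T(f)$ forces $T(0)=0$; hence the relation $fg=0$ (equivalently $\{f\neq0\}\cap\{g\neq0\}=\emptyset$) is preserved by $T$ and by $T^{-1}$.

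The heart of the matter is to encode the topology of $M$ through purely multiplicative relations that $T$ must respect, and crucially to do so without any appeal to divisibility of powers, which is exactly what broke down for $k=\infty$ in Proposition \ref{prop-dima}. The key relation I would introduce is, for $f,g\in C_c^k(M)$,
\[ f\prec g \quad:\Longleftrightarrow\quad \exists\, e,h\in C_c^k(M):\ ef=f \ \text{ and }\ gh=e. \]
This is written using only products, so it is automatically preserved by the multiplicative bijection $T$ in both directions. Its geometric content is precisely support containment: $f\prec g$ if and only if $\supp f\subseteq\{g\neq 0\}$. Indeed, $ef=f$ forces $e\equiv 1$ on $\{f\neq0\}$ and hence on $\supp f$, so $gh=e$ gives $g\neq 0$ on $\supp f$; conversely, given $\supp f\subseteq\{g\neq 0\}$ one takes a bump $e\equiv 1$ on $\supp f$ with $\supp e\subseteq\{g\neq 0\}$ and sets $h=e/g$ on $\{g\neq 0\}$ and $h=0$ elsewhere. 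The essential point is that here one only ever divides by $g$ where $g\neq 0$, so the construction is completely insensitive to the order of vanishing, and is valid uniformly for all $1\le k\le\infty$ and over both $\R$ and $\C$.

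Next I would reconstruct the space from this data. The cozero sets $\{f\neq0\}$, $f\in C_c^k(M)$, form a basis of relatively compact open subsets of $M$, closed under finite intersection since $\{fg\neq0\}=\{f\neq0\}\cap\{g\neq0\}$; the relation $\prec$ descends to the ``way-below'' relation $\overline{\{f\neq0\}}\subseteq\{g\neq0\}$ on this basis, while $fg=0$ records disjointness. Because $M$ is locally compact Hausdorff, its topology is determined by this basis together with the way-below relation, and the point-filters $\{g:\ g(x)\neq0\}$ are recovered as the appropriate maximal families. Since $T$ preserves all of this structure, it induces a bijection $u:M\to M$; checking that $u$ and $u^{-1}$ carry basic open sets to open sets (exactly as in Step 5 of Proposition \ref{prop-dima}, using bumps whose cozero set is a coordinate ball) shows $u$ is a homeomorphism, and by construction $\{Tf\neq0\}=u(\{f\neq0\})$, i.e.\ $Z(Tf)=u(Z(f))$, for all $f\in C_c^k(M)$.

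Finally I would extend the correspondence from $C_c^k(M)$ to all of $\B$. Given $f\in\B$ and $x\in M$, choose $g\in C_c^k(M)$ with $g(x)\neq0$; then $fg\in C_c^k(M)$ and $T(fg)=Tf\cdot Tg$, so $Z(T(fg))=Z(Tf)\cup Z(Tg)$. Applying the case already proved to $fg$ and to $g$, and using that $u(x)\notin Z(Tg)$ because $x\notin Z(g)$, one reads off that $f(x)=0$ if and only if $u(x)\in Z(Tf)$; hence $Z(Tf)=u(Z(f))$ for every $f\in\B$. I expect the main obstacle to be the reconstruction step: turning the $T$-invariant relations $\prec$ and $fg=0$ into an honest point map and proving it is a homeomorphism uniformly in $k$ (including $k=\infty$) and for the spaces $\S_\F(n)$, where one must check that the relevant bumps and local units lie in the prescribed family. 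The verification of the formula for $\prec$ and of the bump-extension, by contrast, is routine and identical in the real and complex cases.
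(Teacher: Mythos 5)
Your proof is correct, and it reaches the conclusion by a route whose mechanics differ from the paper's, which follows Mrcun: there one fixes at each $x$ a \emph{characteristic sequence} $f_j\in C^k_c(M)$ with $f_jf_{j+1}=f_{j+1}$ and $\bigcap\supp(f_j)=\{x\}$, shows by an interleaving argument (pulling back a characteristic sequence at a candidate limit point through $T^{-1}$) that $\bigcap\supp(Tf_j)$ is a single point $y=u(x)$, then proves that $Tf(u(x))$ depends only on the germ $f_x$ and uses local units to get $f(x)\neq0\iff Tf(u(x))\neq0$. Both arguments rest on the same multiplicative encoding of support containment ($ef=f$ iff $e\equiv1$ on $\supp f$), and both deliberately avoid the divisibility-of-powers device of Proposition \ref{prop-dima}, so both work uniformly for $0\le k\le\infty$ and for $\F=\R$ or $\C$. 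What you do differently is to replace countable characteristic sequences by the single $T$-invariant binary relation $f\prec g$ and to recover points as maximal multiplicative filters; your extension step to all of $\B$ via $Z(T(fg))=Z(Tf)\cup Z(Tg)$ is also cleaner than the paper's germ/local-unit argument. The one place you should add detail is the reconstruction, which you correctly flag as the main obstacle but leave loose: say explicitly that the candidate families are the subsets $\mathcal G\subset C^k_c(M)$ that are closed under products, omit $0$, and satisfy the interpolation property that for every $f\in\mathcal G$ there is $g\in\mathcal G$ with $g\prec f$. Then closure under products gives the finite intersection property of the cozero sets, compactness of supports gives $\bigcap_{g\in\mathcal G}\supp g\neq\emptyset$, interpolation shows that any $x$ in this intersection satisfies $\mathcal G\subset\{f:f(x)\neq0\}$, and disjoint bumps show that the families $\{f:f(x)\neq0\}$ are precisely the maximal ones and are pairwise incomparable. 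All of these defining properties are expressed through products, the element $0$, and the relation $\prec$, hence are preserved by $T$ and $T^{-1}$, so $u$ is well defined; the remainder of your argument is sound as written.
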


\noindent{\bf Proof.} \noindent {\em Step 1}. Recall the notion of a
characteristic sequence of functions $f_j$ at a point $x\in M$: this
a sequence $f_j\in C^k_c(M)$ s.t. $f_jf_{j+1}=f_{j+1}$, and $\bigcap
\supp(f_j)=\{x\}$. Fix some $x\in M$ and a characteristic sequence
of functions for it, $f_j$. By our assumptions, $g_j=T(f_j)$ satisfy
$g_jg_{j+1}=g_{j+1}$, so $\supp(g_{j+1})\subset \supp(g_j)$, and
those are also compact sets, so $K=\bigcap \supp(g_j)\neq\emptyset$.
We want to show that this intersection is in fact a single point.
Fix $y\in K$, a neighborhood $U$ of $y$, and choose a characteristic
sequence $\beta_j$ at $y$ with $\supp(\beta_1)\subset U$. Take
$\alpha_j=T^{-1}\beta_j$. Then $\gamma_j=f_j\alpha_j$ has compact
support and satisfies $\gamma_{j+1}=\gamma_j\gamma_{j+1}$, so
$\bigcap \supp(\gamma_j)\neq \emptyset$, but $\supp(\gamma_j)\subset
\supp(f_j)$, so $\bigcap \supp(\gamma_j)=\{x\}$ and $\gamma_j$ is a
characteristic family at $x$. In particular, $\gamma_1 f_j=f_j$ for
large $j$, and applying $T$, $\beta_1g_1g_j=g_j$. So
$\supp(g_j)\subset \supp(\beta_1)\subset U$ for large $j$. This
holds for every $U$, implying $\bigcap \supp(g_j)=\{y\}$. We claim
that $y$ depends only on $x$ and not on the choice of characteristic
sequence $f_j$. Assuming $\tilde f_j$ is another such sequence,
$f_j\tilde f_j$ is also a characteristic sequence at $x$, so if
$\bigcap \supp(Tf_j)=\{y\}$, $\bigcap \supp(T\tilde f_j)=\{z\}$ and
$y\neq z$ then $\bigcap \supp(Tf_jT\tilde f_j)\subset\{y\}\cap
\{z\}=\emptyset$, a contradiction. We thus define the map $u:M\to M$
by $u(x)=y$. By bijectivity of $T$, it is obvious that $u$ is also
bijective.

\noindent {\em Step 2}. We next claim that for all $f\in \B$, $x\in
M$, $T(f)(u(x))$ depends only on the germ $f_x$. Indeed, assume
$f_x=g_x$. Take a characteristic sequence $\phi_j$ at $x$. Then for
some large $j$, $\phi_j f=\phi_j g$, so
$T(\phi_j)T(f)=T(\phi_j)T(g)$. By construction of $u$,
$T(\phi_j)\equiv 1$ in a neighborhood of $u(x)$, so
$T(f)(u(x))=T(g)(u(x))$.

\noindent {\em Step 3} Take $f\in\B$ s.t. $f(x)\neq 0$. Choose
$g\in\B$ s.t. $(fg)_x = 1_x$ (which can be done since
$C^k_c(M,\F)\subset\B$). Then $T(f)(u(x))T(g)(u(x))=T(1_x)(u(x))$.
Now $T(f_x)(u(x))T(1_x)(u(x))=T(f_x)(u(x))$ by multiplicativity for
any germ $f_x$, implying by surjectivity of $T$ that
$T(1_x)(u(x))\neq 0$ (in fact, since $1_x^2=1_x$, we immediately
conclude that $T(1_x)(u(x))=1$). Thus $T(f)(u(x))\neq 0$. By
considering $T^{-1}$, we get $f(x)\neq 0\iff Tf(u(x))\neq 0$, as
required. Finally, $u$ is a homeomorphism by Step 5 of the proof of
Proposition \ref{prop-dima}.$\hfill \square$

\begin{rem}
It follows from step 2 that in fact $T$ maps germs of functions at
$x$ to germs of functions at $u(x)$. This is also an immediate
consequence of Proposition \ref{prop-compact}, as will be seen in
the next section.
\end{rem}

\begin{cor}
Let $\B$ be a multiplicatively closed family of functions s.t.
$C^k_c(M)\subset \B\subset C^k(M)$ and for all $f\in\B$, $\{x\in
M:f(x)=1\}\subset M$ is compact. Let $T:\B\to\B$ be a multiplicative
bijection. Then there exists a homeomorphism $u:M\to M$ s.t.
$u(Z(f))=Z(Tf)$ for all $f\in\B$.
\end{cor}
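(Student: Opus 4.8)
The plan is to reduce this corollary to Proposition \ref{mrcun}: I would show that the standing hypothesis on $\B$ (that $\{x:f(x)=1\}$ is compact for every $f\in\B$) forces any multiplicative bijection $T:\B\to\B$ to restrict to a bijection of $C^k_c(M)$. Once that is in place, Proposition \ref{mrcun} applies verbatim and produces the desired homeomorphism $u$ with $u(Z(f))=Z(Tf)$ for all $f\in\B$. So the whole task is to verify the one extra hypothesis of Proposition \ref{mrcun} that is not assumed here, namely that $T$ preserves $C^k_c(M)$.

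The heart of the argument is a purely multiplicative description of compact support inside $\B$. I claim that for $f\in\B$ one has $f\in C^k_c(M)$ if and only if there exists $g\in\B$ with $gf=f$. For the forward direction, if $\supp f$ is compact I choose, using $C^k$ bump functions (available since $C^k_c(M)\subset\B$), a function $g\in C^k_c(M)\subset\B$ with $g\equiv 1$ on a neighborhood of $\supp f$, so that $gf=f$. For the converse, suppose $g\in\B$ satisfies $gf=f$. Then $g(x)=1$ at every point where $f(x)\neq 0$, so $\{f\neq 0\}\subset\{x:g(x)=1\}$, and since the latter set is closed we get $\supp f\subset\{x:g(x)=1\}$. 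By the hypothesis on $\B$ this level set is compact, and a closed subset of a compact set is compact; hence $\supp f$ is compact, i.e. $f\in C^k_c(M)$. Only this converse uses the compactness assumption on $\{f=1\}$, and it is exactly the place where that assumption does its work: it upgrades ``$g\equiv 1$ on $\supp f$'' to ``$\supp f$ compact''.

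With this characterization in hand I would transport it through $T$. Since $T$ is multiplicative, $gf=f$ implies $T(g)T(f)=T(f)$, and conversely injectivity of $T$ turns $T(gf)=T(g)T(f)=T(f)$ back into $gf=f$; thus $gf=f\iff T(g)T(f)=T(f)$. As $g$ ranges over $\B$, surjectivity of $T$ lets $T(g)$ range over all of $\B$, so the existence of some $g\in\B$ with $gf=f$ is equivalent to the existence of some $h\in\B$ with $hT(f)=T(f)$. Applying the characterization to both $f$ and $T(f)$ yields $f\in C^k_c(M)\iff T(f)\in C^k_c(M)$. Hence $T$ maps $C^k_c(M)$ into itself, and running the same reasoning for the multiplicative bijection $T^{-1}$ shows $T$ restricts to a bijection of $C^k_c(M)$.

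At this point all hypotheses of Proposition \ref{mrcun} are satisfied, and invoking it gives the homeomorphism $u$ with $u(Z(f))=Z(Tf)$ for all $f\in\B$, completing the proof. I expect the only genuinely new content to be the multiplicative characterization of $C^k_c(M)$ inside $\B$; the main point to get right is that this characterization is stated entirely in terms of the multiplication (an equation $gf=f$ with an existential quantifier over $\B$), which is precisely what makes it invariant under the multiplicative bijection $T$. Everything else is the routine invocation of Proposition \ref{mrcun}.
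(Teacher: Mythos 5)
Your proposal is correct and follows essentially the same route as the paper: both characterize $C^k_c(M)$ inside $\B$ by the multiplicative condition $\exists g\in\B:\ gf=f$ (using the compactness of $\{g=1\}$ for the converse), transport this through $T$ to see that $T$ restricts to a bijection of $C^k_c(M)$, and then invoke Proposition \ref{mrcun}.
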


\noindent{\bf Proof.} By the Proposition above, it only remains to
verify that $T$ restricts to a bijection of $C_c^k(M)$. Observe that
\[f\in C^\infty_c(M)\iff fg=f\mbox{ for some }g\in \B\]
Indeed, if $f\in C^\infty_c(M)$, just choose any $g\in C_c^k(M)$
with $g\equiv 1$ on $\supp(f)$. In the other direction, if $fg=f$
with $g\in\B$, then $g\neq 1$ outside some compact set $K$, implying
$\supp(f)\subset K$. Since $T$ is multiplicative and bijective,
$fg=f\iff TfTg=Tf$, so $T$ restricts to a bijection of $C^k_c(M)$ as
required. $\hfill \square$

\begin{cor} For both $\B=C^k_c(M,\F)$ and $\B=\S_{\F}(n)$,
a multiplicative bijection $T:\B\to\B$ defines a homeomorphism
$u:M\to M$ s.t. $u(Z(f))=Z(Tf)$ for all $f\in\B$.
\end{cor}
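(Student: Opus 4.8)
The plan is to deduce this corollary directly from the preceding one, whose conclusion is precisely the desired statement; the only task is to verify its three hypotheses for each of the two families $\B=C^k_c(M,\F)$ and $\B=\S_\F(n)$. Two of these are immediate. Both families are multiplicatively closed (a product of two compactly supported $C^k$ functions is again compactly supported and $C^k$, and a product of two Schwartz functions is again Schwartz), and both satisfy the sandwich $C^k_c(M)\subset\B\subset C^k(M)$ --- trivially for $C^k_c(M,\F)$, and for $\S_\F(n)$ because on $M=\R^n$ with $k=\infty$ one has $C^\infty_c(\R^n)\subset\S_\F(n)\subset C^\infty(\R^n)$.

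The one substantive point is the remaining hypothesis: for every $f\in\B$ the level set $\{x\in M: f(x)=1\}$ is compact. For $\B=C^k_c(M,\F)$ this is immediate, since $f$ vanishes off its compact support, so $\{x: f(x)=1\}$ is a closed subset of $\supp(f)$ and hence compact. For $\B=\S_\F(n)$ I would use rapid decay: since $f(x)\to 0$ as $|x|\to\infty$, there is some $R$ with $|f(x)|<1/2$ for $|x|>R$, so $\{x: f(x)=1\}$ is contained in the closed ball of radius $R$; being also closed (as $f$ is continuous), it is compact by Heine--Borel.

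With all three hypotheses verified, the previous corollary supplies a homeomorphism $u:M\to M$ with $u(Z(f))=Z(Tf)$ for all $f\in\B$, which is the assertion. I expect no real obstacle here: the content lies entirely in recognizing that the level set $\{f=1\}$ --- rather than the support or the zero set --- is the correct compactness condition to check, and that this is forced by compact support in one case and by Schwartz decay in the other. The subtlety worth flagging is that the previous corollary is framed with the literal requirement that $\{f=1\}$ be compact (this is what lets $T$ restrict to a bijection of $C^k_c(M)$, via the algebraic characterization $f\in C^k_c\iff fg=f$ for some $g\in\B$), so one should confirm exactly this level-set criterion and not merely a weaker decay statement.
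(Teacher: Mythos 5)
Your proof is correct and is exactly the argument the paper intends: the paper's proof reads ``Simply apply the Corollary above,'' and your verification of the three hypotheses (multiplicative closedness, the sandwich $C^k_c(M)\subset\B\subset C^k(M)$, and compactness of $\{f=1\}$ via compact support in one case and Schwartz decay in the other) is precisely the routine checking that remark leaves to the reader. The paper's only additional comment is that for $\B=C^k_c(M,\F)$ one could also invoke Proposition \ref{mrcun} directly, since $T$ already acts on $C^k_c(M)$ there.
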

\noindent{\bf Proof.} Simply apply the Corollary above. In fact, for
$\B=C^k_c(M,\F)$ Proposition \ref{mrcun} applies immediately.
$\hfill \square$

\begin{rem} The family $\B=C^k(M,\F)$ does not satisfy all assumptions
automatically: it is not immediate that $T:C^k(M)\to C^k(M)$
preserves the subspace of compactly supported functions, which makes
the construction in \cite{Mrcun} slightly more involved. We do not
repeat here the proof in this case, since no details of the original
proof should be modified.
\end{rem}

\section{Real valued functions}\label{sec:therealcase}
In the following section, we describe the general form of $T:\B\to
\B$ for the real-valued function families $\B$ from Proposition
\ref{prop-compact}. We then separately treat the cases of $k=0$ and
$0<k\leq\infty$. The reader might want to review the notion of the
jet of a function before proceeding (see Appendix A). We will write
$\B^k$ instead of $\B$ for any of the families $C^k(M)$, $C^k_c(M)$,
and also $\S(n)$ if $k=\infty$. One always has $C^k_c(M)\subset
\B^k$. In the following, $f_x$ denotes the germ of $f$ at $x$.
\begin{prop}\label{propT_u123}
Given a multiplicative bijection $T:\B^k\to \B^k$ (with $0\leq k\leq
\infty$) there exists a homeomorphism $u$, given by Lemma
\ref{prop-compact}, such that letting $T_u$ be defined by
$T_u(f)=T(f)\circ u$, the new map $T_u:\B^k\to C(M, \R)$ has the
following properties:
\\(1) It is
multiplicative, that is,  $T_u(fg)=T_u(f)T_u(g)$
\\(2) It is local, namely $(T_uf)_x=(T_ug)_x$ when $f_x=g_x$. Moreover, $(T_uf)(x)=F(x,
J^kf(x))$.
\\(3) It is determined by its action on non-negative functions, namely
\[T_u(f)(x)=\left\{\begin{array}{ll}0,& f(x)=0\\
T_u(|f|)(x) sgn(f(x)),& f(x)\neq 0\end{array}\right.\]
\end{prop}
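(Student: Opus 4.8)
The plan is to establish each of the three properties in turn, using the homeomorphism $u$ from Proposition \ref{prop-compact} as the starting point, which already gives us $Z(Tf)=u(Z(f))$, equivalently $Z(T_uf)=Z(f)$. Property (1) is essentially immediate: since $T$ is multiplicative and $u$ is a fixed homeomorphism, we have $T_u(fg)=T(fg)\circ u=(Tf\cdot Tg)\circ u=(Tf\circ u)(Tg\circ u)=T_u(f)T_u(g)$, so composition with $u$ on the right preserves multiplicativity. I would state this in one line.

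For property (2), locality, the key observation is that if two functions $f,g$ have the same germ at $x$, then they agree on a neighborhood of $x$; I want to deduce that $T_uf$ and $T_ug$ have the same germ at $x$. The mechanism is that germ information should be recoverable multiplicatively. I would argue as in Step 2 of the proof of Proposition \ref{mrcun}: choose a characteristic sequence $\phi_j\in C^k_c(M)$ at $x$ (functions with $\phi_j\phi_{j+1}=\phi_{j+1}$ and shrinking supports to $\{x\}$), so that for large $j$ the product $\phi_j f=\phi_j g$ since $f$ and $g$ agree near $x$. Applying multiplicativity of $T_u$ and using that $T_u(\phi_j)\equiv 1$ near $x$ (which follows because $Z(T_u\phi_j)=Z(\phi_j)$ and, as in Proposition \ref{mrcun} Step 3, the value on the nonzero set of an idempotent-like germ is forced to be $1$), we get $T_uf$ and $T_ug$ agreeing near $x$. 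The fact that $(T_uf)(x)$ depends only on $f$ through its $k$-jet $J^kf(x)$, i.e.\ $(T_uf)(x)=F(x,J^kf(x))$ for some function $F$, is the part I expect to require the most care: locality only gives dependence on the germ, so I must upgrade ``germ'' to ``$k$-jet.'' The plan is to show that if $J^kf(x)=J^kg(x)$ then $(T_uf)(x)=(T_ug)(x)$; this is where the jet machinery of Appendix A and the divisibility lemmas (e.g.\ Lemma \ref{lem:zeros_divisible}) come in, writing the difference $f-g$ as a function vanishing to high order and absorbing it multiplicatively.

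For property (3), the sign rule, I would first treat non-negative functions. For $f\ge 0$ vanishing nowhere relevant, write $f=|f|$, and for general $f$ use the decomposition based on $f=|f|\cdot\mathrm{sgn}(f)$ where $\mathrm{sgn}(f)$ takes values $\pm 1$ off $Z(f)$. The crucial algebraic input is that $\mathrm{sgn}(f)^2=1$ off the zero set, so $T_u(\mathrm{sgn}(f))^2=T_u(1)=1$ off the corresponding set, forcing $T_u(\mathrm{sgn}(f))(x)\in\{+1,-1\}$ wherever $f(x)\neq 0$. Then $T_u(f)(x)=T_u(|f|)(x)\cdot T_u(\mathrm{sgn}(f))(x)$, and I must identify the sign factor with $\mathrm{sgn}(f(x))$ itself. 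This matching is forced by locality (property (2)) together with the requirement that $T_u$ respect the local sign structure: a function that is $+1$ near $x$ must map to something that is $+1$ near $u(x)$ after accounting for $T_u(1)=1$, and continuity of the image rules out the sign factor jumping. On $Z(f)$ the value is $0$ by the zero-set property $Z(T_uf)=Z(f)$. The main obstacle, as noted, is the refinement in (2) from germ-dependence to the explicit jet-dependence $F(x,J^kf(x))$, since that is where the smooth structure genuinely enters and the purely multiplicative/topological arguments no longer suffice.
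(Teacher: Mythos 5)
Your property (1) is fine, and your route to germ-locality via characteristic sequences (as in Step 2 of Proposition \ref{mrcun}) is a legitimate alternative to the paper's argument, which instead takes a bump function $h$ with $Z(h)=M\setminus B$ for a ball $B\subset V$, multiplies by it, and uses $Z(Th)=M\setminus u(B)$ to conclude that $f=g$ on $V$ forces $Tf=Tg$ on $cl(u(V))$ (the closure matters below). However, the step you yourself flag as the hard one --- upgrading ``depends on the germ'' to ``depends on the $k$-jet'' --- is genuinely missing, and the mechanism you propose does not work: you suggest ``writing the difference $f-g$ as a function vanishing to high order and absorbing it multiplicatively,'' but $T_u$ is multiplicative, not additive, so there is no way to feed the additive difference $f-g$ into it; the divisibility lemmas concern factorizations used for zero sets and give no control on values off the zero set. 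The paper's actual argument is different in kind: fix two open sectors $V_1,V_2$ with common vertex $x_0$ and $cl(V_1)\cap cl(V_2)=\{x_0\}$; given $f_1,f_2$ with $J^kf_1(x_0)=J^kf_2(x_0)$, Whitney's extension theorem produces a single $f_3\in C^k_c(M)$ equal to $f_j$ on $V_j$, and the closure version of locality then gives $T_uf_1(x_0)=T_uf_3(x_0)=T_uf_2(x_0)$. Without this (or an equivalent) idea, the existence of $F$ with $(T_uf)(x)=F(x,J^kf(x))$ is not established.

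There is a second gap in (3). From $(\mathrm{sgn}\,f)^2=1$ off $Z(f)$ you correctly get $T_u((-1)_x)=\pm 1_x$, but your claim that ``locality and continuity of the image rule out the sign factor jumping'' only shows the sign $\delta(x)$ is locally constant; it does not decide between $+1$ and $-1$. Indeed $Tf=|f|$ is multiplicative, local, and continuous, and sends $-1$ to $+1$, so some nondegeneracy hypothesis must enter. The paper first shows $f(x_0)>0$ implies $(T_uf)(x_0)>0$ by writing $f=g^2$ near $x_0$, and then observes that if $T_u(-1_x)=+1_x$ at some $x$, every function in the image of $T$ would be non-negative on the connected component of $u(x)$, contradicting surjectivity of $T$. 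You need to invoke surjectivity at exactly this point; as written, your argument for identifying the sign factor with $\mathrm{sgn}(f(x))$ is incomplete.
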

\begin{proof}
Part (1) is obvious. For part (2), observe:

\noindent {\em Step 1}. $T(0) = 0$. Immediate since $T(f)T(0)=T(0)$ for all $f$, and $T$ is bijective.

 \noindent {\em Step 2}. For any open set $V$, $f =
g$ on $V$ implies $Tf = Tg$ in $cl(u(V))$.
\\
Indeed, take any open ball $B\subset V$, and take a function $h$
with $Z(h)=M\setminus B$ (a bump function over $B$). We have $f\cdot
h = g\cdot h$ in $M$, and so $ T f \cdot T h = T g \cdot T h$ in $M$
and by Proposition \ref{prop-dima} $Z(Th)=M\setminus u(B)$, implying
$Tf=Tg$ in $u(B)$. This holds for all $u(B)\subset u(V)$; since $u$
is a homeomorphism, $Tf=Tg$ in $u(V)$, and by continuity in
$cl(u(V))$.

Put another way, we proved that the germ $(Tf)_{u(x)}$ only depends
on the germ $f_x$ of $f$ at $x$. We may write
$(Tf)_{u(x)}=T(f_x)_{u(x)}$, and $T(f)(u(x))=T_u(f)(x)=T_u(f_x)(x)$.
Thus we may compute $T_u(C_x)$ for the constant germ $C$ at $x$, even
if the constant function $C\notin \B$, by completing $C$ to a
compactly supported function away from $x$.

\noindent {\em Step 3}. $(Tf)(u(x))  =  F(x, J^kf(x))$ for some $F:
J^k \to \RR$.\\ Indeed, fix $x_0\in M$. Choose an open ball $U$
around $x_0$, and two open sectors $V_1,V_2\subset U$, having $x_0$
as a common vertex, and $cl(V_1)\cap cl(V_2)=\{x_0\}$. Given two
functions $f_1,f_2\in C(M)$, assume that $J^kf_1(x_0)=J^kf_2(x_0)$.
By Whitney's extension theorem, one can choose a $C^k_c(M)$ function $f_3$ that equals $f_j$ on $V_j$ for
$j=1,2$. Then $T_u(f_3)$ and $T_u(f_j)$ coincide on $cl(V_j)$,
and in particular, $Tf_1(u(x_0))=Tf_3(u(x_0))=Tf_2(u(x_0))$.
Therefore, $(T_uf)(x)=(Tf)(u(x))= F(x, J^kf(x))$. This completes the
proof of (2).

\noindent {\em Step 4}.  $T_u(-1_x)=((-1)^{\delta(x)})_x$ with
$\delta(x)\in\{0,1\}$ a locally constant function. Indeed, for any
germ $f_x$, $T_u(f_x)= T_u(f_x)T_u(1_x)$ so $T_u(1_x)\equiv 1_x$. Again
by multiplicativity,  $T_u(-1_x)^2=T_u(1_x)=1_x$, so
$T_u(-1_x)=((-1)^{\delta(x)})_x$ Finally, note that $T_u(-1_x)$ is the
germ of a continuous function to conclude $\delta(x)$ is locally
constant.

\noindent {\em Step 5}.  \[(T_uf)(x)=\left\{\begin{array}{ll}0,& f(x)=0\\
T_u(|f|)(x)T_u(sgn(f(x))),& f(x)\neq 0\end{array}\right.\] Indeed,
since by continuity, for any $x\in M$ such that $f(x)\neq 0$, $sgn
f$ is locally constant at $x$, and by step 2 $T_u(f_x)(x)=T_u(sgn f(x)
|f_x|)(x)$.

\noindent {\em Step 6}. The function $\delta(x)$ from step 4
satisfies $\delta(x) = 1$ for all $x$. Indeed, $f(x_0)>0$ implies
$(T_uf)(x_0)>0$: One can choose $g\in C_c^k(M)$ with $f(x)=g(x)^2$
for $x$ near $x_0$. Then $T_u(f)(x_0)=T_u(g)^2(x_0)>0$. If
$T_u(-1)(x)=+1$ for some $x$, it implies that
$(Tf)(u(x))=T(|f|)(u(x))$ is always non-negative on the connected
component of $u(x)$ in $M$, thus contradicting surjectivity of $T$.
Therefore, $T_u(-1_x)= -1_x$.
\end{proof}

From now on we work with $T_u$ instead of $T$, and only return to the
original $T$ when we show that $u$ is a $C^k$-diffeomorphism. Thus,
for now we cannot assume that the image of $T_u$ is $C^k$, but only
that it is continuous.

By part (3) of Proposition \ref{propT_u123}, we need to study our
transform only on non-negative functions.

\begin{lem}\label{lem:general_form}
Let $T_u$ satisfy the conclusion of Proposition \ref{propT_u123}.
Then there exists a global section of $(J^k)^*$, $c_k$, such that
for $f(x_0)>0$, \[T_u(f)(x_0)=\exp( \langle c_k(x_0) , J^k(\log
f)(x_0)\rangle)\] If
$k<\infty$, $c_k$ is a continuous global section.  If $k=\infty$,
it is locally finite dimensional and continuous, i.e.
every $x_0\in M$ has an open neighborhood $U$ such that $c_{\infty}=Q_n(c_n)$ in
$U$ for some finite $n$, and $c_n$ is a continuous section  of
$(J^n)^*$ over $U$.
\end{lem}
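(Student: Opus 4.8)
The plan is to solve, at each point $x_0\in M$, a Cauchy-type (additive) functional equation on the jet space and then to glue the pointwise solutions into a global continuous section. For a germ $f$ with $f(x_0)>0$, part (2) of Proposition \ref{propT_u123} shows that $T_u(f)(x_0)$ depends only on $J^kf(x_0)$, equivalently only on $J^k(\log f)(x_0)$, since where $f>0$ the $k$-jets of $f$ and of $\log f$ determine one another. By part (3) the image is positive, so I may define $\phi_{x_0}$ on the (for $k<\infty$, finite-dimensional) jet space $J^k_{x_0}$ by $\phi_{x_0}\big(J^k(\log f)(x_0)\big)=\log T_u(f)(x_0)$; every jet is realized, since any $j\in J^k_{x_0}$ is $J^k(\log f)(x_0)$ for $f=\exp(P)$ with $J^kP(x_0)=j$. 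Multiplicativity of $T_u$, together with $\log(fg)=\log f+\log g$ and the linearity of the jet operator, says precisely that $\phi_{x_0}$ is additive. Setting $\langle c_k(x_0),\cdot\rangle:=\phi_{x_0}(\cdot)$ once linearity is known yields the asserted formula, so the entire content reduces to: (a) additive $\Rightarrow$ $\R$-linear at each point; (b) continuity of $x_0\mapsto c_k(x_0)$; (c) local finite-dimensionality when $k=\infty$.

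For (a), additivity gives $\Q$-linearity of $\phi_{x_0}$ for free, and it remains to upgrade this to $\R$-linearity; this is where I expect the main difficulty. Fixing a positive germ $f$ with $v=J^k(\log f)(x_0)$, the powers $f^{t}=\exp(t\log f)$ are again positive germs with $J^k(\log f^{t})(x_0)=tv$, so $t\mapsto\log T_u(f^{t})(x_0)=\phi_{x_0}(tv)$ is an additive function of a single real variable. A measurable additive function of one variable is linear, so it suffices to exhibit some regularity of $t\mapsto T_u(f^{t})(x_0)$. The plan is to obtain measurability: the jet $J^kf^{t}(x)$ is jointly continuous in $(t,x)$, and $T_u(f^{t})(x)=F(x,J^kf^{t}(x))$ by part (2), so one reduces to a measurability (equivalently, local boundedness of the homomorphism $t\mapsto T_u(f^t)(x_0)$) property of $F$ on the positive part of the jet bundle. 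Reconciling the known continuity of $T_u(f^{t})$ in $x$ for each fixed $t$ with the needed regularity in $t$ — while the functional $\phi_x$ itself varies with the base point, so that one cannot simply isolate $\phi_{x_0}$ along a single image function — is the crux, and the step I expect to be the main obstacle.

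Granting $\R$-linearity, (b) follows from finite linear algebra in a chart. Identify $J^k_x\cong\R^N$ with the basis of monomial jets indexed by multi-indices $\beta$, $|\beta|\le k$, and write $c_\beta(x)=\langle c_k(x),e_\beta\rangle$. Testing $T_u$ on the fixed functions $f_\gamma=\exp(y^\gamma)$ (suitably cut off to lie in $\B$) produces, for each $\gamma$, the continuous map $x\mapsto\log T_u(f_\gamma)(x)=\sum_{\beta}c_\beta(x)\,\partial^\beta(y^\gamma)(x)/\beta!$, continuous because it is the logarithm of the positive image of a single function. The coefficient matrix $\big[\partial^\beta(y^\gamma)(x)/\beta!\big]_{\gamma,\beta}$ is triangular with unit diagonal, hence invertible with continuous (indeed polynomial) inverse, so solving this system expresses each $c_\beta$ as a continuous function of $x$; this is exactly continuity of the section $c_k$.

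Finally, for (c) with $k=\infty$, the covector $c_\infty(x_0)$ is a linear functional on the inverse limit $J^\infty_{x_0}=\varprojlim J^n_{x_0}$, and I would argue that, being continuous for the natural topology in which the $J^n$ are the continuous finite-dimensional quotients, it must factor through some finite $J^n$; a local boundedness argument (uniformity of this order on a neighborhood) then yields a finite $n$ and a continuous section $c_n$ of $(J^n)^*$ with $c_\infty=Q_n(c_n)$ there. This finite-order reduction in the $C^\infty$ case is the second place where I expect genuine work, the remainder being the functional-equation bookkeeping above.
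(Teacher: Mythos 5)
Your reduction to a pointwise additive functional on the jet fiber is exactly the paper's setup (the paper writes $A(f)=\log T_u(\exp f)$ and $A(f)(x)=B(x,J^kf(x))$ with $B(x,\cdot)$ additive), and your triangular-system argument for the continuity of the coefficients $c_\beta$ is essentially what the paper means by ``consider polynomial sections.'' But the two steps you flag as ``the crux'' and ``the second place where genuine work is needed'' --- (a) upgrading additivity of $\phi_{x_0}$ to $\R$-linearity, and (c) the finite-order reduction when $k=\infty$ --- are precisely the content of the lemma, and your proposal does not actually prove either. For (a), your plan to get measurability of $t\mapsto T_u(f^t)(x_0)$ cannot be carried out from the hypotheses: the only regularity available is continuity of $x\mapsto T_u(g)(x)$ for each \emph{fixed} $g$, and at a fixed $x_0$ the function $F(x_0,\cdot)$ on the jet fiber comes with no regularity whatsoever, so no pointwise argument at a single $x_0$ can succeed. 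For (c), your assertion that a linear functional on $J^\infty_{x_0}=\varprojlim J^n_{x_0}\cong\R^{\N}$ ``being continuous for the natural topology'' must factor through a finite quotient begs the question: no such continuity is given, and the algebraic dual of $\R^{\N}$ is strictly larger than $\bigoplus_n\R$ (e.g.\ functionals vanishing on the direct sum but not on the quotient), so finiteness of the order is a genuine statement to be proved, not a formality.

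The paper's resolution of both points (the unnumbered Lemma and Lemma \ref{lemma6.2} of Appendix A) uses an idea absent from your proposal: one trades the missing regularity in the jet variable for the available continuity in the base point, by \emph{varying $x$}. Concretely, one shows the set of bad points (where $B(x,\cdot)$ is non-linear, resp.\ does not factor through a finite jet) first has no accumulation point and then is empty. This is done by constructing, via a Borel-lemma-type construction (Section \ref{subsec:functions}), a single section $f=\sum t_kf_k$ whose jets at a convergent sequence $x_k\to x_\infty$ are prescribed values $t_k\epsilon_kv_k$ on which $|B(x_k,\cdot)|>1$ (using that a non-linear additive function is unbounded near $0$), while $J^sf(x_\infty)=0$; continuity of $B(J^sf)=\log T_u(\exp f)$ on $M$ then gives a contradiction. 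Once the bad set is shown discrete, every point is a limit of good points, $B(x_\infty,tv)$ becomes a pointwise limit of linear functions of $t$, hence measurable, and the Banach--Sierpi\'nski theorem finishes linearity; an analogous two-step argument handles the finite-order statement and the closedness of the sets $F_n$. Without some version of this multi-point construction, steps (a) and (c) of your proposal remain open, so the proof is incomplete.
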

\begin{proof}
 As in the proof above, $T_u$ clearly maps positive functions to
positive functions. Define $A:C^k_c(M)\to C(M)$ by $A(f)= \log
T_u(\exp(f))$. Then $A$ is an additive transformation, with the
additional property that $A(f)(x)=B(x, J^kf(x))$, where $B(x,\cdot):
J_x^k\to\R$ is an additive functional for every $x\in M$. Apply
Lemma \ref{lemma6.2} from Appendix A to conclude the stated result.
\end{proof}

We are ready to conclude the proof of Theorem \ref{thm-main:Cmfld}.
\begin{proof}[Proof of Theorem \ref{thm-main:Cmfld}]
 Recall that $T_u=T\circ u$, and observe that it is
surjective, as $T$ is. We already
know by Lemma \ref{lem:general_form} with $k=0$ that \[(Tf)(u(x))=\left\{\begin{array}{ll}0,& f(x)=0\\
|f(x)|^{c_0(x)}sgn(f(x)),& f(x)\neq 0\end{array}\right.\] with
$c_0(x)$ continuous. We are left to show that $c_0(x)>0$ everywhere.
Indeed, if $c_0(x)=0$ then $T_uf(x)$ is either $0$ or $\pm 1$ for every $f$, contradicting surjectivity of $T_u$; while if $c_0(x)<0$, we could
take a positive function $f$ with an isolated zero at $x_0$, and then $\lim_{x\to x_0} Tf(x)=\infty$, contradicting continuity of $Tf$.

Next we assume $k\geq 1$ and prove Theorem
\ref{thm-main:Ckmfld}, i.e. that $(Tf)(u(x))=f(x)$. We will denote $v=u^{-1}$.
\\Fix some $x_0\in M$, and choose a relatively compact neighborhood $U$ of $x_0$ as in Lemma \ref{lem:general_form}.
Thus $(T f)(u(x))=\exp( \langle c_k(x) , J^k(\log f)(x)\rangle)$ for
positive $f\in C^k(U)$, and $c_k=Q(n,k)(c_n)$ for some finite $n$,
$c_n$ a continuous section of $J^n$ over $U$ (i.e., $c_k$ only
depends on the $n$-jet of the function). We claim that one can take
$n=0$. We may assume that $U$ is a coordinate chart with $x_0$ at
the origin. Then, if $c_n$ at $x_0$ depends on terms of the jet
other than the constant term, one has
\[ \langle c_k(x) , J^k(\log
f)(x)\rangle=a_0(x)\log f(x)+\sum_{1\leq|\alpha|\leq n}a_\alpha(x)\frac{\partial ^{|\alpha|}\log f }{\partial x^{\alpha}}\] where $a_\alpha\in C(U)$, and $a_{\alpha_0}(0)\neq 0$ for some $\alpha_0\neq 0$. Fix such $\alpha$ of maximal modulus $m=|\alpha|$, and take $f(x)=\lambda_1 x_1+...+\lambda_dx_d$ where $d=\dim M$. Then \[\frac{\partial ^{|\alpha|}\log f }{\partial x^{\alpha}}=\frac{\pm \prod \lambda_j^{\alpha_j}(|\alpha|-1)!}{(\sum \lambda_j x_j)^{|\alpha|}}\] so an appropriate choice of $\lambda_j$ (not all zero) will guarantee that \[\sum_{|\alpha|=m} a_\alpha(x)\frac{\partial ^{|\alpha|}\log f }{\partial x^{\alpha}}=\frac{C(x)}{(\sum \lambda_j x_j)^{m}}\] with $C(x)$ continuous and non-vanishing near $0$. The same would hold, with a different $C(x)$, also if we sum up all the $\alpha$-derivatives for $1\le |\alpha|\le n$. It follows that
\[T_u(\sum \lambda_j x_j)=|\sum \lambda_j x_j|^{a_0(x)}e^{\sum_{1\le |\alpha|\le n} a_\alpha(x) (\log \sum\lambda_j x_j)^{(\alpha)}}\] cannot be continuous at $0$, a contradiction.

Thus $(Tf)(x)=f(v(x))^{a_0(v(x))}$ for positive $f$. Taking
$f_x\equiv 2_x$, we  conclude that $a_0(v(x))\in C^k(M)$. As in the
case $k=0$ we see that $a_0>0$, so $f(v(x))\in C^k(M)$ for all
positive $f$, which implies $v\in C^k(M, M)$. The same reasoning
applied to $T^{-1}$, we conclude that $u$ is a $C^k$-diffeomorphism.
Now it is obvious that $C_c^k(M)$ is invariant under $T$, so
$T:C_c^k(M)\to C_c^k(M)$ is a bijection. Since $k \ge 1$, we must
have $a_0\equiv 1$, so $(Tf)(u(x))=f(x)$, as claimed.
\end{proof}

\section{Complex valued functions}\label{sec:thecomplexcase}
In this section, we describe the general form of $T:\B\to \B$ for
the complex-valued function families $\B$ from Proposition
\ref{prop-compact}. We again treat the cases of $k=0$ and
$0<k\leq\infty$ separately. We write $\B^k$ instead of $\B$ for any
of the families $C^k(M)$, $C^k_c(M)$, and also $\S(n)$ if
$k=\infty$. One always has $C^k_c(M)\subset \B^k$.

\begin{prop}\label{propT_u123complex}
Given a multiplicative bijection $T:\B^k\to \B^k$ (with $0\leq k\leq
\infty$) there exists a homeomorphism $u$, given by Lemma
\ref{prop-compact}, such that letting $T_u$ be defined by
$T_u(f)=T(f)\circ u$, the new map $T_u:\B^k\to C(M, \C)$ has the
following properties:
\\(1) It is multiplicative: $T_u(fg)=T_u(f)T_u(g)$
\\(2) It is local, namely $(T_uf)_x=(T_ug)_x$ when $f_x=g_x$. Moreover, $(T_uf)(x)=F(x,
J^kf(x))$.
\\(3) $T_u(f)(x)=0$ if and only if $f(x)=0$.
\end{prop}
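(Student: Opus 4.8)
The plan is to run the proof of Proposition \ref{propT_u123} essentially unchanged: its first two parts never used the order or sign structure of $\R$, so they transfer directly to the complex-valued setting, while part (3) will be read off immediately from the zero-set correspondence of Proposition \ref{prop-compact}. Part (1) is formal: with $u$ the homeomorphism furnished by Proposition \ref{prop-compact}, one computes $T_u(fg)=T(fg)\circ u=(Tf\cdot Tg)\circ u=T_u(f)\,T_u(g)$.

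For part (2) I would transcribe Steps 1--3 of the proof of Proposition \ref{propT_u123}. First, $T(0)=0$: from $f\cdot 0=0$ one gets $T(f)\,T(0)=T(0)$ for every $f$, so if $T(0)$ were nonzero at some $x_0$ then every function in the image of $T$ would equal $1$ at $x_0$, contradicting surjectivity. Next comes the locality step: if $f=g$ on an open set $V$, then for each ball $B\subset V$ I take a non-negative bump function $h\in\B$ with $Z(h)=M\setminus B$; multiplicativity gives $Tf\cdot Th=Tg\cdot Th$, and since $Z(Th)=M\setminus u(B)$ by Proposition \ref{prop-compact}, we obtain $Tf=Tg$ on $u(B)$, hence on $cl(u(V))$ by continuity. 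This shows that the germ $(Tf)_{u(x)}$ depends only on $f_x$. Finally, for the jet statement I would fix $x_0$, take two sectors $V_1,V_2$ with $cl(V_1)\cap cl(V_2)=\{x_0\}$, and given $f_1,f_2$ with equal $k$-jets at $x_0$ use Whitney's extension theorem to build $f_3\in C^k_c(M,\C)$ agreeing with $f_j$ on $V_j$; locality then yields $Tf_1(u(x_0))=Tf_3(u(x_0))=Tf_2(u(x_0))$, so $(T_uf)(x)=F(x,J^kf(x))$.

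Part (3) I expect to follow instantly from Proposition \ref{prop-compact}: since $Z(Tf)=u(Z(f))$ and $u$ is bijective, we have $f(x)=0\iff u(x)\in Z(Tf)\iff T_u(f)(x)=0$. The only point needing care is that the real-case constructions survive verbatim over $\C$, which they do: the bump functions $h$ may be chosen real and non-negative, and Whitney's theorem is applied componentwise to the real and imaginary parts, so $f_3$ lies in $C^k_c(M,\C)$. I therefore anticipate no genuine obstacle here. The substantive difference from the previous section is that part (3) now records only the zero set rather than a sign normalization — making the complex statement strictly easier at this stage — with the finer analysis of the phase deferred to the results that follow.
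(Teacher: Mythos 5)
Your proposal is correct and follows exactly the route the paper intends: the paper itself states that the proof of Proposition \ref{propT_u123complex} ``is as in the real case, and is omitted,'' and your transcription of Steps 1--3 of Proposition \ref{propT_u123} (with part (3) read off from the zero-set correspondence of Proposition \ref{prop-compact}, and Whitney applied componentwise) is precisely that adaptation.
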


The proof of this statement is as in the real case, and is omitted.
We denote $v=u^{-1}$. It is then obvious that $C^k_c(M,\C)$ is an
invariant subspace of $T$, on which $T$ is bijective. Also by part
(2), $T_u$ extends naturally to a map $T_u:C^k(M,\C)\to C(M,\C)$ which
retains properties (1)-(3).

We next make some helpful decompositions, which enable us to treat
the various parts of the transform separately. A complex valued
function $f\in C^{k}(M, \C)$ can be written as $r(x)e^{i\theta(x)}$
where  $r \geq 0 $ continuous s.t. $r\in
C^k(\{x: r(x)>0\}$, and $\theta\in C^k(\{x:r(x)>0\}, S^1)$ and
\begin{prop}\label{propT_u123decomp} There exists a function $g_0\in C(M, \R_+)$, and global sections $d_k\in (J^k)^*(M\times S^1,\R)$,  $h_k\in (J^k)^*(M\times\R)$ and $e_k\in (J^k)^*(M\times S^1, S^1)$ such that
\[T_u(re^{i\theta})(x)=\left\{\begin{array}{ll}0,& r(x)=0\\
r(x)^{g_0(x)}e^{i\langle h_k(x), J^k\log r(x)\rangle} e^{\langle
d_k(x), J^k\theta(x)\rangle} e^{i\langle e_k(x),
J^k\theta(x)\rangle},& r(x)\neq 0\end{array}\right.\] The sections
$h_k, d_k, e_k$ are continuous when $k<\infty$, and locally finite
dimensional and continuous when $k=\infty$.
\end{prop}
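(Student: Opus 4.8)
The plan is to reduce the statement to two separate one-variable analyses -- one for the modulus and one for the phase -- by exploiting multiplicativity together with the locality established in Proposition \ref{propT_u123complex}. First I would dispose of the case $r(x)=0$: by part (3) of Proposition \ref{propT_u123complex}, $T_u(f)(x)=0$ precisely when $f(x)=0$, which is exactly the first branch of the asserted formula, so I may fix a point $x_0$ with $r(x_0)>0$. On a neighbourhood $U$ of $x_0$ I choose a continuous branch of the argument and write $f=r\,e^{i\theta}$ with $r>0$ and $r,\theta\in C^k(U)$. Completing the germs $r_{x_0}$ and $(e^{i\theta})_{x_0}$ to elements of $C^k_c(M)\subset\B^k$ (by a bump/Whitney extension) and invoking parts (1)--(2), I get $T_u(f)(x_0)=T_u(r)(x_0)\cdot T_u(e^{i\theta})(x_0)$, so it suffices to determine $T_u$ on positive functions and on unimodular functions separately.

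For the positive part I would set $s=\log r$ and study $P(s)=\log|T_u(e^s)|$ and $\Theta(s)=\arg T_u(e^s)\in\R/2\pi\Z$, both well defined because $T_u(e^s)$ is nowhere zero. Multiplicativity makes $P$ and $\Theta$ additive in $s$, and locality makes them depend only on $J^ks$, so -- exactly as in the derivation of Lemma \ref{lem:general_form} -- Lemma \ref{lemma6.2} represents them as $P(s)(x)=\langle c_k(x),J^ks(x)\rangle$ and $\Theta(s)(x)=\langle h_k(x),J^ks(x)\rangle \pmod{2\pi}$ for (continuous, locally finite-dimensional) jet sections $c_k,h_k$, the latter obtained after lifting the $S^1$-valued additive functional $\Theta$ on the real jet space to a real-valued one. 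I would then reduce $c_k$ to its value component by the continuity argument from the proof of Theorem \ref{thm-main:Ckmfld}: testing on real $f=\sum\lambda_jx_j$ near its zero, any dependence of $c_k$ on derivatives of $s$ forces $|T_u(f)|$ to blow up at that zero (the $\theta$-contribution stays bounded there), contradicting continuity of $T_u(f)$. This yields $|T_u(r)|=r^{g_0}$ with $g_0$ the value part of $c_k$, while $\Theta$ retains full-jet dependence and contributes $e^{i\langle h_k,J^k\log r\rangle}$. Positivity $g_0>0$ follows as in the real case from part (3) and surjectivity of $T_u$.

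For the unimodular part I would parametrize $e^{i\theta}$ by $\theta$; since multiplication corresponds to addition of phases, $Q(\theta)=\log|T_u(e^{i\theta})|$ and $\Psi(\theta)=\arg T_u(e^{i\theta})$ are again additive and local in $J^k\theta$, and Lemma \ref{lemma6.2} represents them as $\langle d_k,J^k\theta\rangle$ and $\langle e_k,J^k\theta\rangle$, producing the last two factors $e^{\langle d_k,J^k\theta\rangle}$ and $e^{i\langle e_k,J^k\theta\rangle}$. Combining the three reductions gives the stated formula on $U$; the locally defined sections patch to global ones by uniqueness on overlaps, and the continuity / local-finite-dimensionality claims are inherited verbatim from Lemma \ref{lemma6.2}.

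I expect the main obstacle to be the $S^1$-bookkeeping forced by the multivaluedness of $\theta$: one must check that each of the four factors is well defined on the germ of $f=re^{i\theta}$ rather than on $\theta$ itself, and in particular that Lemma \ref{lemma6.2} still applies when the target (for $\Theta,\Psi$) or part of the source (for $Q,\Psi$) is $\R/2\pi\Z$. The sharpest point is that the modulus $e^{\langle d_k,J^k\theta\rangle}$ is a genuine single-valued positive number whereas the $0$-jet of $\theta$ lives in $S^1$; invariance under $\theta\mapsto\theta+2\pi$ therefore forces $d_k$ to annihilate the value component, so the modulus-from-phase depends only on the derivatives of $\theta$, while the $S^1$-valued functional $e_k$ is under no such constraint (explaining the bundles $(J^k)^*(M\times S^1,\R)$ versus $(J^k)^*(M\times S^1,S^1)$). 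This same asymmetry is what makes the continuity reduction kill only the modulus-from-modulus term: the phase-from-modulus factor $e^{i\langle h_k,J^k\log r\rangle}$ may oscillate wildly as $r\to0$, but since $r^{g_0}\to0$ the product $T_u(f)\to0$ regardless, so no contradiction arises and $h_k$ may keep its full jet.
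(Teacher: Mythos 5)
Your proposal is correct and follows essentially the same route as the paper: decompose $T_u(re^{i\theta})$ via multiplicativity into the four group homomorphisms (modulus and phase of the image, applied separately to the modulus and phase of the input), use the locality from Proposition \ref{propT_u123complex}, and invoke the additive local operator lemmas of Appendix A (the paper cites Lemmas \ref{lemSR}, \ref{lemRS}, \ref{lemSS} for the $S^1$ variants you flag as needing a check), with the same reduction of the modulus-from-modulus term to $r^{g_0}$ via the blow-up argument from Theorem \ref{thm-main:Ckmfld}. Your closing observations on $d_k$ annihilating constant jets and on why $h_k$ survives with its full jet match the paper's conventions and its later Step 4 exactly.
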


\begin{proof} We may, using multiplicativity of $T_u$,
write \[T_u(r(x)\exp(i\theta(x)))=T_u(r(x))S(\theta(x))\] where
$S(\theta)=T_u(\exp(i\theta))$. Since we already know that zeros are
mapped to zeros, $T_u:C^k(M,\R_+)\to C(M,\C^*)$ and $S:C^k(M,S^1)\to
C(M,\C^*)$  are group homomorphisms. Denote further $T_u(r)=G(r)H(r)$
and $S(\theta)=D(\theta)E(\theta)$, where $D:C^k(M,S^1)\to
C(M,\R_+)$, $E:C^k(M,S^1)\to C(M, S^1)$, $G:C^k(M,\R_+)\to
C(M,\R_+)$, $H:C^k(M,\R_+)\to C(M, S^1)$ are homomorphisms of
groups. Furthermore, property (2) immediately implies that $D, E,
G,H$ are all local, namely depend only on the jets of the functions.

As in the real case, we apply lemma \ref{lemma6.2} of Appendix A to
conclude that $G(r)(x)=r(x)^{g_0(x)}$ with $g_0\in C(M, \R_+)$ (if
$k\geq 1$, $G$ cannot depend on higher derivatives of $r$ as in the
proof of Theorem \ref{thm-main:Ckmfld}, while $g_0>0$ as in the
proof of Theorem \ref{thm-main:Cmfld}: $g_0\geq 0$ to guarantee
continuity of $T_u(r(x))$ at a zero point of $r$, and $g_0(x)=0$
would immediately contradict surjectivity of $T$). Then by Lemmas
\ref{lemSR}, \ref{lemRS}, \ref{lemSS} of Appendix A,
$D(\theta)(x)=\exp(\langle d_k(x), J^k\theta(x)\rangle)$;
$H(r)(x)=\exp(i\langle h_k(x), J^k\log r(x)\rangle)$; and
$E(\theta)=\exp(i\langle e_k(x), J^k\theta(x)\rangle)$ where $d_k,
h_k, e_k$ are as stated.
\end{proof}

We now can complete the proof of Theorem \ref{thm-main:Complex}.
\begin{proof}[Proof of Theorem  \ref{thm-main:Complex}]
Recall that $k=0$. Then $d_k(x) = 0$, $e_k(x) = m$ for some fixed
$m\in\mathbb Z$ and $h_k(x) = h_0$ for some $h_0 \in C(M)$. Thus $T_u(re^{i\theta})=r^{g_0}e^{i(m\theta+h_0\log
r)}$, $g_0, h_0\in C(M)$. From injectivity of $T_u$ on $\B$, $m=\pm
1$: otherwise, either $m=0$ and $T_u$ does not depend on $\theta$; or
$|m|\geq 2$, so $\theta$ can be replaced with $\theta+2\pi/m$
without affecting $T_u(re^{i\theta})$ (for any compactly supported
$r(x)$). Thus
\[T(re^{i\theta})(u(x))=r(x)^{p(x)}e^{\pm i\theta}\] where
$p(x)=g_0(x)+ih_0(x)$, as claimed.
\end{proof}

Next we proceed to prove Theorem \ref{thm-main:ComplexC^k}.
\begin{proof}[Proof of Theorem  \ref{thm-main:ComplexC^k}]
\noindent {\em Step 1}. We prove that $u$ is a $C^k$-diffeomorphism
of $M$. Taking $r\equiv 2$, $\theta\equiv 0$ we see that
$|T(2)(x)|=2^{g_0(v(x))}\in C^k(M)$, in particular $g_0(v(x))\in
C^k(M)$. Thus for any $r(x)\in C^k(M, \R_+)$ (again $\theta(x)\equiv
0)$)
\[\log |T(r)(x)|=g_0(v(x))\log r(v(x))\in C^k(M)\] so also $r(v(x))\in C^k(M,\R_+)$. Thus $v\in C^k(M,M)$.
By considering $T^{-1}$, $u$ is also $C^k$, implying $u$ is a
$C^k$-diffeomorphism of $M$. From now on we only consider
$T_uf=Tf\circ u$, and prove that $T_uf=f$ or $T_uf=\overline f$. Note
that $T_u:C^k(M)\to C^k(M)$, and its restriction $T_u:C_c^k(M)\to
C_c^k(M)$ is a bijection.

\noindent {\em Step 2}. We show here that $d_k=0$.  If $d_k\neq 0$,
one could choose for any $x_0\in M$ a function $\theta\in
C^k(U\setminus \{x_0\})$ where $U$ is a small neighborhood of $x_0$
contained in a coordinate chart with its origin at $x_0$, for which
$d_k$ only depends on the $m$-jet, $m<\infty$, s.t. $\langle
d_k(x_n), J^k\theta(x_n)\rangle\geq \frac{g_0(x_n)}{|x_n|^2}$ for
some $U\setminus \{x_0\} \ni x_n\to x_0$, while $|J^j\theta(x)|\leq
C_j|x|^{-N_j}$ for all $j\leq k$ and $x\in U\setminus \{x_0\}$ (simply
take $\theta=C|x|^{-2}$ with appropriate $C$ in a contractible
neighborhood of the sequence $(x_n)$, and extend it to $U\setminus
x_0$ arbitrarily). Then, taking $r(x)=\exp(-1/|x|^2)$, one has
$f(x)=r(x)e^{i\theta(x)}\in C^k(U)$, $f(x_0)=0$ while
\[|T_uf(x_0)|=\lim_{n\to\infty} r(x_n)^{g_0(x_n)}\exp(\langle
d_k(x_n), J^k\theta(x_n)\rangle)\geq\]
\[\geq\lim_{n\to\infty}\exp(-g_0(x_n)/|x_n|^2)\exp(g_0(x_n)/|x_n|^2)=1\] a contradiction. Thus, $d_k\equiv 0$.

\noindent {\em Step 3}. We show that $g_0\equiv1$. Indeed, since
$|T_u(r(x)e^{i\theta(x)})|=r(x)^{g_0(x)}\in C^k(M,\R_+)$ for all
$r\in C^k(M,\R_+)$, and since $T_u$ is surjective on $C_c^k(M,\C)$,
we must have $g_0\equiv1$

\noindent {\em Step 4}. We next claim that $h_k\equiv 0$. Note that
for all $r(x)\in C^k(M,\R_+)$, \[T_u(r(x))=r(x)e^{i\langle h_k(x),
J^k\log r(x)\rangle}\in C^k(M)\] so $h_k$ is a $C^k$ section of
$(J^k)^*(M\times\R)$. First, $h_k$ only depends on the constant term
of the jet, or else $T_u(r(x))$ would not be in $C^k$ for all $r\in
C^k(M,\R_+)$: this is obvious when $k<\infty$; and if $k=\infty$, we
proceed as was done in the real case. Fix a coordinate chart $U$
s.t. $h_k(x)=(a_\alpha(x))_{|\alpha|\leq m}$ where $a_\alpha\in
C^k(U)$, $m\geq 1$ and $a_\alpha(0)\neq 0$ for some $\alpha$ with
$|\alpha|=m$. Take  $f(x)=\lambda_1 x_1+...+\lambda_dx_d$ with
coefficients $\lambda_j$ s.t.  \[\sum_{|\alpha|=m}
a_\alpha(x)\frac{\partial ^{|\alpha|}\log f }{\partial
x^{\alpha}}=\frac{C(x)}{(\sum \lambda_j x_j)^{m}}\] with $C(x)\in
C^k(U)$ and non-vanishing near $0$. Then, assuming $\lambda_1\neq 0$
and considering only points $x$ where $f(x)>0$,
\[\begin{array}{ll}
\frac{\partial }{\partial x_1}(T_uf)=\lambda_1e^{i\langle h_k(x),
J^k\log f(x)\rangle}+ \\+ (\sum\lambda_j
x_j)\left(\left(\frac{\partial C(x) }{\partial x_1}\frac{1}{(\sum
\lambda_j x_j)^{m}}-\frac{m\lambda_1 C(x)}{(\sum \lambda_j
x_j)^{m+1}}\right)+\sum_{|\alpha|\leq m} \frac{C_\alpha(x)}{(\sum
\lambda_j x_j)^{|\alpha|}}\right)e^{i\langle h_k(x), J^k\log
f(x)\rangle}\end{array}\] where all $C_\alpha$ are continuous. Thus
there is no limit to $\frac{\partial }{\partial x_1}(T_uf)$ as $x\to
0$ (along points of positivity for $f$), a contradiction. So
$h_k=h_0\in C(M,\R)$.
\\\\Then, if $h_0(x_0)\neq 0$, take a chart with $x_0$ at the origin,
and consider $f(x)=x_1\in C^k(M)$ - the first coordinate function.
Then \[\frac{\partial}{\partial x_1}(T_uf)(0)=
\lim_{x_1\to0^+}\frac{x_1e^{ih_0(x)\log
x_1}}{x_1}=\lim_{x_1\to0^+}e^{ih_0(x)\log x_1}\] which diverges
since $h_0(x)\log x_1$ is continuous when $x_1\in(0,\infty)$, and
\\$\lim_{x_1\to0^+} |h_0(x)\log x_1|=\infty$. This is a
contradiction.

\noindent {\em Step 5}. Finally, we want to show that $\langle
e_k(x), J^k\theta(x)\rangle=\pm \theta(x)$. First, by considering a
coordinate chart and polynomial functions $\theta$, we see that the
components of $e_k(x)$ are in fact $C^k$. We now treat separately
the cases $k<\infty$ and $k=\infty$.

\noindent {\em Case 1}: $k<\infty$. Then
$T_u(e^{i\theta(x)})=\exp(i\langle e_k(x), J^k\theta(x)\rangle)\in
C^k (M, S^1)$ for all $e^{i\theta}\in C^k(M, S^1)$, which is
impossible unless $e_k$ only depends on $J^0\theta$, i.e. $\langle
e_k(x), J^k\theta(x)\rangle=m\theta(x)$. From injectivity of $T_u$ on
$C_c^k(M,\C)$, $m=\pm1$.

\noindent {\em Case 2}: $k=\infty$. Let us prove the following
\begin{prop*}  $P:C^\infty(M,\R)\to
C^\infty(M,\R)$, $\theta\mapsto \langle e_k(x),
J^\infty\theta(x)\rangle$ induces an isomorphism of stalks of smooth
functions at every $p\in M$. \end{prop*}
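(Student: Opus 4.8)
The plan is to transport the bijectivity of $T_u$ on germs through the polar decomposition and read off that $P$ is simultaneously injective and surjective on the stalk $\mathcal O_p:=C^\infty_p$ of real germs at $p$. First I record the two inputs I will use. Since $T_u^{-1}(h)=T^{-1}(h\circ v)$ with $v=u^{-1}$, and $T^{-1}$ is again a multiplicative bijection, both $T_u$ and $T_u^{-1}$ are local by Proposition \ref{propT_u123complex}(2), so $T_u$ descends to a bijection of the stalk of complex germs at $p$ onto itself. Moreover, with $g_0\equiv 1$ and $d_k\equiv 0$ (Steps 2 and 3), $T_u$ is \emph{modulus preserving}, $|T_u(re^{i\theta})|=r$, and on unimodular germs it acts by $e^{i\theta}\mapsto e^{iP\theta}$, where $P$ is a well-defined $\R$-linear map $C^\infty(M,\R)\to C^\infty(M,\R)$ because the components of $e_k$ are $C^\infty$ (first line of Step 5) and locally of finite order. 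Being local, $P$ induces an endomorphism $P_p$ of $\mathcal O_p$.

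For injectivity of $P_p$, I would argue by scaling. Suppose a real germ $\theta$ satisfies $P\theta=0$ at $p$. Then for every $s\in\R$ one has $T_u(e^{is\theta})=e^{isP\theta}=1=T_u(1)$ as germs, so injectivity of $T_u$ on germs forces $e^{is\theta}\equiv 1$ near $p$ for every $s$. Evaluating at a fixed $x$ near $p$ gives $e^{is\theta(x)}=1$ for all $s\in\R$, hence $\theta(x)=0$; thus $\theta=0$ as a germ.

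For surjectivity, fix a real germ $\psi$ and apply surjectivity of $T_u$ on germs to the unimodular germ $e^{i\psi}$: there is a germ $f$ with $T_u f=e^{i\psi}$. Modulus preservation gives $|f|=1$, so $f=e^{i\theta}$ and $e^{iP\theta}=e^{i\psi}$. Since $P\theta-\psi$ is continuous and valued in the discrete set $2\pi\Z$, it is a constant $2\pi n$, i.e. $P\theta=\psi+2\pi n$. Hence the image $V:=\mathrm{Im}(P_p)$, an $\R$-linear subspace of $\mathcal O_p$, meets every coset of $2\pi\Z$. Taking $\psi$ to be the constant $\pi$ produces an element $\pi+2\pi n\in V$, a nonzero constant; as $V$ is a real subspace it then contains all constant germs, in particular $2\pi\Z$, whence $V=V+2\pi\Z=\mathcal O_p$. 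Together with injectivity this shows $P_p$ is an isomorphism.

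The main obstacle is precisely the surjectivity step: the exponential $e^{i\cdot}$ only determines $\theta$ modulo $2\pi$, so surjectivity of $T_u$ yields solvability of $P\theta=\psi$ only up to a locally constant integer multiple of $2\pi$, and the genuine linear surjectivity must still be recovered. The coset argument above — a scaling-invariant subspace that meets every $2\pi\Z$-coset must contain a nonzero constant, hence all constants, hence everything — is what removes this ambiguity. The remaining points (that $P$ maps $C^\infty$ into $C^\infty$ and descends to the stalks, and that $T_u$ is bijective on germs) are routine given the properties of $e_k$ and the locality of $T_u$ and $T_u^{-1}$.
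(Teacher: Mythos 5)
Your argument is correct, and its skeleton is the same as the paper's: both proofs transport the bijectivity of $T_u$ on stalks (available because $T_u$ and $T_u^{-1}$ are local and $T_u$ is bijective on $C^\infty_c$) through the identity $T_u(re^{i\theta})=r\,e^{iP\theta}$, and the only real work is removing the resulting mod-$2\pi\Z$ ambiguity. Where you diverge is precisely there. The paper invokes the fact that $P$ acts on constant germs by $c\mapsto mc$ with $m\neq 0$ (itself deduced from injectivity of $T_u$ on $C^k_c(M,\C)$): for injectivity it reduces to $\theta\equiv 2\pi l$ and concludes $l=0$, and for surjectivity it corrects the preimage by a constant. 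You avoid this input altogether: for injectivity you scale $\theta$ by every $s\in\R$ before exponentiating, and for surjectivity you use that $\mathrm{Im}(P_p)$ is a linear subspace meeting every coset of $2\pi\Z$, hence contains a nonzero constant, hence all constants, hence everything. Both resolutions are sound; yours is slightly more self-contained at this stage (no appeal to $m\neq 0$), while the paper's is more direct once $P(c)=mc$ is on the table. Your preliminary reductions (modulus preservation from $g_0\equiv 1$, $d_k\equiv 0$; smoothness and locality of $P$ from the regularity of $e_k$) match the paper's setup and are used correctly.
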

\begin{proof} Indeed, fix $p\in M$, and a germ $u_p$ represented by $u\in
C^\infty(U,\R)$ with some small neighborhood $U\ni p$.
\\To see that $Ker(P_p)=0$, assume
$(Pu)_p=0$, so $Pu$ vanishes identically in some neighborhood $V$ of
$p$. Take any $r\in C_c^\infty(M,\R_+)$ with $\supp(r)\subset V$ and
$r(p)>0$. Then $T_u(r\exp(iu))=r\exp(iPu)\equiv r$ and also
$T_u(r)=r$, so by injectivity of $T_u$, $u$ must be a multiple of
$2\pi$ whenever $r\neq 0$, in particular $u_p\equiv (2\pi l)_p$ - a
constant germ with $l\in\mathbb Z$. Since $P(c)=m c$ for constant
functions $c$, and $m\neq 0$ from injectivity of $T_u$ on
$C_c^k(M,\C)$, we may conclude that $c=0$ and so $u_p=0$.
\\For surjectivity of $P_p$, let us find $v_p$ s.t. $(Pv)_p=u_p$. Choose any smooth continuation of $u$ to
$M$, and some $r(x)\in C_c^\infty(M,\R_+)$ with $r(p)=1$. By
surjectivity of $T_u$, one can find $v\in C^\infty(M,\R)$ s.t.
$r\exp(iu)=T_u(r\exp(iv))=r\exp(iPv)$. Thus $Pv\equiv u$ modulo
$2\pi$ in some neighborhood $V=\{r(x)\neq 0\}$ of $p$. We then may
replace $v$ by $v+2\pi l$ if necessary, and replace $V$ by a
connected neighborhood of $p$ so that $(Pv)_p\equiv u_p$, as
required.\end{proof}

\noindent {\em Step 6}. Now fix a small neighborhood $W$ in $M$,
s.t. $\langle e_k(x), J^k\theta(x)\rangle$ only depends on a finite
jet, i.e. $P$ is a differential operator in $W$. We apply a
consequence of Peetre's theorem (see Lemma \ref{peetre} below) to
conclude that $P$ is of order $0$, implying $J^k\theta=m\theta$.
From injectivity of $T_u$ on $C_c^k(M,\C)$, $m=\pm 1$. This concludes
the proof of Theorem \ref{thm-main:ComplexC^k}.\end{proof}

\section{Various generalizations and applications to Fourier transform}\label{sec:applicationtofourier}

One of our main motivations for studying multiplicative transforms
is that Fourier transform can be, in certain settings, characterized
by the property that it carries product to convolution, as explained
in the introduction. We already stated two corollaries of Theorem
\ref{thm-main:ComplexC^k} following from this point of view, namely
Theorems \ref{thm-main:compactfourier} and
\ref{thm-main:schwartzfourier}. Of course, in general the Fourier
transform is not defined on all continuous functions, and even when
it is, its image is usually not as well understood as in the case of
Schwartz and compactly supported function. Let us first state a
formal corollary of Theorem \ref{thm-main:Cmfld} and of Theorem
\ref{thm-main:Ckmfld}, in the case where $M = S^1$, that is, of
$2\pi$-periodic real-valued functions.

Denote the subclass of $\ell_2(\ZZ)$ consisting of those sequences
which are the coefficients of the fourier series of $2\pi$-periodic
real valued  $C^{k}$ functions by $E_k$. Fourier series is a
bijection $\hat{\cdot}: C^{k}(S^1, \R) \to E_k$, given by
\[ \hat {f}(n) = \int_0^{2\pi} f(x) e^{-2\pi i n x }d x.\]
It satisfies that $\widehat{f\cdot g} = \hat{f} * \hat{g}$ where here we
use $*$ for two series to mean their convolution (or Cauchy
product), that is,
\[ \{ a_n\} * \{ b_n\} = \{c_n\}\qquad {\rm where} \,\,\, c_k =
\sum_{j\in \ZZ} a_j b_{k-j}.\]

\begin{cor}\label{cor:Ccaseseries}
Let $F: C^{k}(S^1, \R) \to  E_k$ be a bijection which satisfies
\begin{eqnarray}
F(f \cdot g)=(Ff)* (Fg).
\end{eqnarray}
Then there exists some continuous $p:\RR_+\to \RR_+$ and a
$C^k$-diffeomorphism $u: S^1 \to S^1$ such that
\begin{equation}
Ff = \hat{g} \,\, {\rm with~}\, g(x) = |f(u(x))|^{p(x)}
sgn(f(u(x))).
\end{equation}
and if $k\geq 1$ then $p\equiv1$ , i.e. $g=f\circ u$.
\end{cor}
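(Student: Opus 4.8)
The plan is to reduce the statement to the purely multiplicative Theorems \ref{thm-main:Cmfld} and \ref{thm-main:Ckmfld} by conjugating $F$ with the Fourier-series isomorphism. Since $\widehat{f\cdot g}=\hat f * \hat g$, the bijection $\hat{\cdot}\colon C^{k}(S^1,\R)\to E_k$ intertwines the pointwise product with the Cauchy product $*$; hence its inverse carries $*$ back to the pointwise product, because $a*b=\widehat{\phi\psi}$ whenever $\hat\phi=a$ and $\hat\psi=b$. I would therefore set $T:=(\hat{\cdot})^{-1}\circ F\colon C^{k}(S^1,\R)\to C^{k}(S^1,\R)$ and verify that it is a multiplicative bijection:
\[
T(f\cdot g)=(\hat{\cdot})^{-1}\big(Ff * Fg\big)=(\hat{\cdot})^{-1}(Ff)\cdot(\hat{\cdot})^{-1}(Fg)=Tf\cdot Tg .
\]
Bijectivity of $T$ is immediate, being a composition of bijections.

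Next I would invoke the multiplicative theorems on $M=S^1$. For $k=0$, Theorem \ref{thm-main:Cmfld} produces a homeomorphism $w\colon S^1\to S^1$ and a continuous $p_0\colon S^1\to\R_+$ with $(Tf)(w(x))=|f(x)|^{p_0(x)}\,sgn(f(x))$. Setting $u:=w^{-1}$ and $p(x):=p_0(u(x))$, the substitution $y=w(x)$, $x=u(y)$, rewrites this as $(Tf)(x)=|f(u(x))|^{p(x)}\,sgn(f(u(x)))$, which is exactly the function $g$ in the statement. Since $F=\hat{\cdot}\circ T$, we then get $Ff=\widehat{Tf}=\hat g$, as claimed. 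For $k\geq 1$, Theorem \ref{thm-main:Ckmfld} gives the stronger conclusion that $w$ is a $C^k$-diffeomorphism and $(Tf)(w(x))=f(x)$, i.e. $p_0\equiv 1$; after the same change of variables this yields $p\equiv 1$ and $g=f\circ u$.

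There is no genuine obstacle here: the entire content lives in the previously proven multiplicative theorems, and the Corollary is essentially a transport of their conclusions across the Fourier-series isomorphism. The only points requiring care are bookkeeping matters, namely relating the diffeomorphism $w$ delivered by those theorems (acting on the source variable) to the diffeomorphism $u=w^{-1}$ appearing in the statement, together with the corresponding reparametrization $p(x)=p_0(u(x))$ of the exponent. One should also confirm that $E_k$ is precisely the image of $C^{k}(S^1,\R)$ under Fourier series and is closed under the Cauchy product $*$, so that $\hat{\cdot}$ is indeed an isomorphism of the relevant multiplicative structures; this is the standard Fourier-series fact recalled just before the Corollary.
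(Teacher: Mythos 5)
Your proposal is correct and is exactly the intended argument: the paper presents this as a ``formal corollary'' of Theorems \ref{thm-main:Cmfld} and \ref{thm-main:Ckmfld} obtained by conjugating $F$ with the Fourier-series isomorphism, which is precisely your reduction $T=(\hat{\cdot})^{-1}\circ F$. Your bookkeeping with $u=w^{-1}$ and $p(x)=p_0(u(x))$, and the observation that $E_k$ is by definition the image of $C^k(S^1,\R)$ and hence closed under the Cauchy product, are the only points needing care, and you handle both.
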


Another interesting class  to work with is
$L_2(\R) \cap C^{k}(\R)\cap L_{\infty}(\R)$, and
although our theorems do not formally apply to this class, it is not hard to check that
their corresponding variants are valid as well, as
the interested reader may care to verify.

One may thus apply the Fourier transform $\FF$ in this case, and
conclude that the only bijections from this class  to its images under
$\FF$ which map product to convolution are the standard Fourier transform
composed with the additional terms coming from out main theorems
(a diffeomorphism $u$ only, if $k\ge 1$, and some power $p(x)$ and sign if $k = 0$).
%
%
%
Clearly  not every choice of $u$ and $p$
will give a bijection, but the statement is only on the existence of
such functions. The ``permissible'' $u$ and $p$ are to be determined by
the class in question.

We next briefly present an observation regarding possible generalizations of the main theorems. We state them in the simplest case of continuous functions on $M$.
\begin{thm}\label{thm-gen}
Let
$M$ be a real topological manifold,
let $V, W, U:C(M) \to  C(M)$ satisfy that $V$ is a bijection and
 that for all $f,g \in C(M)$
\begin{eqnarray}\label{eq_multthreediff}
V(f \cdot g)=(Wf)\cdot (Ug).
\end{eqnarray}
Then there exist continuous
$a,b:M \to \R^+$,
$p:M\to \RR_+$, and a
homeomorphism $u: M \to M$ such that
\begin{eqnarray*}
Vf(u(x)) = a(x)|f(x)|^{p(x)} sgn(f(x)),\\
Wf(u(x)) = b(x)|f(x)|^{p(x)} sgn(f(x)), \\
Uf(u(x)) = c(x)|f(x)|^{p(x)} sgn(f(x)),\\
\end{eqnarray*}
with $c(x) = \frac{a(x)}{b(x)}$.
\end{thm}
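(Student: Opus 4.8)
**The plan is to reduce the three-map functional equation \eqref{eq_multthreediff} to the already-solved single-map (multiplicative bijection) case of Theorem \ref{thm-main:Cmfld}.** The key observation is that although $V$, $W$, $U$ are a priori three unrelated maps, the equation $V(fg)=(Wf)(Ug)$ rigidly couples them. First I would extract a genuinely multiplicative map from this data. Evaluating at special functions is the natural starting point: setting $g\equiv 1$ gives $Vf = (Wf)\cdot U(1)$, and setting $f\equiv 1$ gives $Vg = W(1)\cdot(Ug)$. Writing $b_0 := U(1)$ and $c_0 := W(1)$ (continuous functions, obtained from $V$ being a bijection so that these are honest elements of $C(M)$), we get $Wf = Vf/b_0$ and $Uf = Vf/c_0$ wherever the denominators are nonzero.

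\textbf{Second, I would substitute these expressions back into \eqref{eq_multthreediff} to force $V$ itself to be essentially multiplicative.} Plugging $Wf = Vf/b_0$ and $Ug = Vg/c_0$ into $V(fg)=(Wf)(Ug)$ yields
\begin{equation}
V(fg) = \frac{(Vf)(Vg)}{b_0\, c_0}.
\end{equation}
Setting $f=g\equiv 1$ in this relation gives $V(1) = V(1)^2/(b_0 c_0)$, hence $b_0 c_0 = V(1)$ on the support of $V(1)$; and more usefully, the map $\widetilde V := V/V(1)$ (again using bijectivity of $V$ to guarantee $V(1)$ is nonvanishing, which should follow exactly as in Step~1 and the final paragraph of the proof of Theorem \ref{thm-main:Cmfld}, since a zero of $V(1)=V(1\cdot 1)$ would propagate) satisfies $\widetilde V(fg) = \widetilde V(f)\,\widetilde V(g)$. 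Thus $\widetilde V$ is a multiplicative bijection of $C(M)$, and Theorem \ref{thm-main:Cmfld} applies directly: there is a homeomorphism $u:M\to M$ and continuous $p:M\to\R_+$ with $\widetilde V(f)(u(x)) = |f(x)|^{p(x)}\,sgn(f(x))$.

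\textbf{Third, I would unwind the normalizations to recover $V$, $W$, $U$ in the stated form.} Setting $a(x) := V(1)(u(x))$ (pulled back appropriately so that $Vf(u(x)) = a(x)|f(x)|^{p(x)}sgn(f(x))$), and recalling $Wf = Vf/b_0$, $Uf = Vf/c_0$, the formulas for $W$ and $U$ follow with $b(x) = a(x)/b_0(u(x))$ and $c(x) = a(x)/c_0(u(x))$; the relation $c = a/b$ is then immediate from $b_0 c_0 = V(1)$, i.e. from $b(x)c(x) = a(x)^2/V(1)(u(x)) = a(x)$, giving $c = a/b$ as claimed. One should check the signs: since $V(1)=V(1\cdot 1)=(W1)(U1)=c_0 b_0$ is a product and $V$ maps positive functions to positive functions (as in Step~6 of Proposition \ref{propT_u123}), $a,b,c$ are genuinely positive-valued, matching the assertion $a,b:M\to\R^+$.

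\textbf{The main obstacle I anticipate is the nonvanishing of $V(1)$, $b_0$, $c_0$, and the careful bookkeeping of where divisions are legitimate.} A priori $U(1)$ or $W(1)$ could vanish somewhere, which would break the division step. This is precisely where bijectivity of $V$ must be leveraged: one argues, as in Step~1 of Proposition \ref{prop-dima}, that $Z(V(1))=\emptyset$, and then that $Z(b_0)$ and $Z(c_0)$ are likewise empty since $V(1)=b_0 c_0$ and a zero of either factor would force a zero of $V(1)$. Once the nonvanishing is secured, the reduction to Theorem \ref{thm-main:Cmfld} is clean and the remaining identifications are routine algebra; the genuinely new content is entirely in the algebraic decoupling of the three maps, which the substitution above accomplishes.
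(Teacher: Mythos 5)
Your proposal is correct and follows essentially the same route as the paper: evaluate at $g\equiv 1$ and $f\equiv 1$ to get $Wf=Vf/U(1)$ and $Ug=Vg/W(1)$, use surjectivity of $V$ to see these denominators never vanish, normalize $V$ by $V(1)=W(1)U(1)$ to obtain a multiplicative bijection, apply Theorem \ref{thm-main:Cmfld}, and unwind. The only caveat (shared with the paper's own write-up) is the positivity of $a,b,c$: since $V$ itself need not send positive functions to positive functions (e.g.\ $V=-\mathrm{id}$, $W=-\mathrm{id}$, $U=\mathrm{id}$), the factors $U(1)$, $W(1)$ are merely nonvanishing, so the sign bookkeeping in your last paragraph deserves a more careful statement.
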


\begin{proof}
Take $g \equiv 1$, and denote $Wg(x) = c(x)\in C(\R)$ and $Ug(x) = d(x)\in C(\R)$.
Then $V(f) = U(f) \cdot c(x) = d(x)\cdot W (f)$. From bijectivity of $V$ we see that $c$ and $d$ can never vanish, and that
\[ V(fg) = \frac{1}{c(x)d(x)}V(f)V(g). \]
Define $Tf = Vf/(cd)$ we have
\[ T(fg)= \frac{1}{cd}V(fg) = \frac{1}{cd}Vf\cdot \frac{1}{cd}Vg = Tf \cdot Tg. \]
Since $V$ is a bijection, so is $T$, and we may apply Theorem \ref{thm-main:Cmfld} to conclude that  there exists some continuous $p:M\to \RR_+$, and a
homeomorphism $u: M \to M$ such that
\begin{equation}
(Tf)(u(x)) = |f(x)|^{p(x)} sgn(f(x)).
\end{equation}
Therefore, letting $a(x) = c(u(x))d(u(x))$ and $b(x) = d(u(x))$, the proof is complete.
\end{proof}

\begin{rem}
The version of the above theorem in which Fourier transform can be
applied (say, the $L_2(\R) \cap C(\R)\cap L_\infty(\R)$ case) has,
as usual, a direct consequence regarding the
 exchange of product and convolution. Assume $W,U,V$ satisfy
\[  V(f\cdot g) = W(f)*U(g).\]
Apply $\FF$ and get that
$V' = \FF V$, $W' = \FF W$ and $U' = \FF U$ satisfy
\[ V'(f \cdot g) = W' (f) \cdot U'(g), \]
which is equation \eqref{eq_multthreediff}. Of course, one needs some assumption on the range of $V$ to get that $V'$ is a bijetcion, and apply a modification of Theorem \ref{thm-gen}. The conclusion would be of the form
\[ (V f)(u(x)) = \hat{a}(x)* \FF\left(|f(x)|^{p(x)} sgn(f(x))\right).\]
\end{rem}

\appendix
\section{Additive local operators on jet bundles}
\subsection{A review of jet bundles}

We briefly outline the basic definitions concerning jet bundles. For
more details, see \cite{Saunders}.

Let $M$ be a $C^n$ manifold ($0\leq n\leq \infty$), and fix a $C^n$
smooth real vector bundle $E$ over $M$. We denote by $\mathcal
O_k(E, U)$ the $C^k$ sections on $U\subset M$, and $\Gamma({\mathcal
O}^k(E))=\mathcal O_k(E, M)$ the global $C^k$ section of $E$. Also,
$\Gamma^k_c(E)$ will denote the compactly supported global $C^k$
sections of $E$. For our purposes, we really only need two cases:
$E=M\times\R$ and $E=M\times \C$. The $C^k$ sections are then simply
$C^k$ functions on $M$ with values in $\R$ or $\C$. Let $J^k=J^k(E)$
(with $k\leq n$) denote the associated $k$-jet bundle for which the
fiber over $x\in M$ is denoted $J^k_x$. We give two equivalent
definitions of jet bundles, and consider first the case $k<\infty$.
\\\\1. Consider the sheaf of modules $\mathcal O_k(E)$ of $C^k$
sections of $E$ over the sheaf of rings $\mathcal O_k(M):=\mathcal O_k(M\times \R)$. The stalk $\mathcal O_{k,x}$
at $x$ of $\mathcal O_k(M)$ is a local ring, with the maximal ideal $n_x=\{f_x(x)=0\}$.
We then define the space $J^k_x(E)$ of $k$-jets
at $x$ as the quotient $\mathcal O_{k,x}(E)/n_x^{k+1}\mathcal O_{k,x}(E)$. We denote the projection
\[J^k(x): \mathcal O_{k, x}(E)\to J^k_x(E)\] and for $k<n$ one also has the natural projections
\[P(k)(x):J^{k+1}_x\to J^k_x\] We then topologize the disjoint union $J^k=\bigcup_{x\in M}J^k_x$
by requiring all set-theoretic sections of the form $\sum_{j=1}^N a_j(x)J^kf_j(x)$ to be continuous, for $a_j\in C(M)$
and $f_j\in \Gamma(\mathcal O_k(E))$.
\\Informally, $J^kf(x)$ is the $k$-th Taylor polynomial of $f$, in a coordinate-free notation.
For instance, $J^0(E)=E$, and $J^1(M\times\R)=(M\times\R)\oplus T^*M$.
\\\\2. It is well known that if $x\in M$ is a critical point for a function $f\in C^2(M)$, i.e. $d_xf=0$,
then the Hessian of $f$ is well defined.
For a general bundle $E$, the same happens already in the first order: the 1-jet of a germ $s_x\in \mathcal O_{k,x}(E)$
is well defined at $x\in M$ (it is an element of $T_x^*M\otimes E_x$), given that $s(x)=0$. Proceeding by induction, one can show that
the notion $J^k_xs_x=0$ is well defined for all finite $k$, as well as for $k=\infty$.
The space of $k$-jets at $x$ is then defined as the quotient $\mathcal O_{k,x}(E)/\{s_x: J^{k+1}_xs_x=0\}$.
\\\\For $k=\infty$ (assuming $M$ is $C^\infty$) only the second definition applies.
Alternatively, one could generalize definition 1 as follows:
define the space $J_x^\infty (E)$ of $\infty$-jets at $x$ as the $n_x$-adic completion
of $\mathcal O_{k,x}(E)$. This is just the inverse limit of $J^k_x$ through the projections $P(k)$. Thus,
\[J_x^\infty (E)=\{(v_j)_{j=0}^\infty: P(j)(v_{j+1})=v_j\}\]
The equivalence of the two definition is known as Borel's lemma.
The set-theoretic bundle of jets $J=J^\infty$ will be the disjoint union of all fibers
$J_x^\infty (E)$. We denote $Jf=J^\infty f$ the section of $J$ defined by a $C^\infty$ section $f$ of $E$.
For $0\leq j<k\leq \infty$, denote $P(k,j):J^k\to J^j$ the
natural projection map, and $Q(j,k)=P(k,j)^*:(J^j)^*\to (J^k)^*$.
Also, denote $P_k=P(\infty, k)$, $Q_k=P_k^*$.  We
will assume that some inner product is chosen on $J^n$ for
$n<\infty$.
\\\\In general, there is no canonic map $J^k_x\to J^n_x$ when $k<n$. However,
when $k=0$ and $E=M\times V$ for some vector space $V$,
such a map does exist: We have $J^0_x\simeq E_0x\simeq V$ and all isomorphisms are canonical,
so one can choose a constant function with the given value and consider its $n$-jet. Such jets are called constant jets.
\\\\We list a few more basic properties of jets, which will not be used in the paper.
Assume that $E$ is trivial over $M$, with fiber either $\R$ or $\C$. The fiber $J^n_x$ with
$0\leq n\leq \infty$ is then naturally a local ring, with maximal ideal $
m_x=\{J^nf: f(x)=0\}= Ker P(n,0)$, $m_x^k=Ker P(n,k)$. Thus we get
an $m_x$-filtration of $J^n_x$. The induced
$m_x$-adic topology on $J^n_x$ is Hausdorff by definition, and
complete.
\subsection{$S^1$-bundles}
We would like to discuss separately the case $E=M\times S^1$, where
$S^1$ is considered as a Lie group. We make the following
definition: For $f=e^{i\theta} \in C^k(M, S^1)$, $J^kf:=J^k\theta$.
Note that $J^0f$ is only defined up to $2\pi$. Further, note that a
functional $c:J^k_x(M\times S^1)\to\R$ is induced from a functional
$\tilde c: J_x^k(M\times\R)\to\R$ which vanishes on constant jets.
Denote the bundle of such functionals by $(J^k)^*(M\times S^1,\R)$.
We also define a linear character $\chi:J^k_x(M\times S^1)\to S^1$
by $\chi=\exp(ic)$ where $c:J_x^k(M\times\R)\to\R$ is a linear
functional s.t. $c(v)=mv$ for constant jets $v$, for some fixed
$m\in\mathbb Z$. The bundle of linear characters $\chi$ (as well as
that of the corresponding functionals $c$) is denoted
$(J^k)^*(M\times S^1, S^1)$.

\subsection{Main Lemmas}
\begin{lem} Fix a $C^s$ manifold $M$, $0\leq s\leq\infty$, and a $C^s$ real vector bundle $E$ over $M$.
Assume $B:J^s(E) \to \R$  satisfies the following
conditions:
\\ (1) $B(x,u+v)=B(x,u)+B(x,v)$ for all $x\in M$, $u,v\in J^s_x$.
\\ (2) For all $f\in \Gamma^s_c(E)$, one has $B(J^s f)\in C(M)$.
\\ Then $B$ is linear in every fiber.
\end{lem}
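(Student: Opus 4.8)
The plan is to reduce the statement to the classical fact that an additive function $\R\to\R$ which is not $\R$-linear is discontinuous at every point, and to extract the missing regularity from hypothesis (2). Since $B(x,\cdot)$ is additive on the $\R$-vector space $J^s_x$, it is automatically $\Q$-linear, so that $\R$-linearity amounts to homogeneity along each line: it suffices to fix $x_0\in M$ and a single jet $w\in J^s_{x_0}$ and to prove that $\phi(\lambda):=B(x_0,\lambda w)$ is linear in $\lambda$. Choosing, by Borel's lemma (for $s=\infty$) or by a polynomial times a bump (for $s<\infty$), a section $f\in\Gamma^s_c(E)$ with $J^sf(x_0)=w$, and using that constant rescaling commutes with the jet operation, $\lambda J^sf(x)=J^s(\lambda f)(x)$, I introduce the linearity defect
\[ g_\lambda(x):=B\big(J^s(\lambda f)\big)(x)-\lambda\,B(J^sf)(x). \]
By hypothesis (2) each $g_\lambda$ is continuous on $M$; by additivity of $B$ in the fibre, $\lambda\mapsto g_\lambda(x)$ is additive for every $x$; and $g_q\equiv 0$ for rational $q$. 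The goal becomes $g\equiv 0$, which yields $\phi(\lambda)=\lambda\,B(J^sf)(x_0)$.

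The heart of the argument is to convert the $x$-continuity of the $g_\lambda$ into regularity in $\lambda$. Suppose $g_{\lambda_0}(x_0)=a\neq 0$ for some (necessarily irrational) $\lambda_0$. Then $\lambda\mapsto g_\lambda(x_0)$ is a nonzero additive function vanishing on $\Q$, hence discontinuous at $\lambda_0$, so there is a sequence $\lambda_n\to\lambda_0$ with $g_{\lambda_n}(x_0)\to b$ for some $b\neq a$ (a priori $b=\pm\infty$ is allowed). Using the continuity of each individual $g_{\lambda_n}$ at $x_0$, I select points $x_n\to x_0$ with $|g_{\lambda_n}(x_n)-g_{\lambda_n}(x_0)|<1/n$, so that $g_{\lambda_n}(x_n)\to b$ as well. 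I then build a single section $F\in\Gamma^s_c(E)$ with $J^sF(x_0)=\lambda_0 w$ and $J^sF(x_n)=\lambda_n\,J^sf(x_n)$ for all $n$, for instance by letting $F$ coincide with $\lambda_n f$ on a small neighbourhood of $x_n$ and with $\lambda_0 f$ near $x_0$, patched by cut-offs on disjoint shrinking neighbourhoods. For this $F$, hypothesis (2) makes $B(J^sF)$ continuous, while by construction and the definition of the defect
\[ B(J^sF)(x_n)=\lambda_n\,B(J^sf)(x_n)+g_{\lambda_n}(x_n),\qquad B(J^sF)(x_0)=\lambda_0\,B(J^sf)(x_0)+g_{\lambda_0}(x_0). \]
Letting $x_n\to x_0$ and using $\lambda_n\to\lambda_0$ together with the continuity of $B(J^sf)$ and of $B(J^sF)$, the left-hand sides converge to the single finite limit $B(J^sF)(x_0)$, which forces $b=a$ (and in particular $b$ finite), a contradiction. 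Hence $g\equiv 0$, so $B(x_0,\cdot)$ is homogeneous along the line through $w$; as $x_0$ and $w$ were arbitrary, $B$ is linear on every fibre.

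The main obstacle is the construction of the section $F$: the prescribed jets $\lambda_n J^sf(x_n)$ must glue to an honest $C^s$ (resp.\ $C^\infty$, resp.\ Schwartz) function that is genuinely $s$-times differentiable at the accumulation point $x_0$. The patching cut-offs have derivatives of size comparable to the inverse $s$-th power of the gap around $x_n$, so keeping $F$ in class $C^s$ at $x_0$ requires $\lambda_n-\lambda_0$ to be small relative to those gaps; this competes with the earlier requirement that $x_n$ be close enough to $x_0$ for the base-continuity bound $|g_{\lambda_n}(x_n)-g_{\lambda_n}(x_0)|<1/n$. I would resolve the tension by choosing the data inductively: first use the discontinuity of $\lambda\mapsto g_\lambda(x_0)$ to pick $\lambda_n$ as close to $\lambda_0$ as needed (the values $g_{\lambda_n}(x_0)$ can be kept near $b$ while $|\lambda_n-\lambda_0|$ is made arbitrarily small), and only then choose $x_n$ and its neighbourhood within the radius dictated by the continuity of $g_{\lambda_n}$, so that both the gluing compatibility at $x_0$ and the base-continuity bound hold at once. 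A clean alternative is to invoke Whitney's extension theorem (as already used in Step 3 of the proof of Proposition \ref{propT_u123}) directly for the jet field prescribed on the compact set $\{x_0\}\cup\{x_n\}$, the compatibility condition at $x_0$ being exactly the quantitative smallness of $\lambda_n-\lambda_0$ relative to $|x_n-x_0|^s$.
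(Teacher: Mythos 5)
Your reduction to homogeneity along a single line, the introduction of the defect $g_\lambda(x)=B(J^s(\lambda f))(x)-\lambda B(J^sf)(x)$, and the use of the dense graph of the non-linear additive function $\lambda\mapsto g_\lambda(x_0)$ are all fine, and for $s=0$ your argument actually closes (there the gluing only needs $\lambda_n\to\lambda_0$). But for $s\geq 1$ the construction of $F$ contains a genuine circularity that your proposed inductive ordering does not break. As you yourself compute, keeping the glued section in $C^s$ at the accumulation point $x_0$ with $J^sF(x_0)=\lambda_0 w$ forces the quantitative condition $|\lambda_n-\lambda_0|=o(|x_n-x_0|^s)$ (and $o(|x_n-x_0|^j)$ for every $j$ when $s=\infty$). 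On the other hand, $x_n$ must be chosen inside the set $V_n=\{x:|g_{\lambda_n}(x)-g_{\lambda_n}(x_0)|<1/n\}$, which is an open neighbourhood of $x_0$ \emph{depending on} $\lambda_n$, and whose inradius $\rho(\lambda_n)$ you cannot bound from below: the family $\{g_\lambda\}$ has no equicontinuity in $\lambda$. So after $\lambda_n$ is fixed you are forced to take $|x_n-x_0|<\rho(\lambda_n)$, and nothing prevents $\rho(\lambda_n)^s$ from being smaller than $|\lambda_n-\lambda_0|$, in which case the Whitney compatibility fails; and you cannot then shrink $|\lambda_n-\lambda_0|$ further without changing $g_{\lambda_n}$ and hence $\rho(\lambda_n)$. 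Choosing $\lambda_n$ ``as close to $\lambda_0$ as needed'' is not meaningful, because ``as needed'' is measured against a radius that only becomes known after $\lambda_n$ is chosen. The Whitney-extension variant has exactly the same defect, since, as you note, the compatibility condition at $x_0$ \emph{is} the smallness of $\lambda_n-\lambda_0$ relative to $|x_n-x_0|^s$.

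The paper avoids this trap by proving a weaker statement first and upgrading it afterwards. Step one shows only that the set $A$ of points where $B(x,\cdot)$ is non-linear has no accumulation point: given $x_k\to x_\infty$ in $A$, one builds disjointly supported sections $f_k$ with $J^sf_k(x_k)=\epsilon_kv_k$ and $\|f_k\|_{C^{\min(k,s)}}<2^{-k}$ \emph{first}, and only afterwards picks scalars $t_k\in(0,1)$ with $|B(x_k,t_k\epsilon_kv_k)|>1$, which is possible for arbitrarily small $t_k$ because a non-linear additive function is unbounded on every interval around $0$. The crucial structural difference from your argument is that $B$ is evaluated at the very point $x_k$ where the jet is prescribed, so no continuity modulus in the scalar parameter ever enters, and the scalar $t_k$ is subject only to an upper bound, never a lower one. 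Step two then handles an arbitrary point $x_\infty$ by approximating it with points $x_k\notin A$: there $B(x_k,tJ^sf(x_k))=tB(x_k,J^sf(x_k))$ is continuous in $t$, so $B(x_\infty,tv)$ is a pointwise limit of continuous functions, hence measurable, and the Banach--Sierpi\'nski theorem (measurable plus additive implies linear) finishes the proof. To repair your argument you would essentially have to adopt this two-step structure; the one-shot attack at a fixed $x_0$ does not go through for $s\geq 1$.
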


\noindent{\bf Proof.}

Let $A=\{x\in M | B(x,\cdot): J_x^s\to\mathbb R\mbox{
is non-linear}\}$.
First we prove that $A$ has no accumulation points in $M$ .\\
Assume the contrary, i.e. $A\ni x_k\rightarrow x_\infty$. We can
assume that $x_k\neq x_l$ for $1\leq k<l\leq \infty$. By the
assumption, there exists a sequence $v_k\in J_{x_k}^s$ such
that the functions $B(x_k, tv_k)$ are additive and non-linear in
$t$.  We will construct in \ref{subsec:functions} a sequence of sections $f_k\in \Gamma^s(E)$ such that:
\\(a) $f_k$ is supported in a small neighborhood of $x_k$, such that
$\Supp(f_k)\cap \Supp(f_l)=\emptyset$ for $k\neq l$, and
$x_\infty\notin \Supp(f_k)$, for all $k<\infty$.
\\(b) $J^sf_k(x_k)=\epsilon_k v_k$ for some $0<\epsilon_k<1$.
\\(c) $|J^{\min(k,s)}f_k(x)|<2^{-k}$ for all $x\in M$ and $k<\infty$.
\\Given that, choose $0<t_k\to 0$ such that $|B(x_k, t_k \epsilon_k
v_k)|> 1$, using that non-linear additive functions are not locally bounded, and consider $f=\sum_{k=1}^\infty t_k f_k$. By condition
(c), $f\in \Gamma^s(E)$, and by (a), (b), $J^sf(x_k)=t_kJ^sf_k(x_k)=t_k\epsilon_k
v_k$ and $J^sf(x_\infty)=0$. Thus $B(x_k, t_kJ^sf(x_k))\to B(x_\infty,
J^sf(x_\infty)$ by condition (2) on $B$, but $|B(x_k, t_kJ^sf(x_k)|>1$
while $B(x_\infty, 0)=0$, a contradiction.
\\\\Next, we show that $A$ is empty. Indeed, take any sequence $x_k\notin
A$ converging to arbitrary $x_\infty\in M$. Fix any $v\in
J^s_{x_\infty}$, and choose $f\in \Gamma^s(E)$ with
$J^sf(x_\infty)=v$. Then for all $t\in\R$, $B(x_k, tJ^sf(x_k))\to
B(x_\infty, tJ^sf(x_\infty))$, i.e. $B(x_\infty, tv)$ is a pointwise
limit of continuous functions (of $t\in\R$), thus a measurable
function. Next use a theorem of Banach and Sierpinski, see \cite{Banach} and \cite{Sier} which states that if $B(x_\infty, tv)$ is a measurable function of $t$ and $B(x_\infty, \cdot)$
is additive, then it must be linear. $\hfill \square$

\begin{lem}\label{lemma6.2} Under the conditions of the lemma above
\\ (1) If $s<\infty$ then $B$ is a continuous section of $(J^s)^*$.
\\ (2) If $s=\infty$ then $B$ is locally finite dimensional and continuous in the following sense:
Denoting $F_n=\{x: B(x,\cdot)\in Image(Q_n)_x\}$, one has $F_n\subset F_{n+1}$ are
closed sets, and for all $K\subset M$ compact, there exists an $n$ such that $K\subset F_n$  and $B=Q_n (c_n)$ where $c_n$ is a continuous section of $(J^n)^*$ over $K$.
\end{lem}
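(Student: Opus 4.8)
By the preceding lemma, $B(x,\cdot)$ is a genuine linear functional on $J^s_x$ for every $x\in M$, so $B$ is already a set-theoretic section of $(J^s)^*$ and only its regularity is at issue. I would dispose of part (1), the case $s<\infty$, first. A section $c$ of the (finite rank) bundle $(J^s)^*$ is continuous precisely when $x\mapsto\langle c(x),\tau(x)\rangle$ is continuous for every continuous section $\tau$ of $J^s$, and by the very definition of the topology on $J^s$ every such $\tau$ is locally of the form $\sum_j a_j(x)J^sf_j(x)$ with $a_j\in C(M)$ and $f_j\in\Gamma^s(E)$. Using fibrewise linearity, $B\big(x,\sum_j a_j(x)J^sf_j(x)\big)=\sum_j a_j(x)\,B(x,J^sf_j(x))$, so it suffices to see that each $x\mapsto B(x,J^sf_j(x))$ is continuous; continuity being local, I may cut off $f_j$ to a compactly supported section without changing its germ near a given point, whereupon condition (2) applies verbatim. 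Hence $B\in\Gamma((J^s)^*)$ is continuous.

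For part (2), let $s=\infty$. The inclusions $F_n\subset F_{n+1}$ are immediate from $P_n=P(n+1,n)\circ P_{n+1}$. For $x\in F_n$ write $B(x,\cdot)=Q_n(c_n(x))=c_n(x)\circ P_n$ with $c_n(x)\in(J^n_x)^*$ uniquely determined. To see that $F_n$ is closed, take $x_k\to x_\infty$ with $x_k\in F_n$. In a local trivialisation of $J^n$ near $x_\infty$, choose finitely many $f_i\in\Gamma^\infty_c(E)$ whose $n$-jets at $x_\infty$ form a basis of $J^n_{x_\infty}$; by continuity their $n$-jets at $x_k$ stay a uniformly non-degenerate basis for large $k$. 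Since each $x\mapsto B(x,J^\infty f_i(x))$ is continuous, hence locally bounded, and equals $c_n(x_k)(J^nf_i(x_k))$ on $F_n$, the norms of the $c_n(x_k)$ stay bounded. Consequently, for any germ $g$ vanishing to order $n$ at $x_\infty$ one has $B(x_k,J^\infty g(x_k))=c_n(x_k)(J^ng(x_k))\to 0$, because $J^ng(x_k)\to J^ng(x_\infty)=0$; by (2) the left side tends to $B(x_\infty,J^\infty g(x_\infty))$, so $B(x_\infty,\cdot)$ annihilates $\ker P_n(x_\infty)$, i.e.\ $x_\infty\in F_n$. In particular $N(x):=\min\{n:x\in F_n\}$ is lower semicontinuous.

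It remains to show that every compact $K$ lies in some $F_n$, equivalently that $N$ is bounded on $K$. Suppose not: there are $x_j\in K$ with $N(x_j)\to\infty$, and by compactness I may assume $x_j\to x_\infty\in K$; lower semicontinuity (which forces $N(y)\to\infty$ as $y\to x_\infty$ whenever $N(x_\infty)=\infty$) lets me take the $x_j$ distinct, thereby also subsuming the degenerate case of infinite order at a single point. Passing to a subsequence so that $N(x_j)-1\ge j$, I pick $v_j\in\ker P_{N(x_j)-1}(x_j)$ with $B(x_j,v_j)=1$, so $v_j$ is the $\infty$-jet at $x_j$ of a germ vanishing there to order $N(x_j)-1$. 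Using the bump-section construction deferred to \ref{subsec:functions}, I realise $v_j$ by a section $h_j\in\Gamma^\infty_c(E)$ supported in a ball of radius $r_j$ about $x_j$, with the balls pairwise disjoint and avoiding $x_\infty$. Since $h_j$ vanishes at $x_j$ to order $N(x_j)-1\ge j$, shrinking $r_j$ makes every derivative of order $\le j$ arbitrarily small; I choose $r_j$ so that $\|h_j\|_{C^j}<2^{-j}$. Then $f=\sum_j h_j$ converges in each $C^m$-norm, so $f\in\Gamma^\infty_c(E)$, with $J^\infty f(x_\infty)=0$ and $J^\infty f(x_j)=v_j$. Thus $B(x_j,J^\infty f(x_j))=1$ for all $j$ while $B(x_\infty,J^\infty f(x_\infty))=0$, contradicting the continuity of $x\mapsto B(x,J^\infty f(x))$ from (2). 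Hence $N$ is bounded on $K$, say $K\subset F_n$, and on $K$ the identity $B(x,\cdot)=c_n(x)\circ P_n$ exhibits $B=Q_n(c_n)$, with $c_n$ continuous over $K$ by exactly the argument of part (1) applied to the finite-jet bundle $J^n$.

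I expect the construction in the third paragraph to be the main obstacle. Because $B$ is now linear, one cannot reach a contradiction by invoking the unboundedness of a nonlinear additive function along a single line, as in the preceding lemma; instead the contradiction must come from engineering one genuinely smooth, compactly supported $f$ whose normalised jets at the $x_j$ are all detected by $B$ with value $1$. The delicate point is to shrink the supports fast enough to keep $f\in C^\infty$ while preserving the normalisation $B(x_j,v_j)=1$, and it is precisely the high vanishing order $N(x_j)\to\infty$ that reconciles these two demands.
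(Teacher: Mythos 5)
Your proof is correct and follows essentially the same route as the paper's: closedness of $F_n$ via boundedness of the functionals $c_n(x_k)$ paired against a local jet basis, continuity of $c_n$ by testing against (polynomial) sections, and the covering of compact sets by a contradiction built from a single section $f=\sum h_j$ realizing normalized jets $v_j\in\ker P_{N(x_j)-1}$ at distinct points converging to $x_\infty$ --- exactly the Borel-type construction of Subsection \ref{subsec:functions}. The only difference is cosmetic: you merge the paper's separate steps (no accumulation points of the infinite-order set $C$, emptiness of $C$, compact exhaustion) into one argument organized around the lower semicontinuous function $N$.
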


\noindent{\bf Proof.}
(1) Fix some coordinate chart $U\subset M$, with a
trivialization of $E$ over $M$. Take $c_s(x)\in (J^s_x)^*$ s.t. $B(x,
v)=c_s(x)(P_s(v))=\sum_{|\alpha|\leq s} a_{\alpha, \beta}(x)
v_\beta^{(\alpha)}$ (here $\alpha$ parametrizes the order of the
derivative, and $\beta$ the coordinate in $E_x$) for $x\in U$, $v\in\ J_x^s$. Take $x_k\to x_\infty$ within $U$, with $x_\infty$ at
the origin. We claim that $a_{\alpha,\beta}(x_k)\to a_{\alpha,\beta}(x_\infty)$: this is straightforward by condition (2) if one
considers polynomial sections. Thus $c_s$ is continuous.\\
\noindent (2) Step 1. Let $C=\{x\in M | B(x,\cdot)\notin Image(Q_n) \mbox{ for
all } n<\infty\}$. We prove that $C$ has no accumulation points.
Like before, assume the contrary, i.e. $C\ni x_k\rightarrow
x_\infty$, and all $x_k$ are different for $k\leq \infty$. Choose a
sequence $v_k\in J_{x_k}^\infty$ with $P_k(v_k)=0$ and $B(x_k,
v_k)=1$. Again, we construct in \ref{subsec:functions} a sequence of functions
$g_k\in\Gamma^\infty(E)$ such that:
\\(a) $g_k$ is supported in a small neighborhood of $x_k$, such that
$\Supp(g_k)\cap \Supp(g_l)=\emptyset$ for $k\neq l$, and
$x_\infty\notin \Supp(g_k)$, for all $k$.
\\(b) $Jg_k(x_k)=v_k$.
\\(c) $|J^kg_k(x)|<2^{-(k-1)}$ for all $x\in M$.
\\Now consider $g=\sum_{k=1}^\infty g_k$. By condition (c), $g\in
\Gamma^\infty(E)$, $Jg(x_k)=Jg_k(x_k)=v_k$ and $Jg(x_\infty)=0$.
Thus $1=B(x_k, Jg(x_k))\to B(x_\infty, Jg(x_\infty))=0$ by condition
(2) on $B$, a contradiction.

\noindent
Step 2. Fix some coordinate chart $U\subset M$, with a
trivialization of $E$ over $M$. Then for $x\in F_n$ one has
$c_n(x)\in (J^n_x)^*$ s.t. $B(x,
v)=c_n(x)(P_n(v))=\sum_{|\alpha|\leq n} a_{\alpha, \beta}(x)
v_\beta^{(\alpha)}$ (here $\alpha$ parametrizes the order of the
derivative, and $\beta$ the coordinate in $E_x$) for $x\in F_n\cap
U$, $v\in\ J_x^\infty$. Take $x_k\to x_\infty$ within $U$, with
$x_k\in F_n$. We will show that $a_{\alpha,\beta}(x_k)$ converges as
$k\to\infty$: this is straightforward by condition (2) if one
considers polynomial sections. And so $c_n$ extends to a continuous
section of $(J^n)^*$ on the closure of $F_n$. This implies that for
all $v\in J^\infty_x$ one can choose any $f$ with $Jf(x_\infty)=v$,
and then by condition (2)
\[B(x_\infty, v)=B(x_\infty, Jf(x_\infty))=
\lim_{k\to\infty}B(x_k, Jf(x_k)) =\lim_{k\to\infty}c_n(x_k)
(J^nf(x_k))=\]
 \[=c_n(x_\infty)(J^nf(x_\infty))
=c_n(x_\infty) (v)\] i.e. $x_\infty\in F_n$. Thus, $F_n$ is closed.

\noindent
Step 3. We now show that $C$ is in fact empty. Assume otherwise, and
take $M\setminus C\ni x_k\to x_\infty$, $x_\infty\in C$. Since $F_k$
are all closed sets, we may assume $x_k\in M\setminus F_k$. We may
further assume that all $x_k$ are distinct. Now we simply repeat the
construction of step 1: Choose a sequence $v_k\in J_{x_k}^\infty$
with $P_k(v_k)=0$ and $B(x_k, v_k)=1$. Choose the functions $g_k$,
$g$ as before. Thus $Jg(x_k)=v_k$ and $Jg(x_\infty)=0$, so $1=B(x_k,
Jg(x_k))\to B(x_\infty, Jg(x_\infty))=0$ by condition (2) on $B$, a
contradiction.

\noindent
Step 4. Finally, if $K\subset M$ is compact, and $F_k\cap K$ is a
strictly increasing sequence of subsets, one can choose a converging
sequence $K\setminus F_k\ni x_k\to x_\infty$, which is impossible by
step 3. Thus $K\subset F_n$ for large $n$.

$\hfill \square$

\subsection{The construction of the function families.}\label{subsec:functions}
We finally construct the functions $f_k$, $g_k$ which we used in the lemmas above, for some fixed
$k$. We do this by mimicking the proof of Borel's lemma. Since the construction is local, we can assume $E$ is trivial.
For simplicity, we further assume $M=\R$, and $E=M\times\R$. We will denote
$v=v_k$. For convenience, assume $x_k=0$. Take
\[\delta=\min\left(\frac{1}{2}\min\{|x_m-x_k| : 1\leq m\leq
\infty, m\neq k.\}\right)\] Fix a smooth, non-negative function
$h\in C^\infty(\R)$ such that $h(x)=0$ for $|x|>\delta$, and
$h(x)=1$ for $|x|\leq \delta/2$. Let $\psi_n(x)=x^nh(x)$, then
$\psi_n^{(j)}(0)=n!\delta_n^j$.
\\Define \[\lambda_n=\max\{1, |v^{(j)}|, \sup|\psi_n|, \sup |\psi'_n|,...,\sup
|\psi^{(n)}_n|\}\]
\[\phi_n=\frac{v^{(n)}}{n!\lambda_n^n}\psi_n(\lambda_nx)\]
Then
\[\phi_n^{(j)}=\frac{v^{(n)}}{n!\lambda_n^{n-j}}\psi_n^{(j)}(\lambda_nx)\] and $\phi_n^{(j)}(0)=v^{(j)}\delta_n^j$.
Take $H=\sum_{n=0}^\infty \phi_n(x)$. This is a $C^\infty$ function,
since
\[\sum |\phi_n^{(j)}|\leq \sum_{n=0}^{j+1}\frac{|v^{(n)}|}{n!\lambda_n^{n-j}}\sup|\psi_n^{(j)}|
+\sum_{n=j+2}^\infty\frac{1}{n!}\frac{1}{\lambda_n^{n-j-2}}\frac{|v^{(n)}|}{\lambda_n}\frac{\sup|\psi_n^{(j)}|}{\lambda_n}\]
and $JH(0)=v$. \\Now we consider our two cases separately: For an
arbitrary $v_k\in\R^\infty$, take
\[\epsilon_k=\frac{1}{2^k(1+\max\{\sup|H|, \sup|H'|,...,\sup|H^{(k)}|\})}\] and $f_k=\epsilon_k H$ satisfies (a)-(c).
\\For $v_k\in Ker(P_k)$, we have for all $j<k$
\[|H^{(j)}|\leq \sum_{n=k+1}^\infty\frac{1}{n!}\leq \frac{1}{2^{k-1}}\]
 so $g_k=H$ satisfies (a)-(c) $\hfill \square$

\subsection {Replacing $\R$ with $S^1$}
Next we state without proof several variants of the lemmas above. To see how the proofs above adapt to those $S^1$ cases, see \cite{AAFM}.

\begin{lem}\label{lemRS} Fix a $C^\infty$ manifold $M$, and $E=M\times\R$.
Assume $B:J^\infty \to S^1$ (here $S^1$ is a Lie group) satisfies the following
conditions:
\\ (1) $B(x,u+v)=B(x,u)B(x,v)$ for all $x\in M$, $u,v\in J^\infty_x$.
\\ (2) For all $f\in C_c^\infty(M)$, one has $B(J^\infty f)\in C(M)$.
\\ Then $B(x,u)=\exp(i\langle c(x), u\rangle)$ where $c\in J^*$. Moreover, $M=\bigcup_{n=0}^\infty F_n$ where $F_n=\{x:
c(x)\in Image(Q_n)_x\}$. The sets $F_n\subset F_{n+1}$ are
closed, and for all $K\subset M$ compact, one has $K\subset F_n$ for
some $n$, and $B$ is a continuous section of $(J^n)^*$ over $K$.
\end{lem}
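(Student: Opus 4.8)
The plan is to reduce the statement to the real-valued case already settled in Lemma \ref{lemma6.2}, by producing a fiberwise logarithm of $B$. The first task is to show that for every $x\in M$ there is an $\R$-linear functional $c(x)\in(J^\infty_x)^*$ with $B(x,v)=\exp(i\langle c(x),v\rangle)$. Fix $x$ and $v$: the map $t\mapsto B(x,tv)$ is a homomorphism $(\R,+)\to S^1$, and on any line where this homomorphism is (Lebesgue) measurable it equals $e^{i\lambda(x,v)t}$ for a unique $\lambda(x,v)\in\R$; the homomorphism property of $B$ together with reparametrization then forces $\lambda(x,\cdot)$ to be additive and $\R$-homogeneous where defined, and setting $t=1$ gives $B(x,v)=e^{i\lambda(x,v)}$. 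So everything hinges on ruling out \emph{wild} one-parameter homomorphisms into $S^1$, and here I would mirror the lemma preceding Lemma \ref{lemma6.2}. Letting $A$ be the set of $x$ at which $t\mapsto B(x,tv)$ fails to be measurable on some line, I would show $A$ has no accumulation points by repeating the construction of Subsection \ref{subsec:functions}: if $A\ni x_k\to x_\infty$, choose $v_k$ on which $t\mapsto B(x_k,tv_k)$ is non-measurable; the $S^1$-analog of ``non-linear additive functions are not locally bounded'' is that such a homomorphism has image dense in $S^1$ on every neighborhood of $0$, so one may pick $t_k\to0$ with $B(x_k,t_k\epsilon_k v_k)$ bounded away from $1$, assemble $f=\sum_k t_k f_k\in\Gamma^\infty(E)$, and contradict continuity of $B(J^\infty f)$ at $x_\infty$, where $J^\infty f(x_\infty)=0$ and $B(x_\infty,0)=1$. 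On the dense complement of $A$ the maps $t\mapsto B(x_k,tv)$ are continuous, so at an arbitrary limit point $x_\infty$ the map $t\mapsto B(x_\infty,tv)$ is a pointwise limit of continuous functions, hence measurable, hence (by the classification of measurable homomorphisms $\R\to S^1$, the $S^1$-version of the theorem of Banach and Sierpinski) of the form $e^{i\lambda t}$. Thus $A=\emptyset$ and $c$ is defined and fiberwise linear on all of $M$.

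The second, and I expect hardest, step is to verify that $c$ satisfies hypothesis (2) of Lemma \ref{lemma6.2}, namely that $x\mapsto\theta_f(x):=\langle c(x),J^\infty f(x)\rangle$ lies in $C(M)$ for every $f\in C^\infty_c(M)$. The obstruction is precisely the $2\pi\Z$ winding ambiguity absent from the real-valued lemmas: hypothesis (2) on $B$ only tells us that $x\mapsto\exp(i\theta_f(x))$ is continuous, i.e. that $\theta_f$ is continuous \emph{modulo} $2\pi$, which a priori permits $c$ itself to jump. To upgrade this I would exploit that $tf\in C^\infty_c(M)$ for every $t\in\R$, so that $x\mapsto\exp(it\,\theta_f(x))$ is continuous for \emph{every} $t$. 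Given $x_k\to x_\infty$, integrating $\exp(it\,\theta_f(x_k))$ over intervals and combining dominated convergence with Riemann--Lebesgue decay shows that the sequence $\theta_f(x_k)$ cannot diverge; boundedness together with pointwise convergence of $\exp(it\,\theta_f(x_k))$ for all $t$, and the rigidity that $ts\in2\pi\Z$ for all $t$ forces $s=0$, then yields $\theta_f(x_k)\to\theta_f(x_\infty)$. Hence $\theta_f\in C(M)$, so $c$ obeys both hypotheses of the real-valued lemmas.

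With the fiberwise-linear section $c$ now satisfying hypotheses (1) and (2) of Lemma \ref{lemma6.2}, applied to the additive real-valued functional $(x,v)\mapsto\langle c(x),v\rangle$, the structural conclusions follow immediately: $c$ is a section of $J^*$ that is locally finite dimensional and continuous, the sets $F_n=\{x:c(x)\in\mathrm{Image}(Q_n)_x\}$ form a closed increasing exhaustion $M=\bigcup_n F_n$, and every compact $K\subset M$ lies in some $F_n$ with $c=Q_n(c_n)$ for a continuous section $c_n$ of $(J^n)^*$ over $K$. Substituting back into $B(x,v)=\exp(i\langle c(x),v\rangle)$ finishes the proof. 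The main obstacle throughout is the second step, the passage from continuity of the $S^1$-valued $B$ to continuity of the $\R$-valued $c$; this is exactly the feature that distinguishes the present statement from the real-valued lemmas, and the dominated-convergence/rigidity argument is designed to overcome it.
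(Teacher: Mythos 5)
Your proposal is correct, and it follows the route the paper itself prescribes: the paper states Lemma \ref{lemRS} without proof, remarking only that the arguments for the real-valued lemmas of Appendix A adapt to the $S^1$ case (referring to \cite{AAFM}), and your write-up is exactly that adaptation --- ruling out non-measurable one-parameter homomorphisms into $S^1$ by the same accumulation-point/function-family construction, then reducing to Lemma \ref{lemma6.2}. The one genuinely new ingredient, recovering continuity of the real-valued functional $\theta_f=\langle c(\cdot),J^\infty f\rangle$ from continuity of $e^{i\theta_f}$ only modulo $2\pi$, you handle correctly by using $tf\in C_c^\infty(M)$ for all $t$ and the rigidity that $e^{ita_k}\to e^{ita_\infty}$ for every $t$ forces $a_k\to a_\infty$.
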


\begin{lem}\label{lemSR} Fix a $C^\infty$ manifold $M$, and $E=M\times S^1$.
Assume $B:J^\infty (E) \to \R$ satisfies the following
conditions:
\\ (1) $B(x,uv)=B(x,u)+B(x,v)$ for all $x\in M$, $u,v\in J^\infty_x$.
\\ (2) For all $f\in C_c^\infty(M, S^1)$, one has $B(J^\infty f)\in C(M)$.
\\ Then $B(x,v)=\langle c(x), v\rangle)$ where $c\in J^*$ vanishes on constant jets.
Moreover, $M=\bigcup_{n=0}^\infty F_n$ where $F_n=\{x:
c(x)\in Image(Q_n)_x\}$. The sets $F_n\subset F_{n+1}$ are
closed, and for all $K\subset M$ compact, one has $K\subset F_n$ for
some $n$, and $B$ is a continuous section of $(J^n)^*$ over $K$.
\end{lem}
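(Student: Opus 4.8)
The plan is to run the proofs of the first lemma of Appendix~A and of Lemma~\ref{lemma6.2} almost verbatim, replacing the vector bundle $E=M\times\R$ by the $S^1$-bundle $E=M\times S^1$. Now each fiber $J^\infty_x(M\times S^1)$ is an abelian group under the operation induced by multiplication in $S^1$ (concretely, addition of phase-jets, with the $0$-jet living in $\R/2\pi\Z$), and $B(x,\cdot)$ is a homomorphism of this group into $(\R,+)$. The device that lets me reuse the earlier constructions is the defining identity $J^\infty(e^{i\theta})=J^\infty\theta$: every jet in $J^\infty_x(M\times S^1)$ is realized by some $e^{i\theta}$ with $\theta$ real-valued, and every real test function produced in~\ref{subsec:functions} may be exponentiated to a genuine element of $C_c^\infty(M,S^1)$ carrying the same jet (it equals $1$ outside a compact set precisely because the real function has compact support).

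First I would establish fiberwise linearity. Let $A=\{x: B(x,\cdot)\text{ is a non-linear homomorphism on }J^\infty_x\}$. As in the first lemma of Appendix~A, I would show $A$ has no accumulation point: from a hypothetical $A\ni x_k\to x_\infty$ with non-linear directions $v_k$ I would take the functions $\theta_k$ of~\ref{subsec:functions}, with disjoint supports avoiding $x_\infty$, jets $J^\infty\theta_k(x_k)=\epsilon_k v_k$, and decaying $C^k$-norms, and form $f=\exp\!\big(i\sum_k t_k\theta_k\big)\in C_c^\infty(M,S^1)$; choosing $t_k\to 0$ with $|B(x_k,t_k\epsilon_k v_k)|>1$ (possible since non-linear additive maps are locally unbounded) contradicts condition~(2). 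Density of $M\setminus A$ then lets me show $A=\emptyset$: for $x_k\notin A$, $x_k\to x_\infty$, and any $v\in J^\infty_{x_\infty}$ realized by $e^{i\theta}$, the maps $t\mapsto B(x_k,t\,J^\infty(e^{i\theta})(x_k))$ are linear, so by condition~(2) their pointwise limit $t\mapsto B(x_\infty,tv)$ is measurable; being additive it is linear by the Banach--Sierpinski theorem, whence $x_\infty\notin A$.

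The next step is the one genuinely new feature, and it is short. Since $e^{i\cdot 2\pi}\equiv 1$, the constant jet of value $2\pi$ is the identity element of $J^\infty_x(M\times S^1)$, so the homomorphism property gives $B$ of it equal to $0$; fiberwise linearity upgrades this to $B(x,\,t\text{-valued constant jet})=0$ for all $t$. Hence $B(x,\cdot)$ annihilates all constant jets and may be written $B(x,v)=\langle c(x),v\rangle$ with $c(x)\in(J^\infty_x)^*$ vanishing on constant jets, exactly as claimed.

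Finally I would transport Lemma~\ref{lemma6.2}(2) without change. Setting $F_n=\{x:c(x)\in Image(Q_n)_x\}$ and $C=\bigcap_n(M\setminus F_n)$, I would show $C$ has no accumulation point by the same argument using functions $g_k=e^{i\psi_k}$ built from directions $v_k\in Ker(P_k)$ --- these have vanishing $0$-jet, so the $2\pi$-ambiguity never intervenes --- deduce $C=\emptyset$, verify that each $F_n$ is closed (the coefficients $a_{\alpha,\beta}$ extend continuously across accumulation points, forcing membership in $F_n$ to pass to the limit), and conclude that on any compact $K$ one has $K\subset F_n$ for large $n$ with $c=Q_n(c_n)$, $c_n$ a continuous section of $(J^n)^*$ over $K$. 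I expect the only real bookkeeping to be checking that prescribed jets are realized by $S^1$-valued functions and that condition~(2) passes through the exponential; the periodicity argument forcing vanishing on constant jets is the sole place where the geometry of $S^1$ enters essentially, and it is immediate.
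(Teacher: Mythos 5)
Your proposal is correct and follows exactly the route the paper intends: the paper omits the proof of Lemma~\ref{lemSR}, saying only that the Appendix~A lemmas adapt to the $S^1$ case (citing \cite{AAFM}), and your adaptation via $J^\infty(e^{i\theta})=J^\infty\theta$ and exponentiation of the test functions from~\ref{subsec:functions} is precisely that adaptation. You also correctly isolate and dispose of the one genuinely new point --- that well-definedness on the quotient by the $2\pi\Z$ constant jets, combined with fiberwise linearity, forces $c(x)$ to vanish on all constant jets.
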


\begin{lem}\label{lemSS} Fix a $C^\infty$ manifold $M$, and $E=M\times S^1$.
Assume $B:J^\infty (E) \to S^1$ satisfies the following
conditions:
\\ (1) $B(x,uv)=B(x,u)B(x,v)$ for all $x\in M$, $u,v\in J^\infty_x$.
\\ (2) For all $f\in C_c^\infty(M, S^1)$, one has $B(J^\infty f)\in C(M, S^1)$.
\\ Then $B(x,v)=\exp(i\langle c(x), v\rangle)$ where $c\in J^*$. Moreover, $M=\bigcup_{n=0}^\infty F_n$ where $F_n=\{x:
c(x)\in Image(Q_n)_x\}$. The sets $F_n\subset F_{n+1}$ are
closed, and for all $K\subset M$ compact, one has $K\subset F_n$ for
some $n$, and $B$ is a continuous section of $(J^n)^*$ over $K$. Also, $c(x)(v)= mv$ for constant jets $v$, with  $m\in\mathbb Z$.
\end{lem}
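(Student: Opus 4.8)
The plan is to adapt, essentially verbatim, the proof of the first (unnamed) lemma of this subsection together with Lemma \ref{lemma6.2}, carrying the target group from $\R$ to $S^1$. First I would isolate the behaviour on constant jets: restricting $B(x,\cdot)$ to the subgroup of constant jets gives a group homomorphism $S^1\to S^1$, and condition (2) applied to constant $S^1$-valued sections shows it is continuous; hence it is of the form $z\mapsto z^{m(x)}$ with $m(x)\in\Z$, and continuity forces $m(x)$ to be locally constant. This yields the final assertion that $c(x)(v)=mv$ on constant jets with $m\in\Z$.

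The core of the argument is to show that $B(x,\cdot)$ is a linear character on each fibre, i.e. of the form $\exp(i\langle c(x),\cdot\rangle)$. I would follow the two-stage scheme of the first lemma. Let $A$ be the set of $x$ at which $B(x,\cdot)$ fails to be such a character, equivalently at which some one-parameter subgroup $t\mapsto B(x,tv)$ (for $v$ in the non-constant part of the fibre) is a non-linear, hence discontinuous, additive map $\R\to S^1$. The one genuinely new point, replacing the statement ``non-linear additive functions are not locally bounded'' used in the real case, is the following: a discontinuous additive homomorphism $\R\to S^1$ is discontinuous at $0$, so there is a sequence $t_k\to 0$ on which its values stay at distance $\geq\delta$ from $1\in S^1$ for some fixed $\delta>0$. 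With this substitute in hand the accumulation-point argument transfers directly: assuming $A\ni x_k\to x_\infty$ with the $x_k$ distinct, pick $v_k$ witnessing non-character behaviour, choose $t_k\to 0$ with $B(x_k,t_k\epsilon_kv_k)$ at distance $\geq\delta$ from $1$, and use the construction of Subsection \ref{subsec:functions} (with $S^1$ target) to produce $f$ with $J^\infty f(x_k)=t_k\epsilon_kv_k$ and $J^\infty f(x_\infty)=0$. Condition (2) then forces $B(x_k,J^\infty f(x_k))\to B(x_\infty,0)=1$, contradicting that these values are bounded away from $1$. Hence $A$ has no accumulation points. To see $A=\emptyset$, take $x_k\notin A$ with $x_k\to x_\infty$; by condition (2) the map $t\mapsto B(x_\infty,tv)$ is a pointwise limit of the continuous maps $t\mapsto B(x_k,tJ^\infty f(x_k))$ for a fixed $f$ with $J^\infty f(x_\infty)=v$, hence measurable, and by the Banach--Sierpinski theorem (\cite{Banach,Sier}) a measurable additive map $\R\to S^1$ is continuous, so $x_\infty\notin A$.

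Once $B(x,v)=\exp(i\langle c(x),v\rangle)$ is established, the local finite-dimensionality and continuity of $c$ --- the statement about the closed exhaustion $F_n=\{x:c(x)\in\operatorname{Image}(Q_n)_x\}$ and that $K\subset F_n$ for compact $K$ --- is proved exactly as in part (2) of Lemma \ref{lemma6.2}: one shows that the set $C$ where $c(x)$ lies in no $\operatorname{Image}(Q_n)$ has no accumulation points by the same $g_k$-construction with $S^1$ target, then that each $F_n$ is closed by extending the coefficient functions continuously, and finally that $C=\emptyset$ together with the compactness statement by the nested-closed-set argument of Steps 3--4 there.

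I expect the main obstacle to be precisely the replacement noted above: in the real case the contradiction is driven by unboundedness of non-linear additive functionals, which is unavailable when the target is the compact group $S^1$. The correct substitute is discontinuity at the origin, and one must take care to select the witnessing sequence $t_k\to 0$ so that the $S^1$-values remain a fixed distance from the identity; this is what lets the continuity hypothesis (2) produce a contradiction. Everything else --- the use of the Banach--Sierpinski measurability theorem, the Borel-lemma-style construction of the functions $f_k,g_k$, and the $F_n$ bookkeeping --- carries over unchanged from the real and $\R\to S^1$ cases, so no further new ideas should be required.
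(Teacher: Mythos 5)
The paper states Lemma \ref{lemSS} without proof, deferring the adaptation of the real-valued arguments to \cite{AAFM}, so your plan --- rerunning the first lemma of Appendix A and Lemma \ref{lemma6.2} with target $S^1$ --- is exactly the intended route, and most of it transfers as you say. There is, however, one genuine gap, and it sits at the very point you single out as the ``one genuinely new point.'' Your substitute for unboundedness is: a discontinuous homomorphism $\phi:\R\to S^1$ is discontinuous at $0$, hence admits $t_k\to 0$ with $d(\phi(t_k),1)\geq\delta$ for \emph{some} $\delta>0$. That is true for each single $\phi$, but in the accumulation-point argument you apply it to a \emph{sequence} of distinct homomorphisms $\phi_k(t)=B(x_k,t\epsilon_k v_k)$, one per $k$, and the contradiction with $B(x_k,J^\infty f(x_k))\to B(x_\infty,0)=1$ requires the chosen values to stay \emph{uniformly} away from $1$. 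Discontinuity at $0$ only furnishes a $\delta_k$ depending on $k$, and nothing in your argument rules out $\delta_k\to 0$, in which case there is no contradiction. The repair is to use the stronger standard fact that a discontinuous homomorphism $\R\to S^1$ maps every neighbourhood of $0$ onto a dense subset of $S^1$ (for instance: $H=\bigcap_{\varepsilon>0}\overline{\phi((-\varepsilon,\varepsilon))}$ is a closed subgroup of $S^1$; if $H$ were finite of order $n$ then $\phi^n$ would be continuous at $0$, hence continuous, forcing $\phi$ itself to be a continuous character). Density lets you pick $t_k\in(0,1/k)$ with $\phi_k(t_k)$ near $-1$, giving a uniform separation from $1$ and restoring the contradiction.

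Two smaller points. First, the Banach--Sierpinski theorem as cited concerns real-valued additive functions; for the step ``a measurable additive map $\R\to S^1$ is continuous'' you need the circle-valued variant (a Lebesgue-measurable character of $\R$ is continuous), which is classical and easy (convolve against an indicator), but is not literally the cited statement. Second, in transplanting Step 1 of Lemma \ref{lemma6.2}(2), the normalization $B(x_k,v_k)=1$ of the real case must become something like $B(x_k,v_k)=-1$ (achievable by rescaling $v_k\in\mathrm{Ker}(P_k)$ once $B(x,\cdot)=\exp(i\langle c(x),\cdot\rangle)$ is known), since in $S^1$ the identity element is $1=B(x_\infty,0)$; your claim that this bookkeeping carries over ``unchanged'' glosses over this, though it is trivial to fix. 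Your treatment of constant jets is fine in substance, but note that continuity of $z\mapsto B(x_0,z)$ on constant jets comes from the character form established by the main argument (condition (2) by itself only gives continuity in $x$), after which $m\in\Z$ follows because the character must factor through $\R/2\pi\Z$.
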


\begin{rem} One also has the $C^s$ simpler versions of those lemmas, with
$s<\infty$.\end{rem}

\subsection{A consequence of Peetre's Theorem}
In the following, $M$ is a real smooth manifold, and $E$, $F$ are
smooth vector bundles over $M$. First, recall Peetre's Theorem
\cite{peetre}:

\begin{thm*}\label{thm:peetre}
Let $Q:\Gamma^\infty(E)\to\Gamma^\infty(F)$ be a linear operator
which decreases support: $\supp(Qs)\subset \supp(s)$. Then for every
$x\in M$ there is an integer $k\geq 0$ and an open neighborhood
$U\ni x$ s.t. $P$ restricts to a differential operator of order $k$
on $\Gamma^\infty(U, E)$.
\end{thm*}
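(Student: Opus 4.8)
Peetre's theorem is classical, so I would treat the quoted statement as a result to cite from \cite{peetre}; nonetheless, were I to prove it, the plan would proceed in three stages: germ-locality, local finiteness and boundedness of the differential order, and continuity of the resulting coefficients. The first stage is immediate from the hypothesis. If $s,s'\in\Gamma^\infty(E)$ agree on an open set $V$, then $s-s'$ vanishes on $V$, so $\supp(s-s')\subset M\setminus V$, and hence $\supp(Q(s-s'))\subset M\setminus V$ by support-decrease; linearity then gives $Qs=Qs'$ on $V$. Thus $(Qs)(x)$ depends only on the germ $s_x$, and $Q$ carries germs at $x$ to germs at $x$ (compare Step~2 of the proof of Proposition~\ref{prop-dima}).

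The decisive second stage is to show that, near each point, $(Qs)(x)$ in fact depends only on a jet of some fixed finite order. I would introduce, exactly as in the appendix, the nested sets $F_n=\{x\in M:\ (Qs)(x)\text{ factors through }J^n_x s\}$ and try to prove that they are closed, exhaust $M$, and that every compact set lies in a single $F_n$. The natural attack mirrors Lemma~\ref{lemma6.2}: assuming the order is not locally bounded, one picks $x_k\to x_\infty$ with $x_k\notin F_k$ together with witness sections whose $k$-jet at $x_k$ vanishes but whose image at $x_k$ is nonzero, and then splices them into a single global $s=\sum_k s_k$ via the Borel-type construction of Section~\ref{subsec:functions}: disjoint shrinking supports about the $x_k$, prescribed jets, and $|J^k s_k|<2^{-(k-1)}$, so that the sum is genuinely $C^\infty$ and flat at $x_\infty$. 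Continuity of $Qs\in\Gamma^\infty(F)$ would then force $(Qs)(x_k)\to(Qs)(x_\infty)$, against the normalization $\|(Qs_k)(x_k)\|=1$.

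The hard part --- and the reason Peetre's theorem is a theorem rather than a corollary of Lemma~\ref{lemma6.2} --- is precisely that, a priori, $(Qs)(x_k)$ depends on the full germ and not merely on the finite jet we control. In Lemma~\ref{lemma6.2} the operator is given from the outset as a function of the jet, so replacing a witness by a jet-realizer with the same infinite jet is harmless and flatness at $x_\infty$ immediately yields the value $0$ there; here neither step is free. Overcoming this is the genuine content of Peetre's argument, and I would expect to need his quantitative rescaling device, controlling $(Qs)(x)$ through the size of the vanishing jet on a shrinking ball, rather than the bare gluing lemma; this is the step I would budget the most effort for.

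Once local finite order is in hand, the third stage is routine. On a chart $U$ with $K\subset F_n$, germ-locality and finiteness give $(Qs)(y)=\sum_{|\alpha|\le n}a_\alpha(y)\,\partial^\alpha s(y)$ for each $y\in U$; testing against polynomial sections and invoking the continuity argument exactly as in part~(1) of the proof of Lemma~\ref{lemma6.2} shows the coefficients $a_\alpha$ are continuous, indeed $C^\infty$. Hence $Q$ restricts on $U$ to a differential operator of order $\le n$, as claimed.
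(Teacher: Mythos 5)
Your instinct to cite this as a classical result is exactly what the paper does: the theorem is stated in Appendix A.5 only as a recall of Peetre's theorem with a reference to \cite{peetre}, and no proof is given in the paper. Your supplementary sketch correctly identifies why the result is not a corollary of Lemma \ref{lemma6.2} (germ-locality is weaker than jet-dependence) and honestly flags that the passage from germ-locality to locally bounded finite order is the genuine content you have not supplied; that gap is real, but since the paper itself offers no proof to compare against, the citation is the appropriate resolution.
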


\begin{lem}\label{peetre}
Let $P:\Gamma^\infty(E)\to\Gamma^\infty(E)$ be an invertible
differential operator, which for all $x\in M$ induces an isomorphism
on the stalk $\mathcal O_x(E)$. Then $P$ is of order $0$.
\end{lem}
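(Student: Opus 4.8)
The plan is to argue locally and reduce everything to a statement about the order of a differential operator at a single point. Fix $x \in M$. By Peetre's theorem, on some neighborhood $U \ni x$ the operator $P$ agrees with a differential operator of finite order, say order $m$; I want to show $m = 0$ at every point, since this forces $P$ to be a bundle endomorphism (a $C^\infty$ section of $\mathrm{End}(E)$, acting fibrewise), and an order-$0$ operator everywhere is exactly what ``of order $0$'' means. The key hypothesis to exploit is that $P$ induces an isomorphism on each stalk $\mathcal{O}_x(E)$, i.e.\ on germs of smooth sections at $x$. I would like to contrast this with what happens if $m \ge 1$.

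Here is the mechanism I would use. Suppose for contradiction that at some point $x_0$ the local order $m$ is $\ge 1$. An order-$m$ differential operator sends the filtration by vanishing order into itself with a shift: writing $\mathfrak{m}_{x_0}$ for the maximal ideal of germs vanishing at $x_0$, a genuine order-$m$ operator does \emph{not} preserve the ideal $\mathfrak{m}_{x_0}^{j}$ for small $j$ (it can lower vanishing order by up to $m$). The isomorphism-on-stalks hypothesis, however, is very rigid: an isomorphism of the local ring-module $\mathcal{O}_{x_0}(E)$ must respect the $\mathfrak{m}_{x_0}$-adic filtration, since that filtration is intrinsic to the local ring structure. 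Concretely, I would look at the induced map on the associated graded pieces $\mathfrak{m}_{x_0}^{j}\mathcal{O}_{x_0}(E)/\mathfrak{m}_{x_0}^{j+1}\mathcal{O}_{x_0}(E) \cong J^{j}_{x_0}/J^{j-1}_{x_0}$, i.e.\ on jets. An isomorphism of stalks induces isomorphisms on each graded piece; but the top-order symbol of an order-$m$ operator with $m\ge1$ produces a nonzero map that drops filtration degree, which is incompatible with preserving the grading unless the symbol vanishes. Pushing this through forces the principal symbol to be zero, contradicting that $P$ has order exactly $m \ge 1$ there. Hence $m = 0$ at $x_0$.

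Once I know $P$ is order $0$ at every point, $P$ is given by a smooth fibre map $x \mapsto P(x) \in \mathrm{End}(E_x)$, and the stalk-isomorphism hypothesis says each $P(x)$ is invertible; this is automatically consistent and completes the statement. The main obstacle I anticipate is making the filtration/symbol argument clean: I need to verify carefully that an isomorphism of the stalk (as a module over the local ring $\mathcal{O}_{x_0}$) genuinely preserves the $\mathfrak{m}_{x_0}$-adic filtration — this uses that $\mathfrak{m}_{x_0}$ is the unique maximal ideal and that module isomorphisms over a local ring are filtration-preserving — and then that an order-$m$ operator with $m \ge 1$ cannot be filtration-preserving because its principal symbol shifts degree. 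An alternative, perhaps more elementary, route avoiding the graded machinery is to test $P$ directly on explicit germs: apply $P$ to a germ vanishing to high order at $x_0$ and observe that if $m\ge 1$ the image can fail to vanish at $x_0$, so that the induced map on the quotient $\mathcal{O}_{x_0}(E)/\mathfrak{m}_{x_0}\mathcal{O}_{x_0}(E) = E_{x_0}$ is not well-defined as an isomorphism — contradicting that $P$ descends to a stalk isomorphism compatible with evaluation. Either way, the crux is the tension between ``raises/lowers vanishing order'' (order $\ge 1$) and ``stalk isomorphism'' (filtration-preserving), and I would pick whichever formulation keeps the bookkeeping shortest.
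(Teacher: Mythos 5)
Your proposal has a genuine gap, and the purely pointwise mechanism you describe cannot be made to work. The crux of your argument is that the stalk isomorphism must preserve the $\mathfrak{m}_{x_0}$-adic filtration because ``module isomorphisms over a local ring are filtration-preserving''; but the map that $P$ induces on $\mathcal O_{x_0}(E)$ is only $\R$-linear, not $\mathcal O_{x_0}$-linear --- a differential operator of order $\ge 1$ does not commute with multiplication by functions, and $\mathcal O$-linearity of a local operator is already equivalent to having order $0$, so invoking it is circular. The second half of the mechanism fails too: an operator of order exactly $m\ge 1$ near $x_0$ can have principal symbol vanishing \emph{at} $x_0$, preserve the filtration at $x_0$, and induce an isomorphism of the stalk there. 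Concretely, $P=x\frac{d}{dx}+1$ on $\R$ has order $1$, sends $x^k\mapsto (k+1)x^k$ (so it preserves every $\mathfrak m_0^j$), and is a bijection of the stalk at $0$: $(xf)'=g$ is solved by the smooth germ $f(x)=\int_0^1 g(sx)\,ds$, and injectivity follows since $xf=c$ forces $c=0$. Hence no contradiction can be extracted at a single point from ``order $\ge 1$'' versus ``stalk isomorphism''; the hypothesis must be used at \emph{all} points (this same $P$ fails to be a stalk isomorphism at every $x\neq 0$, where its local kernel contains $c/x$) together with global invertibility. Your ``more elementary route'' has the same defect: nothing in the hypothesis says $P$ descends to the fibre $E_{x_0}$, i.e.\ is compatible with evaluation, so no contradiction with a well-defined map on $\mathcal O_{x_0}(E)/\mathfrak m_{x_0}\mathcal O_{x_0}(E)$ arises.

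The paper's proof runs through the inverse instead, and that is the ingredient you are missing. Since $P$ is a stalk isomorphism at every $x$, one has $(Pg)_x=0$ if and only if $g_x=0$; therefore $Q=P^{-1}$ is a local, support-non-increasing operator, so Peetre's theorem applies to $Q$ and makes it locally a differential operator. One then concludes from $PQ=QP=\mathrm{Id}$ that two mutually inverse differential operators must both have order $0$ (their principal symbols multiply). If you want to repair your argument, replace the filtration count at one point by this global use of the stalk hypothesis via the inverse operator.
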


\noindent{\bf Proof.} Denote $Q=P^{-1}$. We claim that $Q$ is a
local operator (i.e. $Qf(x)$ only depends on the germ $f_x$ of
$f\in\Gamma^\infty(E)$). Indeed, assume $f_x=0$ and take $g=Qf$.
Then $(Pg)_x=f_x=0$, implying $g_x=0$, as required. In particular,
$Q$ does not increase supports: $\supp(Qf)\subset \supp(f)$. By
Peetre's theorem, $Q$ is locally a differential operator. Thus in
small neighborhoods $U\subset M$, $P$ and $Q$ are two differential
operators that are inverse to each other, and we conclude they have
order $0$. $\hfill \square$

\section{Bijections of open sets preserving intersection}

In this appendix we give another lemma in the same spirit, which may
be of use in other settings. Denote by $U(\R)$ the open subsets of
$\R$.

\begin{lem}
Assume $F: U(\R) \to U(\R)$ is a bijection which satisfies
\[ F(U_1 \cap U_2) = F(U_1) \cap F(U_2). \]
Then $F$ is given by an open point map $u: \R\to \R$, \[ F(U) = \{
u(x): x \in U\}. \] \end{lem}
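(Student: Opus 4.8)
The plan is to forget the topology as much as possible and work purely with the lattice structure of $(U(\R),\subseteq)$. First I would observe that $F$ is an order isomorphism. Indeed, if $U_1\subseteq U_2$ then $U_1\cap U_2=U_1$, so $F(U_1)=F(U_1\cap U_2)=F(U_1)\cap F(U_2)$, giving $F(U_1)\subseteq F(U_2)$; conversely, if $F(U_1)\subseteq F(U_2)$ then $F(U_1\cap U_2)=F(U_1)\cap F(U_2)=F(U_1)$, and injectivity of $F$ forces $U_1\cap U_2=U_1$, i.e. $U_1\subseteq U_2$. Thus $F$ preserves and reflects inclusion, and being a bijection it is an order isomorphism; in particular it maps any order-theoretically defined class of open sets bijectively onto itself.

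The crucial point is to recover the points of $\R$ from this order structure. I claim the \emph{co-atoms} of $(U(\R),\subseteq)$ --- the open sets $W\subsetneq\R$ admitting no open set strictly between $W$ and $\R$ --- are exactly the sets $\R\setminus\{x\}$, $x\in\R$. On one hand $\R\setminus\{x\}$ is open (points are closed in $\R$) and any open set strictly containing it must contain $x$, hence equals $\R$; so it is a co-atom. On the other hand, if $W$ is a co-atom and $x\in\R\setminus W$, then $W\subseteq\R\setminus\{x\}\subsetneq\R$, and maximality of $W$ forces $W=\R\setminus\{x\}$. Since $x\mapsto\R\setminus\{x\}$ is a bijection between $\R$ and the co-atoms, and $F$ permutes the co-atoms, there is a unique bijection $u:\R\to\R$ determined by $F(\R\setminus\{x\})=\R\setminus\{u(x)\}$.

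It remains to check that $F(U)=u(U)$ for every open $U$. For a point $x$ and an open set $U$ I would compute, using that $F$ is an order isomorphism and $\R\setminus\{u(x)\}=F(\R\setminus\{x\})$,
\[
u(x)\notin F(U)\iff F(U)\subseteq\R\setminus\{u(x)\}\iff U\subseteq\R\setminus\{x\}\iff x\notin U,
\]
so that $u(x)\in F(U)\iff x\in U$. Since $u$ is a bijection, every point of $\R$ is $u(x)$ for a unique $x$, and this equivalence says precisely that $F(U)=\{u(x):x\in U\}=u(U)$. Finally, as $F(U)$ lies in $U(\R)$ by hypothesis, $u(U)$ is open for every open $U$, i.e. $u$ is an open map, which is the last assertion. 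I expect the only real obstacle to be the conceptual step of isolating the points of $\R$ as the co-atoms of the open-set lattice; once intersection-preservation is upgraded to an order isomorphism, everything else is a short lattice computation, and no appeal to infinite meets or to continuity of $u$ is needed.
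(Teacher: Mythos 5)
Your proof is correct, and while it shares the paper's overall architecture (upgrade intersection-preservation to an order isomorphism, identify the sets $\R\setminus\{x\}$ as an $F$-invariant class, read off a point map $u$, then conclude), the key step is carried out differently. The paper first proves that $F$ also preserves finite unions and then singles out the sets $A_x=\R\setminus\{x\}$ by a counting argument on unions: $A_x\cup B\in\{A_x,\R\}$ for every open $B$, whereas any other proper open set omits at least two points and so produces at least three distinct unions. You instead characterize the $A_x$ as the co-atoms of the lattice $(U(\R),\subseteq)$, i.e.\ the proper open sets with nothing strictly between them and the top element $\R$; since an order isomorphism fixes the top and permutes co-atoms, this makes the union-preservation lemma unnecessary. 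Your final step is also slightly tighter: the equivalence $u(x)\in F(U)\iff x\in U$ gives both inclusions at once, where the paper proves $F(A)\subseteq u(A)$ and then runs the same argument for $F^{-1}$ and $u^{-1}$ to get the reverse. What your route buys is economy and generality --- the co-atom characterization only uses that singletons are closed, so the argument transfers verbatim to any $T_1$ space, in line with the paper's closing remark that the setting of $\R$ can be vastly generalized; what the paper's route buys is the explicit observation that $F$ preserves unions, which is of some independent interest but not needed for the conclusion.
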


\noindent{\bf Proof.} First we claim that $F$ is an order
isomorphism. Indeed, (using injectivity)
\[ U_1 \subset U_2
\Leftrightarrow U_1 \cap U_2 = U_1 \Leftrightarrow F(U_1) \cap
F(U_2) = F(U_1)\Leftrightarrow F(U_1) \subset F(U_2). \] Next, note
that this implies that \[ F(U_1 \cup U_2) = F(U_1) \cup F(U_2). \]
Indeed, denote (using surjectivity) $F(U_3)  = F(U_1) \cup F(U_2)$,
then $F(U_1) \subset F(U_1 \cup U_2)$ and $F(U_2) \subset F(U_1 \cup
U_2)$ and thus $F(U_3) \subset F(U_1 \cup U_2)$ and so $U_3\subset
U_1 \cup U_2$. On the other hand $U_1 \subset U_3$ and $U_2 \subset
U_3$, so that $U_1 \cup U_2 \subset U_3$. We get equality, thus $F$
preserves union too.

Next we claim that the set of special open sets $\{ A_x : x\in \R\}$
given by $A_x = \{ y \in \R : y\neq x\}$ is invariant under $F$.
Indeed, start by noticing that $F(\R) = \R$, as it satisfies that
$\R \cap A = A$ for every $A$. (Also that $\R \cup B  = \R$ for
every $B$) and these properties are preserved under $F$ and unique
to this subset.

Next, notice that a set $A_x$ satisfies $A_x \cup B = A_x$ or $\R$
for every $B$. Whereas every other open set $A$ has at least two
points which are not elements in it, and thus one may construct at
least three different sets as combinations $A \cup B$ for various
open $B$'s. Therefore, $F(A_x) = A_{u(x)}$. Notice that $F^{-1}$
satisfies the same conditions as $F$ and thus $u$ is a bijection.

Finally, for a set $A$ we know that for any $x \not\in A$
\[ F(A) = F(A \cap A_x) = F(A) \cap A_{u(x)}\]
and thus $u(x) \not\in F(A)$, so that $F(A) \subset u(A)$. However,
using the same argument for $u^{-1}$ we get that $F^{-1}(A) \subset
u^{-1}(A)$, and applying this to the set $F(A)$ we get $A \subset
u^{-1}(F(A))$ which means $u(A) \subset F(A)$ and the proof is
complete. $\hfill \square$

\begin{rem} Clearly $u$ is a continuous bijection.
It is also clear that the setting of $\R$ can be vastly generalized.  \end{rem}


\begin{thebibliography}{}
\bibitem{AAFM} Alesker, S., Artstein-Avidan, S., Faifman, D.
and Milman, V., {\em  A characterization of product preserving maps
with applications to a characterization of the Fourier transform.}
To appear in Illinois Journal of Mathematics.


\bibitem{AAM} Alesker, S., Artstein-Avidan, S.  and  Milman, V. D. {\em A
characterization of the Fourier transform and related topics}. Amer.
Math. Soc. Transl. (2) Vol. 226, a special volume in honor of Prof.
V. Havin, (2009) 11--26.



\bibitem{AKM} Artstein-Avidan, S., K\"onig, H.
and Milman, V., {\em The chain rule as a functional equation.}
Journal of Functional Analysis (2010) Vol 259 pp 2999--3024.


\bibitem{Banach} Banach, S., {\em Sur l'\'{e}quation fonctionelle $f(x+y)=f(x)+f(y)$},
Fundamenta Math. 1 (1920), 123-124.


\bibitem{GS}
Gel'fand, I. M. and Shilov, G. E. (1964) [1958], {\em Generalized
functions. Vol. 1. Properties and Operations}, New York, Academic
Press. ISBN-0-12-279501-6


\bibitem{KM}  K\"onig, H.
and Milman, V., {\em  Characterizing the derivative and the entropy
function by the Leibniz rule}, with appendix by D. Faifman.  Journal
of Functional Analysis Volume 261, Issue 5, Pages 1325--1344



\bibitem{KM2}  K\"onig, H.
and Milman, V., {\em A functional equation characterizing the second
derivative} Journal of Functional Analysis Volume 261, Issue 4,
Pages 876--896




\bibitem{KM3}  K\"onig, H.
and Milman, V.,  {\em The chain rule functional equation on $\R^n$}
Journal of Functional Analysis Volume 261, Issue 4,  Pages 861--875



\bibitem{Milmgram}
Milgram, A. N. {\em Multiplicative semigroups of continuous
functions.} Duke Math. J. 16, 1949, 377-�383.


\bibitem{Mrcun} Mrcun, J., {\em On isomorphisms of algebras of smooth functions}, Proc.
Amer. Math. Soc. 133 (2005), 3109�-3113.

\bibitem{semrl-mrc}  Mrcun, J. and Semrl, P. {\em Multiplicative bijections between
algebras of differentiable functions} Annales Academae Scientiarum
Fennicae Mathematica Vol. 32, 2007, 471�-480.


\bibitem{peetre} Peetre, J., {\em Une caract�risation abstraite des op�rateurs diff�rentiels}, Math. Scand. 7 (1959), 211--218.

\bibitem{sancheztimes2} S\'{a}nchez, F. C. and S\'{a}nchez, J. C. {\em
Some preserver problems on algebras of smooth functions. } Ark.
Mat., 48 (2010), 289�-300

\bibitem{Saunders} Saunders, D. J., {\em The Geometry of Jet Bundles}, Cambridge University
Press, 1989, ISBN 0-521-36948-7

\bibitem{Sier} Sierpinski, W., Sur l'\'{e}quation fonctionelle $f(x+y)=f(x)+f(y)$,
Fundamenta Math. 1 (1920), 116-122.


\end{thebibliography}
\end{document}